\numberwithin{equation}{section}
\newtheorem{theorem}{Theorem}[section]
\newtheorem{corollary}[theorem]{Corollary}
\newtheorem{lemma}[theorem]{Lemma}
\newtheorem{proposition}[theorem]{Proposition}
\theoremstyle{remark}
\newtheorem{remark}[theorem]{Remark}
\theoremstyle{definition}
\newtheorem*{main-definition}{Main Definition}
\begin{document}

\title[Elliptic problems with rough boundary data]{Elliptic problems with rough boundary data\\in generalized Sobolev spaces}


\author[A. Anop]{Anna Anop}

\address{Institute of Mathematics, National Academy of Sciences of Ukraine,
3 Tereshchenkivs'ka, Kyiv, 01004, Ukraine}

\email{anop@imath.kiev.ua}


\author[R. Denk]{Robert Denk}

\address{University of Konstanz, Department of Mathematics and Statistics, 78457  Konstanz,   Germany}

\email{robert.denk@uni-konstanz.de}


\author[A. Murach]{Aleksandr Murach}

\address{Institute of Mathematics, National Academy of Sciences of Ukraine,
3 Tereshchenkivs'ka, Kyiv, 01004, Ukraine}

\email{murach@imath.kiev.ua}

\subjclass[2010]{35J40, 35R60, 46E35, 60H40}

\keywords{Elliptic boundary value problem, generalized Sobolev space, rough boundary data, Fredholm property, a priory estimate of solution, boundary white noise}


\thanks{The publication contains the results of studies conducted by the joint grant F81 of the National Research Fund of Ukraine and the German Research Society (DFG); competitive project F81/41686.}

\thanks{This work was supported by the Grant H2020-MSCA-RISE-2019, project number  873071 (SOMPATY: Spectral Optimization: From Mathematics to Physics and Advanced Technology).}

\thanks{The first author was supported by President of Ukraine's grant for competitive project F82/45932.}

\begin{abstract}
We investigate regular elliptic boundary-value problems in bounded domains and show the Fredholm property for the related operators in an extended scale formed by inner product Sobolev spaces (of arbitrary real orders) and corresponding interpolation Hilbert spaces. In particular, we can deal with boundary data with arbitrary low regularity. In addition, we show interpolation properties for the extended scale, embedding results, and global and local \textit{a priori} estimates for solutions to the problems under investigation. The results are applied to elliptic problems with homogeneous right-hand side and to elliptic problems with rough boundary data in Nikoskii spaces, which allows us to treat some cases of white noise on the boundary.
\end{abstract}

\maketitle

\section{Introduction}\label{sec1}

In this paper, we investigate elliptic boundary-value problems of the form
\begin{equation*}
A u =f\;\;\text{ in }\Omega,\quad
    B_j u = g_j \;\;\text{ on }\Gamma, \;\; j=1,\dots,q,
\end{equation*}
in classes of generalized Sobolev spaces. Here, $\Omega\subset\mathbb R^n$ is a bounded domain with boundary $\Gamma\in C^{\infty}$, $A$ is a linear partial differential operator (PDO) of order $2q$, and $B_j(x,D)$, $\nobreak{j=1,\dots,q}$, are linear boundary PDOs of order $m_j<2q$. We assume all coefficients to be infinitely smooth and the boundary-value problem $(A,B):=(A,B_1,\dots,B_q)$ to be regular elliptic. The aim of the present paper is the analysis of this problem in the so-called extended Sobolev scale of Hilbert distribution spaces. They are of the form $H^\alpha(\Omega)$, where $\alpha\in\textrm{OR}$ is an O-regularly varying function (see, e.g., \cite[Section~2.0.2]{BinghamGoldieTeugels89}). Note that the smoothness parameter $\alpha$ is a function, in contrast to the classical Sobolev spaces, where the smoothness is measured by some real number. The Hilbert spaces $H^\alpha(\Omega)$ are special cases of distribution spaces introduced by H\"ormander \cite{Hermander63, Hermander83} for a wide class of weight functions and based on the $L_p$-norm. In the situation considered here, the weight function is radially symmetric, and we restrict ourselves to the Hilbert space case of $p=2$. We remark that for $p=2$ the H\"ormander spaces coincide with the spaces introduced by Volevich and Paneah in \cite[Section~2]{VolevichPaneah65}.  The class $\{H^\alpha(\Omega):\alpha\in\textrm{OR}\}$ contains the classical Sobolev spaces $H^{r}(\Omega)$ with $r\in\mathbb{R}$ and can be seen as a finer scale of regularity, which allows for more precise embedding and trace theorems. On the other hand, the space $H^\alpha(\Omega)$ can be obtained from the classical Sobolev spaces by interpolation with a function parameter, see Section~\ref{sec5} below.

Recently, Mikhailets and Murach developed a general theory of solvability of elliptic boundary-value problems in a class of H\"ormander Hilbert spaces called the refined Sobolev scale (see \cite{MikhailetsMurach05UMJ5, MikhailetsMurach06UMJ3, MikhailetsMurach06UMJ11, MikhailetsMurach07UMJ5}, and the monograph \cite{MikhailetsMurach14}). The (larger) extended Sobolev scale was considered in \cite{AnopMurach14UMJ}. In these publications, the boundary data had sufficient regularity to guarantee the existence of boundary traces. More precisely, if $\alpha(t)\equiv\varphi(t)t^{2q}$, then the lower Matuszewska index of $\varphi$ was assumed to be larger than $-1/2$ (see Section~\ref{sec3} and Proposition~\ref{prop1} below for details). Motivated by applications with rough boundary data, in this paper we consider the situation where this condition on the
Matuszewska index does not hold. Even for Sobolev spaces, the case of rough boundary data is quite sophisticated. One approach is the modification of the Sobolev spaces with low regularity as developed by Roitberg \cite{Roitberg64,Roitberg96,Roitberg99}. Another way to treat this problem is to include the norm of $Au$ in the norm of the Sobolev space, see Lions and Magenes \cite[Chapter~2, Section~6]{LionsMagenes72}. In  connection with negative order boundary spaces, we also refer to \cite{GesztesyMitrea08} for recent results on weak and very weak traces and to \cite[Chapter~5]{BehrndtHassideSnoo20} for the theory of boundary triplets.

This paper has the following structure: Section~\ref{sec2} contains the precise formulation of the boundary-value problem $(A,B)$; in Section~\ref{sec3} we introduce the extended Sobolev scales over $\mathbb R^n$, $G$, and $\Gamma$. The main results are formulated in Section~\ref{sec4}. We show here that $(A,B)$ induces a Fredholm operator in the extended Sobolev scale (Theorem~\ref{th1}). We obtain global and local (up to the boundary) elliptic regularity in the extended scale (see Theorems~\ref{th4.6} and~\ref{th4.7}, resp.) and elliptic \textit{a priori} estimates (see Theorem~\ref{th4.12} for the global and Theorem~\ref{th4.13} for the local version). Theorems \ref{th4.7} and \ref{th4.13} are new even in the case of Sobolev spaces. In Section~\ref{sec5}, we discuss interpolation properties of the extended Sobolev scale, which will also be used in the proof of the main results in Section~\ref{sec6}. In Section~\ref{sec7}, we study semi-homogeneous boundary value problems, namely the case of $f=0$. Defining the space $H^\alpha_A(\Omega):=\{ u\in H^\alpha(\Omega): Au =0\}$, we obtain, e.g.,  conditions for uniform convergence of sequences of solutions to the homogeneous elliptic equation (Theorems \ref{th7.5} and \ref{th7.6}) and interpolation properties for $H^\alpha_A(\Omega)$ (Theorems \ref{th7.8} and \ref{th7.9}). Finally, in Section~\ref{sec8} we apply the results to elliptic boundary-value problems whose boundary data belong to some Nikolskii space $B^{s}_{2,\infty}(\Gamma)$. Based on an embedding result (Proposition~\ref{8.1}), we show that the solution belongs pathwise to the space $H^\alpha(\Omega)$ under some condition on $\alpha$. The investigation of such boundary-value problems is motivated by recent results on boundary noise (see, e.g., \cite{SchnaubeltVeraar11}) and on the Besov smoothness of white noise \cite{FageotFallahUnser17,Veraar11}.

\section{Statement of the problem}\label{sec2}
Let $\Omega\subset\mathbb{R}^n$, where $n\geq2$, be a bounded domain with an infinitely smooth boundary $\Gamma$. We consider the following boundary value problem:
\begin{align}
Au& =f\quad\mbox{in}\;\Omega,\label{f1}\\
B_{j}u& =g_{j}\quad\mbox{on}\;\Gamma,\quad j=1,...,q.\label{f2}
\end{align}
Here,
$$
A:=A(x,D):=\sum_{|\mu|\leq 2q}a_{\mu}(x)D^{\mu}\
$$
is a linear PDO on $\overline{\Omega}:=\Omega\cup\Gamma$ of even order $2q\geq2$, and each
$$
B_{j}:=B_{j}(x,D)=\sum_{|\mu|\leq m_{j}}b_{j,\mu}(x)D^{\mu}\
$$
is a linear boundary PDO on $\Gamma$ of order $m_{j}\leq2q-1$. All the coefficients $a_{\mu}$ and $b_{j,\mu}$ of these PDOs belong to the complex spaces $C^{\infty}(\overline{\Omega})$ and $C^{\infty}(\Gamma)$, resp. Let $B:=(B_{1},\ldots,B_{q})$ and $g:=(g_{1},\ldots,g_{q})$.

We use the following standard notation:
$\mu:=(\mu_{1},\ldots,\mu_{n})$ is a multi-index with nonnegative integer components, $|\mu|:=\mu_{1}+\cdots+\mu_{n}$,
$D^{\mu}:=D_{1}^{\mu_{1}}\cdots D_{n}^{\mu_{n}}$, $D_{k}:=i\partial/\partial x_{k}$, $k=1,...,n$, where $i$ is imaginary unit and $x=(x_1,\ldots,x_n)$ is an arbitrary point in $\mathbb{R}^{n}$.

We suppose throughout the paper that the boundary value problem \eqref{f1}, \eqref{f2} is regular elliptic in $\Omega$. This means that the PDO $A$ is properly elliptic on $\overline{\Omega}$ and that the system $B$ of boundary PDOs is normal and satisfies the Lopatinskii condition with respect to $A$ on $\Gamma$ (see, e.g., the survey \cite[Section~1.2]{Agranovich97}). Recall that, since the system $B$ is normal, the orders $m_{j}$ of $B_{j}$ are all different.

We investigate properties of the extension (by continuity) of the mapping \begin{equation}\label{mapping}
u\mapsto(Au,Bu)=(Au,B_{1}u,\ldots,B_{q}u),\quad\mbox{where}\quad
u\in C^{\infty}(\overline{\Omega}),
\end{equation}
on appropriate pairs of Hilbert distribution spaces. To describe the range of this extension, we need the following Green's formula:
\begin{equation*}
(Au,v)_{\Omega} + \sum^{q}_{j=1}(B_{j}u,C^{+}_{j}v)_{\Gamma} =
(u,A^{+}v)_{\Omega} + \sum_{j=1}^{q}(C_{j}u,B^{+}_{j}v)_{\Gamma}
\end{equation*}
for arbitrary $u,v\in C^{\infty}(\overline{\Omega})$. Here,
$$
A^{+}v(x):=\sum_{|\mu|\leq2q}D^{\mu}(\overline{a_{\mu}(x)}\,v(x))
$$
is the linear PDO which is formally adjoint to $A$, and
$\{B^+_j\}$, $\{C_j\}$, $\{C^+_j\}$ are some normal sets of linear boundary PDOs with coefficients from $C^\infty(\Gamma)$. The orders of these PDOs satisfy the condition
$$
\mathrm{ord}\,B_j+\mathrm{ord}\,C^+_j=\mathrm{ord}\,C_j+\mathrm{ord}\,B^+_j
=2q-1.
$$
In Green's formula and below, $(\cdot,\cdot)_\Omega$ and $(\cdot,\cdot)_\Gamma$ denote the inner products in the complex Hilbert spaces $L_2(\Omega)$ and $L_2(\Gamma)$ of all
functions that are square integrable over $\Omega$ and $\Gamma$, respectively (relative to the Lebesgue measure, of course), and also denote extensions by continuity of these inner products.

The boundary value problem
\begin{align}\label{f3}
A^{+}v&=w,\quad\mbox{in}\;\Omega,\\
B^{+}_{j}v&=h_{j},\quad\mbox{on}\;\Gamma,\quad j=1,\ldots,q,
\label{f4}
\end{align}
is called formally adjoint to the problem \eqref{f1}, \eqref{f2}
with respect to the given Green formula. The latter problem is regular  elliptic if and only if the formally adjoint problem \eqref{f3}, \eqref{f4} is regular elliptic \cite[Chapter~2, Section~2.5]{LionsMagenes72}.

Denote
\begin{gather*}
N:=\bigl\{u\in C^{\infty}(\overline{\Omega}):
\,Au=0\;\,\mbox{in}\;\,\Omega,\;\,
Bu=0\;\,\mbox{on}\;\,\Gamma\bigr\},\\
N^{+}:=\bigl\{v\in C^{\infty}(\overline{\Omega}):
\,A^{+}v=0\;\,\mbox{in}\;\,\Omega,\;\,
B^{+}v=0\;\,\mbox{on}\;\,\Gamma\bigr\},
\end{gather*}
with $B^{+}:=(B^{+}_{1},\ldots, B^{+}_{q})$. Since both problems \eqref{f1}, \eqref{f2} and \eqref{f3}, \eqref{f4} are regular elliptic, both spaces $N$ and $N^+$ are finite-dimensional \cite[Chapter~2, Section~2.5]{LionsMagenes72}. Besides, the space $N^{+}$ is independent of any choice of the collection $B^{+}$ of boundary differential expressions that satisfy Green's formula.

\section{Generalized Sobolev spaces}\label{sec3}

We investigate the boundary value problem \eqref{f1}, \eqref{f2} in certain Hilbert distribution spaces that are generalizations of inner product Sobolev spaces (of an arbitrary real order) to the case where a general enough function parameter is used as an order of the space. Such spaces were introduced and investigated by Malgrange \cite{Malgrange57}, H\"ormander \cite[Sec. 2.2]{Hermander63}, and Volevich and Paneah \cite[Section~2]{VolevichPaneah65}.

This function parameter ranges over a certain class OR of O-regularly varying functions.
By definition, OR is the class of all Borel measurable functions
$\alpha:[1,\infty)\rightarrow(0,\infty)$ such that
\begin{equation}\label{f3.1}
c^{-1}\leq\frac{\alpha(\lambda t)}{\alpha(t)}\leq c\quad\mbox{for arbitrary}\quad t\geq1\quad\mbox{and}\quad\lambda\in[1,b]
\end{equation}
with some numbers $b>1$ and $c\geq1$ that are independent of both $t$ and $\lambda$ (but may depend on $\alpha$). Such functions are called O-regularly varying at infinity in the sense of Avakumovi\'c \cite{Avakumovic36} and are well investigated \cite{BinghamGoldieTeugels89, BuldyginIndlekoferKlesovSteinebach18, Seneta76}.

The class OR admits the simple description
\begin{equation*}
\alpha\in\mathrm{OR}\quad\Longleftrightarrow\quad\alpha(t)=
\exp\Biggl(\beta(t)+\int\limits_{1}^{t}\frac{\gamma(\tau)}{\tau}\;
d\tau\Biggr), \;\;t\geq1,
\end{equation*}
where the real-valued functions $\beta$ and $\gamma$ are Borel measurable and bounded on $[1,\infty)$. Condition \eqref{f3.1} is equivalent to the following: there exist real numbers $r_{0}\leq r_{1}$ and positive numbers $c_{0}$ and $c_{1}$ such that
\begin{equation}\label{f3.2}
c_{0}\lambda^{r_{0}}\leq\frac{\alpha(\lambda t)}{\alpha(t)}\leq
c_{1}\lambda^{r_{1}}\quad\mbox{for all}\quad
t\geq1\quad\mbox{and}\quad\lambda\geq1.
\end{equation}
For every function $\alpha\in\mathrm{OR}$, we define the lower and the upper Matuszewska indices \cite{Matuszewska64} by the formulas
\begin{gather}\label{f3.2sup}
\sigma_{0}(\alpha):=\sup\{r_{0}\in\mathbb{R}:\,\mbox{the left-hand inequality in \eqref{f3.2} holds}\},\\
\sigma_{1}(\alpha):=\inf\{r_{1}\in\mathbb{R}:\,\mbox{the right-hand inequality in \eqref{f3.2} holds}\},\label{f3.2inf}
\end{gather}
with $-\infty<\sigma_{0}(\alpha)\leq\sigma_{1}(\alpha)<\infty$ (see also \cite[Theorem~2.2.2]{BinghamGoldieTeugels89}).

A standard example of functions from $\mathrm{OR}$ is given by a continuous function $\alpha:[1,\infty)\rightarrow(0,\infty)$ such that
\begin{equation*}
\alpha(t):=t^{r}(\log t)^{k_{1}}(\log\log
t)^{k_{2}}\ldots(\underbrace{\log\ldots\log}_{j\;\mathrm{times}} t)^{k_{j}}\quad\mbox{for}\quad t\gg1.
\end{equation*}
Here, we arbitrarily choose an integer $j\geq1$ and real numbers $r,k_{1},\ldots,k_{j}$. This function has equal Matuszewska indices $\sigma_{0}(\alpha)=\sigma_{1}(\alpha)=r$.

Generally, the class OR contains an arbitrary Borel measurable function $\alpha:[1,\infty)\rightarrow(0,\infty)$ such that both functions $\alpha$ and $1/\alpha$ are bounded on every bounded subset of $[1,\infty)$ and that the function $\alpha$ is regularly varying at infinity in the sense of Karamata, i.e. there exists a real number $r$ such that
\begin{equation*}
\lim_{t\rightarrow\infty}\;\frac{\alpha(\lambda\,t)}{\alpha(t)}=
\lambda^{r}\quad\mbox{for every}\;\lambda>0.
\end{equation*}
In this case $\sigma_{0}(\alpha)=\sigma_{1}(\alpha)=r$, and $r$ is called the order of $\alpha$. If $r=0$, the function $\alpha$ is called slowly varying at infinity.

A simple example of a function $\alpha\in\mathrm{OR}$ with the different Matuszewska indices is given by the formula
\begin{equation*}
\alpha(t):=\left\{
\begin{array}{ll}
t^{\theta+\delta\sin((\log\log t)^{\lambda})}\; &\hbox{if}\;\;t>e,\\
t^{\theta}\; &\hbox{if}\;\;1\leq t\leq e.
\end{array}\right.
\end{equation*}
Here, we arbitrarily choose numbers $\theta\in\mathbb{R}$, $\delta>0$, and $\lambda\in(0,1]$. Then $\sigma_{0}(\alpha)=\theta-\delta$ and $\sigma_{1}(\alpha)=\theta+\delta$ whenever $0<\lambda<1$, but $\sigma_{0}(\alpha)=\theta-\sqrt{2}\delta$ and $\sigma_{1}(\alpha)=\theta+\sqrt{2}\delta$ if $\lambda=1$. If $\lambda>1$, then $\alpha\notin\mathrm{OR}$.

Let $\alpha\in\mathrm{OR}$, and introduce the generalized Sobolev spaces  $H^{\alpha}$ over $\mathbb{R}^{n}$, with $n\geq1$, and then over $\Omega$ and~$\Gamma$. We consider complex-valued functions and distributions and therefore use complex linear spaces. It is useful for us to interpret distributions as antilinear functionals on a relevant space of test functions.

By definition, the linear space $H^{\alpha}(\mathbb{R}^{n})$ consists of all distributions $w\in\mathcal{S}'(\mathbb{R}^{n})$ such that their Fourier transform $\widehat{w}:=\mathcal{F}w$ is locally Lebesgue integrable over $\mathbb{R}^{n}$ and satisfies the condition
$$
\int\limits_{\mathbb{R}^{n}}\alpha^2(\langle\xi\rangle)\,
|\widehat{w}(\xi)|^2\,d\xi<\infty.
$$
As usual, $\mathcal{S}'(\mathbb{R}^{n})$ is the linear topological
space of tempered distributions in $\mathbb{R}^{n}$, and
$\langle\xi\rangle:=(1+|\xi|^{2})^{1/2}$ whenever
$\xi\in\mathbb{R}^{n}$. The space $H^{\alpha}(\mathbb{R}^{n})$ is endowed with the inner product
$$
(w_1,w_2)_{\alpha,\mathbb{R}^{n}}:=
\int_{\mathbb{R}^{n}}\alpha^2(\langle\xi\rangle)\,
\widehat{w_1}(\xi)\,\overline{\widehat{w_2}(\xi)}\,d\xi,
$$
and the corresponding norm $\|w\|_{\alpha,\mathbb{R}^{n}}:=
(w,w)_{\alpha,\mathbb{R}^{n}}^{1/2}$. We call $\alpha$ the order or regularity index of $H^{\alpha}(\mathbb{R}^{n})$ (and its analogs for $\Omega$ and $\Gamma$).

The space $H^{\alpha}(\mathbb{R}^{n})$ is an isotropic Hilbert case of the spaces $B_{p,k}$ introduced and systematically investigated by H\"ormander \cite[Section~2.2]{Hermander63} (see also \cite[Section~10.1]{Hermander83}). Namely,
$H^{\alpha}(\mathbb{R}^{n})=B_{p,k}$ provided that $p=2$ and
$k(\xi)=\alpha(\langle\xi\rangle)$ for all $\xi\in\mathbb{R}^{n}$.

If $\alpha(t)\equiv t^{r}$, then
$H^{\alpha}(\mathbb{R}^{n})=:H^{r}(\mathbb{R}^{n})$ is the inner product Sobolev space of order $r\in\mathbb{R}$. Generally,
\begin{equation}\label{f3.3}
r_{0}<\sigma_{0}(\alpha)\leq\sigma_{1}(\alpha)<r_{1}\;\;\Rightarrow\;\;
H^{r_1}(\mathbb{R}^{n})\hookrightarrow H^{\alpha}(\mathbb{R}^{n})\hookrightarrow
H^{r_0}(\mathbb{R}^{n}),
\end{equation}
both embeddings being continuous and dense. This is a consequence of the property \eqref{f3.2} considered for $t=1$.

A relation between $H^\alpha(\mathbb{R}^{n})$ and the space of $p$ times continuously differentiable functions reveals H\"ormander's embedding theorem \cite[Theorem~2.2.7]{Hermander63}, which is formulated in the $\alpha\in\mathrm{OR}$ case as follows \cite[Lemma~2]{ZinchenkoMurach12UMJ11}:
\begin{equation}\label{Hermander-embedding}
\int\limits_1^{\infty} t^{2p+n-1}\alpha^{-2}(t)\,dt<\infty\;\;\Longleftrightarrow\;\;
\{w\in H^\alpha(\mathbb{R}^{n}):\mathrm{supp}\,w\subset U\}\subset C^p(\mathbb{R}^{n});
\end{equation}
here, $0\leq p\in\mathbb{Z}$, and $U$ is an open nonempty subset of $\mathbb{R}^{n}$ (the case of $U=\mathbb{R}^{n}$ is possible).

Remark that we use the same designation $H^{\alpha}$ both in the case where $\alpha$ is a function and in the case where $\alpha$ is a number. This will not lead to ambiguity because we will always specify what $\alpha$ means, a function or number. This remark also concerns designations of spaces induced by $H^{\alpha}(\mathbb{R}^{n})$ and, of course, the notation of the norm and inner product in the corresponding spaces.

Following \cite{MikhailetsMurach13UMJ3}, we call the class
$\{H^{\alpha}(\mathbb{R}^{n}):\alpha\in\mathrm{OR}\bigr\}$
the extended Sobolev scale over $\mathbb{R}^{n}$. Its analogs for  $\Omega$ and $\Gamma$ are introduced in the standard way (see
\cite[Section~2]{MikhailetsMurach15ResMath1} and \cite[Section~2.4.2]{MikhailetsMurach14}, resp.). Let us give the necessary definitions.

By definition,
\begin{gather}\notag
H^{\alpha}(\Omega):=\bigl\{w\!\upharpoonright\!\Omega:
w\in H^{\alpha}(\mathbb{R}^{n})\bigr\},\\
\|u\|_{\alpha,\Omega}:=
\inf\bigl\{\,\|w\|_{\alpha,\mathbb{R}^{n}}:
w\in H^{\alpha}(\mathbb{R}^{n}),\;w=u\;\,\mbox{in}\;\,\Omega\bigr\}, \label{4f9}
\end{gather}
with $u\in H^{\alpha}(\Omega)$. The linear space $H^{\alpha}(\Omega)$ is Hilbert
and separable with respect to the norm \eqref{4f9} because $H^{\alpha}(\Omega)$ is
the factor space of the separable Hilbert space $H^\alpha(\mathbb{R}^{n})$ by its subspace
\begin{equation}\label{f3.5}
\bigl\{w\in H^{\alpha}(\mathbb{R}^{n}):\,
\mathrm{supp}\,w\subseteq\mathbb{R}^{n}\setminus\Omega\bigr\}.
\end{equation}
The norm \eqref{4f9} is induced by the inner product
\begin{equation*}
(u_1,u_2)_{\alpha,\Omega}:=
(w_1-\Pi w_1,w_2-\Pi w_2)_{\alpha,\mathbb{R}^{n}}.
\end{equation*}
Here, $u_j\in H^\alpha(\Omega)$, $w_j\in
H^{\alpha}(\mathbb{R}^{n})$, and $u_j=w_j$ in $\Omega$ for each $j\in\{1,2\}$, whereas $\Pi$ is the orthoprojector of $H^{\alpha}(\mathbb{R}^{n})$ onto \eqref{f3.5}.

The space $H^\alpha(\Omega)$ is continuously embedded in the linear topological space $\mathcal{D}'(\Omega)$ of all distributions in $\Omega$, and the set $C^\infty(\overline{\Omega})$ is dense in $H^\alpha(\Omega)$. Note that $H^\alpha(\Omega)$ is an isotropic case of Hilbert spaces introduced and investigated by Volevich and Paneah \cite[Section~3]{VolevichPaneah65}.

The linear space $H^{\alpha}(\Gamma)$ consists of all distributions on $\Gamma$ that yield elements of $H^{\alpha}(\mathbb{R}^{n-1})$ in local coordinates on $\Gamma$. Let us give a detailed definition. The boundary $\Gamma$ of $\Omega$ is an infinitely smooth closed manifold of dimension $n-1$, with the $C^{\infty}$-structure on $\Gamma$ being induced by $\mathbb{R}^{n}$. From this structure, we choose a finite collection of local charts $\pi_{j}:\mathbb{R}^{n-1}\leftrightarrow\Gamma_{j}$, $j=1,\ldots,\varkappa$, where the open sets $\Gamma_{j}$ form a covering of $\Gamma$. We also choose functions
$\chi_{j}\in C^{\infty}(\Gamma)$, $j=1,\ldots,\varkappa$, that satisfy the condition $\mathrm{supp}\,\chi_{j}\subset\Gamma_{j}$ and that form a partition of unity on $\Gamma$. Then
\begin{equation*}
H^{\alpha}(\Gamma):=\bigl\{h\in\mathcal{D}'(\Gamma):\,
(\chi_{j}h)\circ\pi_{j}\in H^{\alpha}(\mathbb{R}^{n-1})\;\;\mbox{for
every}\;\;j\in\{1,\ldots,\varkappa\}\bigr\}.
\end{equation*}
Here, as usual, $\mathcal{D}'(\Gamma)$ denotes the linear topological space of all distributions on~$\Gamma$, and $(\chi_{j}h)\circ\pi_{j}$ stands for the representation of the distribution $\chi_{j}h$ in the local chart $\pi_{j}$. The space $H^{\alpha}(\Gamma)$ is endowed with the
inner product
$$
(h_{1},h_{2})_{\alpha,\Gamma}:=
\sum_{j=1}^{\varkappa}\,((\chi_{j}h_{1})\circ\pi_{j},
(\chi_{j}\,h_{2})\circ\pi_{j})_{\alpha,\mathbb{R}^{n-1}}
$$
and the corresponding norm $\|h\|_{\alpha,\Gamma}:=(h,h)_{\alpha,\Gamma}^{1/2}$. The space
$H^\alpha (\Gamma)$ is Hilbert and separable and does not depend (up to equivalence of norms) on our choice of local charts and partition of unity on~$\Gamma$ \cite[Theorem~2.21]{MikhailetsMurach14}. This space is continuously embedded in $\mathcal{D}'(\Gamma)$, and  the set $C^{\infty}(\Gamma)$ is dense in $H^{\alpha}(\Gamma)$.

The above-defined function spaces form the extended Sobolev scales $\{H^{\alpha}(\Omega):\alpha\in\mathrm{OR}\}$ and $\{H^{\alpha}(\Gamma):\alpha\in\mathrm{OR}\}$ over $\Omega$ and $\Gamma$ respectively. They contain the scales of inner product
Sobolev spaces; namely, if $\alpha(t)\equiv t^r$ for certain $r\in\mathbb{R}$, then $H^\alpha(\Omega)=:H^{r}(\Omega)$ and $H^\alpha(\Gamma)=:H^{r}(\Gamma)$ are
the Sobolev spaces of order~$r$. Property \eqref{f3.3} remains true provided that we replace $\mathbb{R}^{n}$ with $\Omega$ or $\Gamma$,
the embeddings being compact. As we have noted, the norm in $H^{r}(G)$ is denoted by $\|\cdot\|_{r,G}$, with $G\in\{\mathbb{R}^{n},\Omega,\Gamma\}$.

The extended Sobolev scales have important interpolation properties: they are obtained by the interpolation with a function parameter between inner product Sobolev spaces, are closed with respect to the interpolation with a function parameter between Hilbert spaces, and consist (up to equivalence of norms) of all Hilbert spaces that are interpolation ones between inner product Sobolev spaces. We will discuss these properties in Section~\ref{sec5}. The first of them plays a key role in applications of these scale to elliptic operators and elliptic problems.

\section{Main results}\label{sec4}

Dealing with the problem \eqref{f1}, \eqref{f2}, we will use the generalized Sobolev space $H^{\alpha}(G)$, with $G\in\{\Omega,\Gamma\}$, whose order is a function parameter of the form  $\alpha(t)\equiv\varphi(t)t^{s}$ where $\varphi\in\mathrm{OR}$ and $s\in\mathbb{R}$. In order not to indicate the argument $t$ of the function parameter, we resort to the function $\varrho(t):=t$ of $t\geq1$. Then $\alpha$ can be written as $\varphi\varrho^{s}$ not using $t$. Note if $\varphi\in\mathrm{OR}$ and $s\in\mathbb{R}$, then $\varphi\rho^s\in\mathrm{OR}$ and $\sigma_j(\varphi\varrho^s)=\sigma_j(\varphi)+s$ for each $j\in\{0,1\}$.

It is well known that the elliptic problem \eqref{f1}, \eqref{f2} is Fredholm on appropriate pairs of Sobolev spaces of sufficiently large orders and that its index does not depend on these orders (see, e.g., \cite[Section 2.4~a]{Agranovich97} or \cite[Chapter~2, Section~5.4]{LionsMagenes72}). This result was extended to generalized Sobolev spaces in \cite[Theorem~1]{AnopMurach14UMJ} as follows:

\begin{proposition}\label{prop1}
Let $\varphi\in\mathrm{OR}$, and suppose that $\sigma_0(\varphi)>-1/2$. Then the mapping \eqref{mapping} extends uniquely (by continuity) to a bounded linear operator
\begin{equation}\label{f9}
(A,B):H^{\varphi\varrho^{2q}}(\Omega)\rightarrow
H^{\varphi}(\Omega)\oplus
\bigoplus_{j=1}^{q}H^{\varphi\varrho^{2q-m_j-1/2}}(\Gamma)
=:\mathcal{H}^\varphi(\Omega,\Gamma).
\end{equation}
This operator is Fredholm. Its kernel coincides with $N$, and its range  consists of all vectors
$(f,g)\in\mathcal{H}^\varphi(\Omega,\Gamma)$ such that
\begin{equation}\label{f10}
(f,v)_{\Omega}+\sum_{j=1}^{q}\,(g_{j},C^{+}_{j}v)_{\Gamma}=0\quad \mbox{for each}\quad v\in N^{+}.
\end{equation}
The index of the operator \eqref{f9} equals $\dim N-\dim N^{+}$ and does not depend on~$\varphi$.
\end{proposition}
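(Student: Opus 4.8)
The plan is to reduce Proposition~\ref{prop1} to the classical Sobolev-space case by interpolation with a function parameter. First I would recall that the claim is known when $\varphi(t)\equiv t^s$ is a power function with $s>-1/2$ (this is the classical Fredholm theory for regular elliptic problems, e.g.\ \cite{Agranovich97, LionsMagenes72}): the operator
\begin{equation*}
(A,B)\colon H^{s+2q}(\Omega)\to H^{s}(\Omega)\oplus\bigoplus_{j=1}^{q}H^{s+2q-m_j-1/2}(\Gamma)
\end{equation*}
is bounded and Fredholm with kernel $N$, index $\dim N-\dim N^{+}$, and range characterized by \eqref{f10}. Fix $\varphi\in\mathrm{OR}$ with $\sigma_{0}(\varphi)>-1/2$ and choose real numbers $s_{0},s_{1}$ with $-1/2<s_{0}<\sigma_{0}(\varphi)\le\sigma_{1}(\varphi)<s_{1}$; by \eqref{f3.3} (in its $\Omega$- and $\Gamma$-versions) one has the continuous dense embeddings $H^{s_{1}+2q}(\Omega)\hookrightarrow H^{\varphi\varrho^{2q}}(\Omega)\hookrightarrow H^{s_{0}+2q}(\Omega)$ and likewise on each boundary component, so the pair of spaces in \eqref{f9} sits between the corresponding Sobolev pairs for $s_0$ and $s_1$.

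Next I would invoke the interpolation properties of the extended Sobolev scale announced at the end of Section~\ref{sec3} (to be proved in Section~\ref{sec5}): there is a function parameter $\psi$ such that $H^{\varphi\varrho^{2q}}(\Omega)$ is, up to equivalence of norms, the interpolation space obtained by applying interpolation with the function parameter $\psi$ to the Sobolev couple $[H^{s_{0}+2q}(\Omega),H^{s_{1}+2q}(\Omega)]$, and similarly $H^{\varphi}(\Omega)$ and each $H^{\varphi\varrho^{2q-m_{j}-1/2}}(\Gamma)$ arise by the \emph{same} parameter $\psi$ from the couples $[H^{s_{0}}(\Omega),H^{s_{1}}(\Omega)]$ and $[H^{s_{0}+2q-m_{j}-1/2}(\Gamma),H^{s_{1}+2q-m_{j}-1/2}(\Gamma)]$ respectively. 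Interpolation commutes with finite direct sums, so $\mathcal{H}^{\varphi}(\Omega,\Gamma)$ is obtained from the target Sobolev couple by the same $\psi$. Since $(A,B)$ is bounded on both endpoint couples, the interpolation theorem yields at once that \eqref{f9} is bounded; this is how the extension by continuity of \eqref{mapping} is realized and shown unique (uniqueness follows from density of $C^{\infty}(\overline\Omega)$ in $H^{\varphi\varrho^{2q}}(\Omega)$).

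For the Fredholm statement I would appeal to the general principle that interpolation with a function parameter preserves the Fredholm property of a bounded operator acting between two interpolation couples, together with its index, kernel and the description of its range, provided the operator is Fredholm on both endpoints with the \emph{same} kernel and the same ``defect subspace'' at each endpoint (this is the abstract lemma of Mikhailets--Murach used throughout \cite{MikhailetsMurach14}; I would cite its version for the extended scale from Section~\ref{sec5}). Here the classical theory gives the same kernel $N\subset C^{\infty}(\overline\Omega)$ at both endpoints $s_0$ and $s_1$ (since $N$ is independent of the order), the same index $\dim N-\dim N^{+}$, and the range at each endpoint is cut out by the \emph{same} finitely many functionals $(f,g)\mapsto (f,v)_{\Omega}+\sum_j (g_j,C_j^{+}v)_{\Gamma}$, $v\in N^{+}$, which are continuous on $\mathcal{H}^{\varphi}(\Omega,\Gamma)$ because $N^{+}\subset C^{\infty}(\overline\Omega)$ and the pairings extend by continuity. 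Applying the interpolation lemma then transfers all of these to \eqref{f9}: it is Fredholm, $\ker(A,B)=N$, its range is exactly the set of $(f,g)$ satisfying \eqref{f10}, and $\mathrm{ind}(A,B)=\dim N-\dim N^{+}$; independence of $\varphi$ is immediate since the index does not depend on $s_0,s_1$ either.

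The main obstacle is ensuring that the \emph{same} function parameter $\psi$ simultaneously interpolates all the couples involved (domain over $\Omega$ and each boundary couple over $\Gamma$) — i.e.\ that the shift of the Sobolev order by the fixed amounts $2q$, $0$, and $2q-m_j-1/2$ is compatible with a single interpolation parameter. This is exactly the content of the relation $\sigma_j(\varphi\varrho^{s})=\sigma_j(\varphi)+s$ noted in Section~\ref{sec4} and of the behaviour of interpolation parameters under multiplication by a power of $\varrho$, which will be established in Section~\ref{sec5}; once that bookkeeping is in place, the rest is a direct application of the interpolation machinery. A secondary technical point is checking that the extension by continuity furnished by interpolation coincides with the extension of \eqref{mapping} — this follows from density of $C^{\infty}(\overline\Omega)$ in all spaces concerned and the fact that $(A,B)$ already maps $C^{\infty}(\overline\Omega)$ into $C^{\infty}(\overline\Omega)\times(C^{\infty}(\Gamma))^{q}$, which lies in every target space.
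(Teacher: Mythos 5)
Your proposal is correct and follows essentially the same route the paper takes: Proposition~\ref{prop1} is quoted from \cite{AnopMurach14UMJ}, where it is proved exactly by interpolating the Sobolev-endpoint Fredholm operators with the function parameter $\psi$ of Proposition~\ref{prop5.1} and invoking the theorem on interpolation of Fredholm operators, which is also precisely the scheme the paper itself carries out in Section~\ref{sec6} for Theorem~\ref{th1}. Your ``bookkeeping'' concern is resolved by the computation $t^{s_0+r}\psi(t^{s_1-s_0})=\varphi(t)t^{r}$, so a single $\psi$ does interpolate all the couples simultaneously, as in the paper's proof.
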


Recall that the bounded linear operator $T:\nobreak E_{1}\rightarrow E_{2}$ between Banach spaces $E_{1}$ and $E_{2}$ is called Fredholm if its kernel $\ker T$ and cokernel $E_{2}/T(E_{1})$ are finite-dimensional. If $T$ is Fredholm, then its range $T(E_{1})$ is closed in $E_{2}$ (see, e.g.,  \cite[Лемма~19.1.1]{Hermander85}) and the index $\mathrm{ind}\,T:=\dim\ker T-\dim(E_{2}/T(E_{1}))$ is finite.

As to formula \eqref{f10}, recall that the inner product in $L_{2}(\Omega)$
extends by continuity to a sesquilinear form $(f,v)_{\Omega}$ of arbitrary arguments $f\in H^{-1/2+}(\Omega)$ and $v\in H^{1/2}(\Omega)$ (see, e.g., \cite[Theorem~4.8.2(b)]{Triebel95}). Here and below,
\begin{equation*}
H^{r+}(\Omega):=\bigcup_{\ell>r}H^{\ell}(\Omega)=
\bigcup_{\substack{\varphi\in\mathrm{OR},\\\sigma_0(\varphi)>r}}
H^{\varphi}(\Omega)\quad\mbox{for every}\quad r\in\mathbb{R}.
\end{equation*}
Thus, the first summand in \eqref{f10} is well defined. The next summands are also well defined being equal to the value of the distribution $g_{j}\in\mathcal{D}'(\Gamma)$ at the test function $C^{+}_{j}v\in C^{\infty}(\Gamma)$.

Proposition~\ref{prop1} is not true in the case of $\sigma_0(\varphi)\leq-1/2$. This is stipulated by the fact that the mapping $u\mapsto B_{j}u$, where $u\in C^{\infty}(\overline{\Omega})$, can not be extended to the continuous linear operator $B_{j}:H^{(s+2q)}(\Omega)\rightarrow\mathcal{D}'(\Gamma)$ if $s+2q\leq m_{j}+1/2$ (see \cite[Chapter~1, Теорема~9.5]{LionsMagenes72}). Therefore to obtain a version of Proposition~\ref{prop1} in this case, we have to take a narrower space than $H^{\varphi\varrho^{2q}}(\Omega)$ as the domain of the operator $(A,B)$. We will show that it is possible for this purpose to take the space of all distributions $u\in H^{\varphi\varrho^{2q}}(\Omega)$ such that $Au\in H^{\eta}(\Omega)$ for certain $\eta\in\mathrm{OR}$ subject to $\sigma_0(\eta)>-1/2$.

Let us previously consider this space for arbitrary function parameters $\alpha:=\varphi\varrho^{2q}$ and $\eta$ from $\mathrm{OR}$.  We put
\begin{equation}\label{f11}
H^{\alpha}_{A,\eta}(\Omega):=
\bigl\{u\in H^{\alpha}(\Omega):\,Au\in H^\eta(\Omega)\bigr\},
\end{equation}
with $Au$ being understood in the sense of the theory of distributions.  The linear space \eqref{f11} is endowed with the graph inner product
\begin{equation*}
(u_1,u_2)_{\alpha,A,\eta}:=
(u_1,u_2)_{\alpha,\Omega}+(Au_1,Au_2)_{\eta,\Omega}
\end{equation*}
and the corresponding norm $\|u\|_{\alpha,A,\eta}:=(u,u)_{\alpha,A,\eta}^{1/2}$.

The space $H^{\alpha}_{A,\eta}(\Omega)$ is complete, i.e. Hilbert. Indeed if $(u_{k})$ is a Cauchy sequence in this space,  there exist limits $u:=\lim u_{k}$ in $H^{\alpha}(\Omega)$ and  $f:=\lim Au_{k}$ in $H^{\eta}(\Omega)$. Since the PDO $A$ is continuous in $\mathcal{D}'(\Omega)$, the first limit implies that $Au=\lim Au_{k}$ in $\mathcal{D}'(\Omega)$. Hence, $Au=f\in H^{\eta}(\Omega)$. Therefore, $u\in H^{\alpha}_{A,\eta}(\Omega)$ and $\lim u_{k}=u$ in the space $H^{\alpha}_{A,\eta}(\Omega)$, i.e. this space is complete.

If $\alpha(t)\equiv t^{r}$ and $\eta(t)\equiv t^{\lambda}$ for some $r,\lambda\in\mathbb{R}$ (the Sobolev case), the space $H^{r}_{A,\lambda}(\Omega):=H^{\alpha}_{A,\eta}(\Omega)$ is investigated in \cite{KasirenkoMikhailetsMurach19}. This space is used in the theory of elliptic problems in negative Sobolev spaces \cite{Geymonat62, LionsMagenes61II, LionsMagenes62V, LionsMagenes63VI, Magenes65, MikhailetsMurach14, Murach09MFAT2}. If the functions $\alpha$ and $\eta$ are regularly varying at infinity, the space $H^{\alpha}_{A,\eta}(\Omega)$ is also applied to these problems (see \cite[Section~4.5.2]{MikhailetsMurach14} and \cite{AnopKasirenkoMurach18UMJ3}). Note that $H^{\alpha}_{A,\eta}(\Omega)$ may depend on each coefficient of the differential expression $A$, even when all these coefficients are constant. For instance, this is the case if $\alpha(t)\equiv\eta(t)\equiv1$ \cite[Theorem~3.1]{Hermander55}. Recall in this connection that the space $H^{0}_{A,0}(\Omega)$ is the domain of the maximal operator that corresponds to the unbounded operator $C^{\infty}(\overline{\Omega})\ni u\mapsto Au$ in $L_{2}(\Omega)$ (see, e.g., \cite{Hermander55}).

In the sequel, we suppose that $\varphi\in\mathrm{OR}$ and consider the case where $\sigma_0(\varphi)\leq-1/2$. Let us formulate our key result, which is a version of Proposition~\ref{prop1} in this case. We choose real numbers $s_0$, $s_1$, and $\lambda$ such that
\begin{equation}\label{f4.4}
s_0<\sigma_0(\varphi),\quad s_1>\sigma_1(\varphi),\quad \lambda>-1/2
\end{equation}
and that
\begin{equation}\label{f4.5}
\left\{
  \begin{array}{ll}
    \lambda\leq s_{1}&\hbox{if}\;\;\;\sigma_1(\varphi)\geq-1/2;\\
    s_{1}<-1/2&\hbox{if}\;\;\;\sigma_1(\varphi)<-1/2.
  \end{array}
\right.
\end{equation}

If $\sigma_1(\varphi)\geq-1/2$, we introduce the function
\begin{equation}\label{f13}
\eta(t):=t^{(1-\theta)s_{1}}\varphi(t^\theta)
\quad\mbox{of}\quad t\geq1,\quad\mbox{with}\quad \theta:=\frac{s_1-\lambda}{s_1-s_0}.
\end{equation}
Then $0\leq\theta<1$, $\eta\in\mathrm{OR}$, and $\sigma_{j}(\eta)=(1-\theta)s_{1}+\theta\sigma_{j}(\varphi)$ for every $j\in\{0,1\}$, which implies in view of \eqref{f4.4} that $\sigma_{0}(\eta)>\lambda>-1/2$ and, hence, $H^{\eta}(\Omega)\hookrightarrow H^{\lambda}(\Omega)$. Besides, since $\varphi(t)/\varphi(t^{\theta})\leq c_{1}t^{(1-\theta)s_{1}}$ whenever $t\geq1$ due to \eqref{f3.2}, we conclude that $\varphi(t)/\eta(t)\leq c_{1}$ whenever $t\geq1$, which implies the continuous embedding $H^{\eta}(\Omega)\hookrightarrow H^{\varphi}(\Omega)$.

If $\sigma_1(\varphi)<-1/2$, we put $\eta(t):=t^{\lambda}$ for every $t\geq1$. Then $H^{\eta}(\Omega)=H^{\lambda}(\Omega)\hookrightarrow H^{\varphi}(\Omega)$ because $\lambda>-1/2>\sigma_{1}(\varphi)$.

Thus,
\begin{equation}\label{f13bis}
\mbox{the continuous embedding}\quad H^{\eta}(\Omega)\hookrightarrow H^{\lambda}(\Omega)\cap H^{\varphi}(\Omega)
\end{equation}
holds true whatever $\sigma_1(\varphi)$ is.

The following theorem is a key result of this paper.

\begin{theorem}\label{th1}
The set $C^{\infty}(\overline{\Omega})$ is dense in the space  $H^{\varphi\rho^{2q}}_{A,\eta}(\Omega)$, and the mapping \eqref{mapping} extends uniquely (by continuity) to a bounded linear operator \begin{equation}\label{f15}
(A,B):H^{\varphi\rho^{2q}}_{A,\eta}(\Omega)\to H^{\eta}(\Omega)\oplus
\bigoplus_{j=1}^{q}H^{\varphi\rho^{2q-m_j-1/2}}(\Gamma)=:
\mathcal{H}^{\eta,\varphi}(\Omega,\Gamma).
\end{equation}
This operator is Fredholm. Its kernel coincides with $N$, and its range  consists of all vectors $(f,g)\in\mathcal{H}^{\eta,\varphi}(\Omega,\Gamma)$ that satisfy \eqref{f10}. The index of the operator \eqref{f15} equals $\dim N-\dim N^{+}$ and does not depend on $\varphi$ and~$\eta$.
\end{theorem}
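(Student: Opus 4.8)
The plan is to deduce Theorem~\ref{th1} from its Sobolev-space counterpart — the case where $\varphi$ and $\eta$ are (positive or negative) powers of $\varrho$, which is contained in \cite{KasirenkoMikhailetsMurach19} and ultimately rests on the Lions--Magenes and Roitberg theory \cite{LionsMagenes72,Roitberg96} of elliptic problems in the spaces $H^{r}_{A,\lambda}(\Omega)$ with $\lambda>-1/2$ — and then to pass to an arbitrary $\varphi\in\mathrm{OR}$ by interpolation with a function parameter, using the interpolation results prepared in Section~\ref{sec5}.

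First I would fix the interpolation parameter. By \eqref{f4.4} we have $s_0<\sigma_0(\varphi)\leq\sigma_1(\varphi)<s_1$, so the function $\psi_0(t):=t^{-s_0/(s_1-s_0)}\varphi(t^{1/(s_1-s_0)})$, $t\geq1$, belongs to $\mathrm{OR}$ and has Matuszewska indices $(\sigma_j(\varphi)-s_0)/(s_1-s_0)\in(0,1)$; hence $\psi_0$ is an admissible parameter for the interpolation of Section~\ref{sec5}. Since $\varphi(t)=t^{s_0}\psi_0(t^{s_1-s_0})$, a direct computation shows that applying $[\cdot,\cdot]_{\psi_0}$ to the Sobolev couple $(H^{s_0+2q}(\Omega),H^{s_1+2q}(\Omega))$ returns $H^{\varphi\varrho^{2q}}(\Omega)$, that applying it to $(H^{s_0+2q-m_j-1/2}(\Gamma),H^{s_1+2q-m_j-1/2}(\Gamma))$ returns $H^{\varphi\varrho^{2q-m_j-1/2}}(\Gamma)$, and — this is exactly the reason for the definition \eqref{f13} — that applying it to $(H^{\lambda}(\Omega),H^{s_1}(\Omega))$ returns $H^{\eta}(\Omega)$ when $\sigma_1(\varphi)\geq-1/2$. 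When $\sigma_1(\varphi)<-1/2$ we have $\eta=\varrho^{\lambda}$ and simply keep the corresponding target component untouched.

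Next I would introduce the two endpoint operators. Put $\lambda_0:=\lambda$, and $\lambda_1:=s_1$ if $\sigma_1(\varphi)\geq-1/2$ while $\lambda_1:=\lambda$ if $\sigma_1(\varphi)<-1/2$; then \eqref{f4.4} and \eqref{f4.5} give $\lambda_i>-1/2$ and $\lambda_0\leq\lambda_1$, $s_0<s_1$, so the Sobolev-space version of the theorem applies with the parameters $(s_i+2q,\lambda_i)$ and yields bounded Fredholm operators
\begin{equation*}
(A,B)\colon H^{s_i+2q}_{A,\lambda_i}(\Omega)\to H^{\lambda_i}(\Omega)\oplus\bigoplus_{j=1}^{q}H^{s_i+2q-m_j-1/2}(\Gamma),\qquad i=0,1,
\end{equation*}
each with kernel $N$, index $\dim N-\dim N^{+}$, and range $\{(f,g):\eqref{f10}\}$, and in each of which $C^{\infty}(\overline{\Omega})$ is dense (for $i=1$ with $\lambda_1=s_1$ the domain coincides with $H^{s_1+2q}(\Omega)$ up to equivalence of norms, as $\mathrm{ord}\,A=2q$). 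Because $s_1+2q>s_0+2q$ and $\lambda_1\geq\lambda_0$, both the domains and the targets form regular couples with the smaller space densely embedded in the larger one, and by the identities above — together with the interpolation property of the graph spaces $H^{\alpha}_{A,\eta}(\Omega)$ (cf.\ Section~\ref{sec5} and \cite{KasirenkoMikhailetsMurach19}) — interpolating the domain couple with $\psi_0$ gives exactly $H^{\varphi\varrho^{2q}}_{A,\eta}(\Omega)$ and interpolating the target couple with $\psi_0$ gives exactly $\mathcal{H}^{\eta,\varphi}(\Omega,\Gamma)$. Applying the interpolation functor $[\cdot,\cdot]_{\psi_0}$ to the two endpoint operators then produces a bounded operator $H^{\varphi\varrho^{2q}}_{A,\eta}(\Omega)\to\mathcal{H}^{\eta,\varphi}(\Omega,\Gamma)$ which coincides with $u\mapsto(Au,Bu)$ on $C^{\infty}(\overline{\Omega})$; since the upper Matuszewska index of $\psi_0$ is $<1$, the smaller endpoint space — hence $C^{\infty}(\overline{\Omega})$ — is dense in $H^{\varphi\varrho^{2q}}_{A,\eta}(\Omega)$, so this operator is the required continuous extension \eqref{f15}. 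By the interpolation theorem for Fredholm operators (Section~\ref{sec5}) the operator \eqref{f15} is Fredholm with $\ker(A,B)=N$ and $\mathrm{ind}(A,B)=\dim N-\dim N^{+}$, which does not depend on $\varphi$ or $\eta$, and its range is the $\psi_0$-interpolation of the endpoint ranges. Finally, $\{(f,g)\in\mathcal{H}^{\eta,\varphi}(\Omega,\Gamma):\eqref{f10}\}$ contains this range (Green's formula gives \eqref{f10} for $u\in C^{\infty}(\overline{\Omega})$, and for $v\in N^{+}\subset C^{\infty}(\overline{\Omega})$ the form $(f,v)_{\Omega}+\sum_{j}(g_{j},C^{+}_{j}v)_{\Gamma}$ is continuous on $\mathcal{H}^{\eta,\varphi}(\Omega,\Gamma)$ by \eqref{f13bis}); it is a closed subspace of codimension $\dim N^{+}$, matching $\dim\mathrm{coker}(A,B)=\dim N-\mathrm{ind}(A,B)=\dim N^{+}$, so the range equals this subspace.

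The hard part is the interpolation step for the graph-type domains $H^{\alpha}_{A,\eta}(\Omega)$ together with the requirement that a \emph{single} parameter $\psi_0$ act correctly on the $\Omega$- and $\Gamma$-components of the target simultaneously — a compatibility forced precisely by the special form \eqref{f13} of $\eta$ — and, at the low endpoint $s_0+2q$, the Lions--Magenes/Roitberg-type trace theorem which is what makes each $B_j$ bounded on $H^{s_0+2q}_{A,\lambda}(\Omega)$ even though $s_0+2q\leq m_j+1/2$ for some $j$; both ingredients are inherited from Section~\ref{sec5} and \cite{KasirenkoMikhailetsMurach19}, so the argument above is essentially bookkeeping once they are in place.
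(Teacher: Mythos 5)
Your proposal is correct and follows essentially the same route as the paper: the same interpolation parameter $\psi_{0}$ (which is exactly the $\psi$ of Proposition~\ref{prop5.1} with $\alpha=\varphi\varrho^{2q}$), the same two Sobolev endpoints $(s_{0}+2q,\lambda)$ and $(s_{1}+2q,s_{1})$ (resp.\ $(s_{1}+2q,\lambda)$ when $\sigma_{1}(\varphi)<-1/2$), the identification of the interpolated domain via Theorem~\ref{th5.3} together with the computation showing that \eqref{f13} is precisely what makes $[H^{\lambda}(\Omega),H^{s_1}(\Omega)]_{\psi_0}=H^{\eta}(\Omega)$, and the Fredholm interpolation theorem. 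The only cosmetic difference is that you identify the range by a codimension count against the subspace cut out by \eqref{f10} (which the paper establishes separately as \eqref{sum2b}), whereas the paper reads the range off the intersection formula of the Fredholm interpolation theorem; both are valid.
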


This theorem and other results of this section will be proved in Section~\ref{sec6}.

\begin{remark}\label{bounded-operator}
If the system $B$ were not normal or if it did not satisfy the Lopatinskii condition, the operator \eqref{f15} would remain to be well defined and bounded. This follows from the fact that the boundedness of the operator \eqref{f9} does not depend on the ellipticity of the problem \eqref{f1}, \eqref{f2} (see the proof of Theorem~\ref{th1} in Section~\ref{sec6}).
\end{remark}

Let us discuss Theorem~\ref{th1} in the Sobolev case where $\varphi(t)\equiv t^{s}$ for a certain real number $s\leq-1/2$. If $s<-1/2$, this theorem yields the Fredholm bounded operator
\begin{equation}\label{f4.7}
(A,B):H^{s+2q}_{A,\lambda}(\Omega)\to H^{\lambda}(\Omega)\oplus
\bigoplus_{j=1}^{q}H^{s+2q-m_j-1/2}(\Gamma)=:
\mathcal{H}^{\lambda,s}(\Omega,\Gamma)
\end{equation}
for every real number $\lambda>-1/2$. If $s=-1/2$, we obtain the same operator by choosing $s_{0}=-1$ and $s_{1}=\lambda>-1/2$ in \eqref{f13}. The boundedness and Fredholm property of the operator \eqref{f4.7} were proved by Lions and Magenes \cite{LionsMagenes62V, LionsMagenes63VI} provided that $\lambda=0$, $s<-1/2$, and
\begin{equation}\label{f4.8}
s+2q\neq-k+1/2\quad\mbox{whenever}\quad 1\leq k\in\mathbb{Z}.
\end{equation}
Their result was extended to every $\lambda>-1/2$ in \cite[Corollary~1]{Murach09MFAT2} (see also \cite[Theorem~4.27]{MikhailetsMurach14}). If $\varphi$ is a regular varying function at infinity of order $s+2q<-1/2$ subject to \eqref{f4.8}, Theorem~\ref{th1} is established in \cite[Theorem~4.32]{MikhailetsMurach14}. If $\varphi\in\mathrm{OR}$, this theorem is proved in our paper \cite[Section~4]{AnopMurach15Coll2} in the case where $s_{1}\geq0$ and $\lambda=0$, the paper being published in Ukrainian. In this case, $H^{\eta}(\Omega)\subseteq L_{2}(\Omega)$.

Remark that Theorem~\ref{th1} remains true if we change $\eta$ for every $\omega\in\mathrm{OR}$ such that the function $\eta/\omega$ is bounded in a neighbourhood of infinity. This follows plainly from the continuous embedding $H^{\omega}(\Omega)\hookrightarrow H^{\eta}(\Omega)$.

If $N=\{0\}$ and $N^+=\{0\} $, the operator \eqref{f15} is an isomorphism between the spaces $H^{\varphi\rho^{2q}}_{A,\eta}(\Omega)$ and $\mathcal{H}^{\eta,\varphi}(\Omega,\Gamma)$. Generally, this operator induces an isomorphism between some of their subspaces of finite codimension. In this connection, the next result will be useful.

\begin{lemma}\label{lema1}
Let $\alpha,\omega\in\mathrm{OR}$ and $r\in\mathbb{R}$ satisfy $r<\sigma_{0}(\alpha)\leq0$ and $\sigma_{0}(\omega)>-1/2$. Then there exists a number $c>0$ such that
\begin{equation}\label{lema1-bound}
|(u,w)_{\Omega}|\leq c\,
\|u\|_{\alpha,A,\omega}\cdot\|w\|_{-r,\Omega}
\end{equation}
for arbitrary functions $u,w\in C^{\infty}(\overline{\Omega})$. Thus,
the sesquilinear form $(u,w)_\Omega$ of $u,w\in C^{\infty}(\overline{\Omega})$ extends uniquely (by continuity) over all $u\in H^{\alpha}_{A,\omega}(\Omega)$ and $w\in H^{-r}(\Omega)$.
\end{lemma}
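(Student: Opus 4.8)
The plan is to express $(u,w)_\Omega$, for $u,w\in C^\infty(\overline\Omega)$, by Green's formula applied to a solution $v$ of the formally adjoint elliptic problem $(A^+,B^+)$ with data built from $w$, and then to estimate the resulting terms by $\|u\|_{\alpha,A,\omega}$ and $\|w\|_{-r,\Omega}$. Since $r<\sigma_0(\alpha)\le0$ we have $-r>0$. First I reduce the orders: choosing $r'\in(r,\sigma_0(\alpha))$ and $\lambda\in(-1/2,\sigma_0(\omega))$ (both intervals are nonempty, and automatically $r'<0$), property \eqref{f3.3} for $\Omega$ gives $H^\alpha(\Omega)\hookrightarrow H^{r'}(\Omega)$ and $H^\omega(\Omega)\hookrightarrow H^{\lambda}(\Omega)$, hence $\|u\|_{r',A,\lambda}\le c\,\|u\|_{\alpha,A,\omega}$ for all $u\in C^\infty(\overline\Omega)$. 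The point of this step is that $\varrho^{r'}$ has negative order, so Theorem~\ref{th1} applies to $H^{r'}_{A,\lambda}(\Omega)$ with $\varphi=\varrho^{r'-2q}$ (for which $\sigma_1(\varphi)=r'-2q<-1/2$) and $\eta=\varrho^{\lambda}$; it yields that $C^\infty(\overline\Omega)$ is dense in $H^{r'}_{A,\lambda}(\Omega)$ and that $\|B_ju\|_{r'-m_j-1/2,\Gamma}\le c\,\|u\|_{r',A,\lambda}\le c\,\|u\|_{\alpha,A,\omega}$ for $u\in C^\infty(\overline\Omega)$ and $j=1,\dots,q$. It therefore suffices to estimate $|(u,w)_\Omega|$ by $\|u\|_{r',A,\lambda}\,\|w\|_{-r,\Omega}$, where $\|u\|_{\alpha,\Omega}$ may also be used freely.

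Next I construct $v$. By Proposition~\ref{prop1} applied to the regular elliptic problem $(A^+,B^+)$ with parameter $\varrho^{-r}$ (admissible since $-r>-1/2$), the operator $(A^+,B^+)\colon H^{-r+2q}(\Omega)\to H^{-r}(\Omega)\oplus\bigoplus_{j}H^{-r+2q-\mathrm{ord}\,B^+_j-1/2}(\Gamma)$ is Fredholm with kernel $N^+$; moreover its range contains a vector $(F,0)$ precisely when $(F,u_k)_\Omega=0$ for the members $u_1,\dots,u_d$ of a fixed basis of $N$ (here one uses that, with respect to the symmetric Green's formula, the double formal adjoint of $(A,B)$ is again $(A,B)$, so the kernel entering the range condition is $N$). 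Since the $u_k$ are linearly independent in $L_2(\Omega)$ and smooth functions with compact support in $\Omega$ are dense in $L_2(\Omega)$, I fix such functions $\psi_1,\dots,\psi_d$ with $[(\psi_i,u_k)_\Omega]_{i,k}$ invertible. Given $w\in C^\infty(\overline\Omega)$, the unique scalars $c_i$ with $(w-\sum_i c_i\psi_i,u_k)_\Omega=0$ for all $k$ satisfy $|c_i|\le c\,\|w\|_{0,\Omega}\le c\,\|w\|_{-r,\Omega}$, so there is $v\in H^{-r+2q}(\Omega)$, which I take orthogonal to $N^+$, solving $A^+v=w-\sum_i c_i\psi_i$ in $\Omega$ and $B^+v=0$ on $\Gamma$, with $\|v\|_{-r+2q,\Omega}\le c\,\|w\|_{-r,\Omega}$. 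As the data are smooth, elliptic regularity for $(A^+,B^+)$ gives $v\in C^\infty(\overline\Omega)$.

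Now I apply Green's formula to $u,v\in C^\infty(\overline\Omega)$. Since $B^+v=0$, the sum $\sum_j(C_ju,B^+_jv)_\Gamma$ vanishes, and $A^+v=w-\sum_i c_i\psi_i$, so
\begin{equation*}
(u,w)_\Omega=(Au,v)_\Omega+\sum_{j=1}^{q}(B_ju,C^+_jv)_\Gamma+\sum_{i=1}^{d}\overline{c_i}\,(u,\psi_i)_\Omega .
\end{equation*}
For the first term, $Au\in H^\omega(\Omega)\subset H^{-1/2+}(\Omega)$ because $\sigma_0(\omega)>-1/2$, so the continuity of the extended $L_2(\Omega)$-pairing on $H^{-1/2+}(\Omega)\times H^{1/2}(\Omega)$ recalled after Proposition~\ref{prop1} gives $|(Au,v)_\Omega|\le c\,\|Au\|_{\omega,\Omega}\,\|v\|_{1/2,\Omega}\le c\,\|Au\|_{\omega,\Omega}\,\|v\|_{-r+2q,\Omega}\le c\,\|u\|_{\alpha,A,\omega}\,\|w\|_{-r,\Omega}$, using $-r+2q>1/2$. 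For the boundary terms, $\Gamma$ is a closed manifold, so $|(B_ju,C^+_jv)_\Gamma|\le\|B_ju\|_{r'-m_j-1/2,\Gamma}\,\|C^+_jv\|_{m_j+1/2-r',\Gamma}$; the first factor is $\le c\,\|u\|_{\alpha,A,\omega}$ by the reduction step, while $\mathrm{ord}\,C^+_j=2q-1-m_j$ and $r'<1/2+m_j$ let the trace theorem bound the second factor by $c\,\|v\|_{2q-r',\Omega}\le c\,\|v\|_{-r+2q,\Omega}\le c\,\|w\|_{-r,\Omega}$ (here $r<r'$). Finally, each $\psi_i$ is smooth with compact support in $\Omega$, so $u\mapsto(u,\psi_i)_\Omega$ is continuous on $H^\alpha(\Omega)$ by the continuous embedding $H^\alpha(\Omega)\hookrightarrow\mathcal D'(\Omega)$, giving $|\overline{c_i}\,(u,\psi_i)_\Omega|\le c\,\|u\|_{\alpha,\Omega}\,\|w\|_{-r,\Omega}$. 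Adding the three estimates yields \eqref{lema1-bound}; the extension statement then follows from \eqref{lema1-bound} together with the density of $C^\infty(\overline\Omega)$ in $H^{-r}(\Omega)$ and in $H^\alpha_{A,\omega}(\Omega)$ (the latter obtained as in Theorem~\ref{th1}).

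I expect the principal difficulty to be the handling of the $\dim N$-dimensional cokernel of $(A^+,B^+)$: the natural cokernel representatives $u_k$ are only smooth up to $\Gamma$, and the corresponding remainder terms $(u,u_k)_\Omega$ are \emph{not} controlled by $\|u\|_{\alpha,A,\omega}$; choosing the representatives $\psi_i$ with compact support inside $\Omega$ is precisely the device that makes those remainders harmless, since they are then bounded by $\|u\|_{\alpha,\Omega}$ alone. The only other delicate point is verifying that the hypotheses of Theorem~\ref{th1} are met so that the boundary norm $\|B_ju\|_{r'-m_j-1/2,\Gamma}$ is at our disposal — this is exactly why we first pass from $\alpha$ to a negative Sobolev order $r'$ and from $\omega$ to a power $\varrho^{\lambda}$ with $\lambda>-1/2$.
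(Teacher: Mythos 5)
Your argument is correct, but it is a genuinely different proof from the one in the paper. The paper disposes of the lemma in three lines by invoking Roitberg's modified scale: by definition of the dual norm, $|(u,w)_{\Omega}|\leq\|u\|_{r,(0),\Omega}\|w\|_{-r,\Omega}\leq\|u\|_{r,(2q),\Omega}\|w\|_{-r,\Omega}$, and Proposition~\ref{prop6.2} (whose proof rests on the isomorphism theorem for the spaces $H^{r,(2q)}(\Omega)$) says that adding $\|Au\|_{\omega,\Omega}$ to either of the norms $\|u\|_{r,(2q),\Omega}$ or $\|u\|_{r,\Omega}$ produces equivalent graph norms on $C^{\infty}(\overline{\Omega})$; the bound then follows from $H^{\alpha}(\Omega)\hookrightarrow H^{r}(\Omega)$, with a small detour to avoid half-integer $r$. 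You instead run a transposition argument in the spirit of Lions--Magenes: you solve $(A^{+},B^{+})v=(w-\sum_i c_i\psi_i,0)$ after correcting $w$ by compactly supported functions so as to land in the range, and then read the estimate off Green's formula term by term. Your reduction to $H^{r'}_{A,\lambda}(\Omega)$ with $r'<0$ and $\lambda>-1/2$ correctly places you in the scope of Proposition~\ref{prop6.1}/Theorem~\ref{th1} (which are proved before and independently of the lemma, so there is no circularity), the identification of the solvability condition for $(A^{+},B^{+})$ via the conjugated Green formula is right, and the three estimates (interior pairing via $\sigma_0(\omega)>-1/2$, boundary pairings via the trace bound $\|B_ju\|_{r'-m_j-1/2,\Gamma}\lesssim\|u\|_{r',A,\lambda}$ against $\|C^{+}_jv\|_{m_j+1/2-r',\Gamma}\lesssim\|v\|_{2q-r,\Omega}$, and the finite-rank correction paired with test functions supported inside $\Omega$) all check out. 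What your route buys is independence from the Roitberg scale and an explicit display of where the graph norm and the boundary traces of $u$ enter; what it costs is length and the full solvability/regularity theory of the adjoint problem. One cosmetic remark: the uniqueness of the extension over $H^{\alpha}_{A,\omega}(\Omega)$ needs density of $C^{\infty}(\overline{\Omega})$ there for a general $\omega$ with $\sigma_0(\omega)>-1/2$, which Theorem~\ref{th1} only gives for its specific $\eta$; the paper is equally silent on this point, and in its applications $\omega=\eta$, so this is not a defect of your argument relative to the original.
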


\begin{remark}
If $\sigma_0(\alpha)<-1/2$, then we may not replace $H^{\alpha}_{A,\eta}(\Omega)$ with the broader space $H^{\alpha}(\Omega)$ in the last sentence of this lemma. If $H^{\alpha}(\Omega)$ is a Sobolev space, this follows from  \cite[Theorems 4.8.2(c) and 4.3.2/1(c)]{Triebel95}.
\end{remark}

Using Lemma \ref{lema1} in the $\sigma_{0}(\varphi)\leq-2q$ case and the continuous embedding $H^{\varphi\rho^{2q}}_{A,\eta}(\Omega)\hookrightarrow L_{2}(\Omega)$ otherwise, we may split the source space of the operator \eqref{f15} into the direct sum of subspaces
\begin{equation}\label{sum1}
H^{\varphi\rho^{2q}}_{A,\eta}(\Omega)=N\dotplus\bigl\{u\in
H^{\varphi\rho^{2q}}_{A,\eta}(\Omega):\,(u,w)_\Omega=0\;\;\mbox{for every}\;\;w\in N\bigr\}.
\end{equation}
Indeed, if $\sigma_{0}(\varphi)\leq-2q$, then Lemma~\ref{lema1} implies the continuous embedding of $H^{\varphi\rho^{2q}}_{A,\eta}(\Omega)$ in the dual of $H^{-r}(\Omega)$ provided that $r<2q+\sigma_{0}(\varphi)$. Since $N\subset H^{\varphi\rho^{2q}}_{A,\eta}(\Omega)\cap H^{-r}(\Omega)$, we decompose this dual into the direct sum
\begin{equation*}
(H^{-r}(\Omega))'=N\dotplus\bigl\{u\in(H^{-r}(\Omega))': \,u=0\;\,\mbox{on}\;\,N\bigr\}.
\end{equation*}
Note that the codimension of the second summand equals $\dim N'=\dim N<\infty$. The restriction of this decomposition to $H^{\varphi\rho^{2q}}_{A,\eta}(\Omega)$ gives \eqref{sum1}. If $\sigma_{0}(\varphi)>-2q$, then \eqref{sum1} is a restriction of the orthogonal sum
\begin{equation*}
L_{2}(\Omega)=N\oplus\bigl\{u\in L_{2}(\Omega):\, (u,w)_\Omega=0\;\;\mbox{for every}\;\;w\in N \,\bigr\}.
\end{equation*}

Besides, we may split the target space of the operator \eqref{f15} as follows:
\begin{equation}\label{sum2b}
\mathcal{H}^{\eta,\varphi}(\Omega,\Gamma)=
\bigl\{(w,0,\ldots,0):w\in N^+\bigr\}\dotplus
\bigl\{(f,g)\in\mathcal{H}^{\eta,\varphi}(\Omega,\Gamma):
\mbox{\eqref{f10} is true}\bigr\}.
\end{equation}
Indeed, $\mathcal{H}^{\eta,\varphi}(\Omega,\Gamma)\hookrightarrow H^{\ell}(\Omega)\oplus(H^{-r}(\Gamma))^{q}=:\Xi$ for $\ell:=\min\{\lambda,0\}\in(-1/2,0]$ and $r\gg1$. The latter space admits the decomposition
\begin{equation*}
\Xi=\bigl\{(w,0,\ldots,0):w\in N^+\bigr\}\dotplus
\bigl\{(f,g)\in\Xi:\mbox{\eqref{f10} is true}\bigr\}
\end{equation*}
because the codimension of the second summand equals $\dim M'=\dim N^+<\infty$, where $M:=\{(v,C_{1}^{+}v,\ldots,C_{q}^{+}v):v\in N^+\}$. Here, we consider $M$ as a subspace of $\Psi:=H^{-\ell}(\Omega)\oplus(H^{r}(\Gamma))^{q}$ and note that $\Xi$ is the dual of $\Psi$ with respect to the form $(\cdot,\cdot)_{\Omega}+(\cdot,\cdot)_{\Gamma}+\cdots+
(\cdot,\cdot)_{\Gamma}$. Now \eqref{sum2b} is a restriction of the above decomposition of $\Xi$.

Let $P$ and $\mathcal{P}^+$ respectively denote the projectors of the spaces $H^{\varphi\rho^{2q}}_{A,\eta}(\Omega)$ and $\mathcal{H}^{\eta,\varphi}(\Omega,\Gamma)$ onto the second summand in \eqref{sum1} and \eqref{sum2b} parallel to the first. The mappings  defining these projectors do not depend on $\varphi$ and $\eta$.

\begin{theorem}\label{th2}
The restriction of the operator \eqref{f15} to the second summand in \eqref{sum1} is an isomorphism
\begin{equation}\label{isom}
(A,B):P\bigl(H^{\varphi\rho^{2q}}_{A,\eta}(\Omega)\bigr)\leftrightarrow
\mathcal{P}^+\bigl(\mathcal{H}^{\eta,\varphi}(\Omega,\Gamma)\bigr).
\end{equation}
\end{theorem}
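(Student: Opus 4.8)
The plan is to read the statement off from Theorem~\ref{th1} together with Banach's open mapping theorem. Write $T:=(A,B)\colon H^{\varphi\rho^{2q}}_{A,\eta}(\Omega)\to\mathcal{H}^{\eta,\varphi}(\Omega,\Gamma)$ for the bounded operator of Theorem~\ref{th1}. By that theorem $\ker T=N$ and the range $T\bigl(H^{\varphi\rho^{2q}}_{A,\eta}(\Omega)\bigr)$ equals $\{(f,g)\in\mathcal{H}^{\eta,\varphi}(\Omega,\Gamma):\text{\eqref{f10} holds}\}$, which is exactly the second summand in \eqref{sum2b}; hence $\mathcal{P}^{+}\bigl(\mathcal{H}^{\eta,\varphi}(\Omega,\Gamma)\bigr)=T\bigl(H^{\varphi\rho^{2q}}_{A,\eta}(\Omega)\bigr)$. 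Since $T$ is Fredholm, this range is closed, so $\mathcal{P}^{+}\bigl(\mathcal{H}^{\eta,\varphi}(\Omega,\Gamma)\bigr)$ is itself a Banach space in the norm induced from $\mathcal{H}^{\eta,\varphi}(\Omega,\Gamma)$.

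Next I would record that \eqref{sum1} is a topological direct sum, so that $P$ is a bounded projector and $P\bigl(H^{\varphi\rho^{2q}}_{A,\eta}(\Omega)\bigr)$ is closed. Indeed, the second summand in \eqref{sum1} is the intersection of the kernels of the finitely many functionals $u\mapsto(u,w)_{\Omega}$, with $w$ running over a basis of $N$; each such functional is continuous on $H^{\varphi\rho^{2q}}_{A,\eta}(\Omega)$ — by Lemma~\ref{lema1} when $\sigma_{0}(\varphi)\le-2q$, and by the continuous embedding $H^{\varphi\rho^{2q}}_{A,\eta}(\Omega)\hookrightarrow L_{2}(\Omega)$ otherwise — so this summand is closed. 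As $N$ is finite-dimensional and the second summand is closed, the algebraic direct sum \eqref{sum1} is topological, $P$ is bounded, and $P\bigl(H^{\varphi\rho^{2q}}_{A,\eta}(\Omega)\bigr)$ is a Banach space.

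It then remains to check that the restriction of $T$ to $P\bigl(H^{\varphi\rho^{2q}}_{A,\eta}(\Omega)\bigr)$ is a bounded bijection onto $\mathcal{P}^{+}\bigl(\mathcal{H}^{\eta,\varphi}(\Omega,\Gamma)\bigr)$. Boundedness is inherited from $T$. For injectivity, if $u\in P\bigl(H^{\varphi\rho^{2q}}_{A,\eta}(\Omega)\bigr)$ and $Tu=0$, then $u\in\ker T=N$, and $N$ meets the second summand of \eqref{sum1} only in $0$, so $u=0$. For surjectivity onto $T\bigl(H^{\varphi\rho^{2q}}_{A,\eta}(\Omega)\bigr)=\mathcal{P}^{+}\bigl(\mathcal{H}^{\eta,\varphi}(\Omega,\Gamma)\bigr)$: given $(f,g)$ in the range, pick $u$ with $Tu=(f,g)$, decompose $u=u_{0}+u_{1}$ along \eqref{sum1} with $u_{0}\in N$ and $u_{1}\in P\bigl(H^{\varphi\rho^{2q}}_{A,\eta}(\Omega)\bigr)$, and note $Tu_{1}=Tu-Tu_{0}=(f,g)$. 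A continuous bijection between Banach spaces is a topological isomorphism by the open mapping theorem, which is precisely the assertion \eqref{isom}.

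No step here is genuinely hard; the only points that require a little care are that \eqref{sum1} is a topological decomposition — that is, that the pairings $(\,\cdot\,,w)_{\Omega}$ with $w\in N$ are continuous on $H^{\varphi\rho^{2q}}_{A,\eta}(\Omega)$, which is where Lemma~\ref{lema1} enters — and that the Fredholm property of $T$ furnishes the closedness of $\mathcal{P}^{+}\bigl(\mathcal{H}^{\eta,\varphi}(\Omega,\Gamma)\bigr)$ that makes the open mapping theorem applicable.
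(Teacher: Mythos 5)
Your proposal is correct and follows essentially the same route as the paper: Theorem~\ref{th1} shows the restriction is a bounded bijection between the two (closed) summands, and the Banach inverse operator theorem upgrades this to an isomorphism. The supporting facts you carefully verify — that \eqref{sum1} and \eqref{sum2b} are topological decompositions, with Lemma~\ref{lema1} supplying continuity of the pairings $(\cdot,w)_{\Omega}$ when $\sigma_{0}(\varphi)\leq-2q$ — are exactly what the paper establishes in the discussion preceding the statement of the theorem, so nothing is missing.
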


Let us now focus on properties of generalized solutions to the elliptic problem \eqref{f1}, \eqref{f2} in the extended Sobolev scale. Our definition of such solutions is suggested by Theorem~\ref{th1}.
We put
\begin{equation*}
\mathcal{S}'(\Omega):=\bigl\{w\!\upharpoonright\Omega\!:
w\in\mathcal{S}'(\mathbb{R}^{n})\bigr\}=
\bigcup_{r\in\mathbb{R}}H^{r}(\Omega)
\end{equation*}
and note that $\mathcal{D}'(\Gamma)$ coincides with the union of all spaces $H^{r}(\Gamma)$ where $r\in\mathbb{R}$. If $u\in\mathcal{S}'(\Omega)$ satisfies \eqref{f1} for certain $f\in H^{-1/2+}(\Omega)$, then $u\in H^{s+2q}_{A,\lambda}$ for some $s<-1/2$ and $\lambda>\nobreak -1/2$. Hence, the vector $g:=Bu\in(\mathcal{D}'(\Gamma))^{q}$ is well defined by closure due to Theorem~\ref{th1} considered in the Sobolev case. Therefore, conditions
\eqref{f1} and \eqref{f2} make sense provided that $u\in\mathcal{S}'(\Omega)$, $f\in H^{-1/2+}(\Omega)$, and $g\in(\mathcal{D}'(\Gamma))^{q}$. If $u\in\mathcal{S}'(\Omega)$ satisfies these conditions, we will call $u$ a generalized solution to the problem \eqref{f1}, \eqref{f2}.

\begin{theorem}\label{th4.6}
Assume that a distribution $u\in\mathcal{S}'(\Omega)$ is a generalized solution to the elliptic problem \eqref{f1}, \eqref{f2} whose right-hand sides satisfy the conditions $f\in H^{\eta}(\Omega)$ and $g_j\in H^{\varphi\rho^{2q-m_j-1/2}}(\Gamma)$ for each $j\in\{1,\ldots,q\}$. Then $u\in H^{\varphi\rho^{2q}}(\Omega)$.
\end{theorem}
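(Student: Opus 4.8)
The plan is to deduce Theorem~\ref{th4.6} from Theorem~\ref{th1} by a bootstrap (elliptic regularity) argument. First I would observe that since $u\in\mathcal{S}'(\Omega)$ is a generalized solution with $f=Au\in H^{\eta}(\Omega)$, the definition of generalized solution gives $u\in H^{s+2q}_{A,\lambda}(\Omega)$ for some $s<-1/2$ and some $\lambda>-1/2$; replacing $\lambda$ by $\min\{\lambda,\sigma_0(\eta)\}$ if necessary we may assume $H^{\eta}(\Omega)\hookrightarrow H^{\lambda}(\Omega)$, so in fact $u\in H^{s+2q}_{A,\eta}(\Omega)$ with $s<\sigma_0(\varphi)$ (shrinking $s$ further if needed, using $\sigma_0(\varphi)\le-1/2$). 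Thus $u$ lies in the source space of the operator \eqref{f15} built from the function parameter $\psi\in\mathrm{OR}$, $\psi(t)\equiv t^{s}$, together with the chosen $\eta$ and $\lambda$; note $\eta$ satisfies the embedding \eqref{f13bis} relative to this coarser $\psi$ as well, since $\sigma_1(\psi)=s<-1/2$ puts us in the second branch, or it can be arranged directly.

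Next I would apply Theorem~\ref{th1} to this coarse parameter: the pair $(f,g)$ satisfies $f\in H^{\eta}(\Omega)$ and $g_j\in H^{\varphi\rho^{2q-m_j-1/2}}(\Gamma)\hookrightarrow H^{\psi\rho^{2q-m_j-1/2}}(\Gamma)$ (the embedding holding because $\sigma_1(\psi)=s<\sigma_0(\varphi)$ — actually one only needs $s\le\sigma_0(\varphi)$, which is automatic). Hence $(f,g)$ belongs to the target space $\mathcal{H}^{\eta,\psi}(\Omega,\Gamma)$ of the operator \eqref{f15} for the parameter $\psi$, and $(A,B)u=(f,g)$ by the closure procedure. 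Since that operator is Fredholm with range described by the orthogonality conditions \eqref{f10}, and since $(f,g)$ actually lies in the target space $\mathcal{H}^{\eta,\varphi}(\Omega,\Gamma)$ of the \emph{finer} operator, I want to transfer $u$ into the finer source space $H^{\varphi\rho^{2q}}_{A,\eta}(\Omega)$.

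The mechanism for this transfer is that the kernel $N$ and the range (given by \eqref{f10}) of the operator \eqref{f15} do not depend on the parameters. Concretely: by Theorem~\ref{th1} applied to the \emph{finer} parameter $\varphi$, the given $(f,g)\in\mathcal{H}^{\eta,\varphi}(\Omega,\Gamma)$ — which satisfies \eqref{f10}, because $u$ is a preimage under the coarse operator whose range is cut out by exactly these conditions — lies in the range of $(A,B):H^{\varphi\rho^{2q}}_{A,\eta}(\Omega)\to\mathcal{H}^{\eta,\varphi}(\Omega,\Gamma)$. So there is $u_1\in H^{\varphi\rho^{2q}}_{A,\eta}(\Omega)$ with $(A,B)u_1=(f,g)$. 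Then $(A,B)(u-u_1)=0$ in the coarse space $\mathcal{H}^{\eta,\psi}(\Omega,\Gamma)$, so $u-u_1\in N\subset C^{\infty}(\overline{\Omega})\subset H^{\varphi\rho^{2q}}_{A,\eta}(\Omega)$; therefore $u=u_1+(u-u_1)\in H^{\varphi\rho^{2q}}_{A,\eta}(\Omega)\hookrightarrow H^{\varphi\rho^{2q}}(\Omega)$, which is the assertion.

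The main obstacle is bookkeeping the Matuszewska indices so that all the spaces in play are genuinely nested: one must choose $s$ small enough that $s<\sigma_0(\varphi)$ (permissible since $\sigma_0(\varphi)\le-1/2$, so in particular $s$ can be taken $<-1/2$), choose $\lambda>-1/2$ small enough that $H^{\eta}(\Omega)\hookrightarrow H^{\lambda}(\Omega)$ and that the coarse operator \eqref{f15} is legitimately defined (i.e. the coarse analogue of condition \eqref{f4.5} holds — here $\sigma_1(\psi)=s<-1/2$ forces the second branch, so one needs $s<-1/2$, consistent with the above), and verify the chain of continuous embeddings $H^{\varphi\rho^{2q-m_j-1/2}}(\Gamma)\hookrightarrow H^{\psi\rho^{2q-m_j-1/2}}(\Gamma)$ and $H^{\varphi\rho^{2q}}_{A,\eta}(\Omega)\hookrightarrow H^{\psi\rho^{2q}}_{A,\eta}(\Omega)$, the last because $\sigma_1(\psi)=s\le\sigma_0(\varphi)\le\sigma_1(\varphi\rho^{2q})$ — rather, because $\psi\rho^{2q}$ is dominated by $\varphi\rho^{2q}$ near infinity. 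None of these steps is deep, but the argument only works if they are arranged coherently; the conceptual content is entirely the parameter-independence of kernel and range furnished by Theorem~\ref{th1}.
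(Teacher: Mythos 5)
Your proposal is correct and follows essentially the same route as the paper: place $u$ in a coarse space $H^{s+2q}_{A,\lambda}(\Omega)$ via the definition of generalized solution, use the parameter-independence of the kernel $N$ and of the range condition \eqref{f10} from Theorem~\ref{th1} to produce a preimage $v\in H^{\varphi\rho^{2q}}_{A,\eta}(\Omega)$ of $(f,g)$, and conclude from $u-v\in N\subset C^{\infty}(\overline{\Omega})$. The paper merely packages the transfer step as the single intersection identity $\mathcal{H}^{\eta,\varphi}(\Omega,\Gamma)\cap(A,B)\bigl(H^{s+2q}_{A,\lambda}(\Omega)\bigr)=(A,B)\bigl(H^{\varphi\rho^{2q}}_{A,\eta}(\Omega)\bigr)$, which is exactly the content of your range argument.
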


Let us formulate a local version of this theorem. Let $U$ be an arbitrary open subset of $\mathbb{R}^{n}$ such that
$\Omega_0:=\Omega\cap U\neq\emptyset$ and $\Gamma_{0}:=\Gamma\cap U\neq\emptyset$. Given $\alpha\in\mathrm{OR}$, we let $H^{\alpha}_{\mathrm{loc}}(\Omega_{0},\Gamma_{0})$ denote the linear space of all distributions $u\in\mathcal{S}'(\Omega)$ such that $\chi u\in H^{\alpha}(\Omega)$ for every function $\chi\in C^{\infty}(\overline{\Omega})$ satisfying $\mathrm{supp}\,\chi\subset\Omega_0\cup\Gamma_{0}$. Analogously,
$H^{\alpha}_{\mathrm{loc}}(\Gamma_{0})$ denotes the linear space of all distributions $h\in\mathcal{D}'(\Gamma)$ such that $\chi h\in H^{\alpha}(\Gamma)$ for every function $\chi\in C^{\infty}(\Gamma)$ satisfying $\mathrm{supp}\,\chi\subset\Gamma_{0}$.

\begin{theorem}\label{th4.7}
Assume that a distribution $u\in\mathcal{S}'(\Omega)$ is a generalized solution to the elliptic problem \eqref{f1}, \eqref{f2} whose right-hand sides satisfy the conditions
\begin{equation}\label{f4.14}
f\in H^\eta_{\mathrm{loc}}(\Omega_{0},\Gamma_{0})\cap H^{-1/2+}(\Omega)
\end{equation}
and
\begin{equation}\label{f4.15}
g_j\in H^{\varphi\rho^{2q-m_j-1/2}}_{\mathrm{loc}}(\Gamma_0)
\quad\mbox{for each}\quad j\in\{1,\ldots,q\}.
\end{equation}
Then $u\in H^{\varphi\rho^{2q}}_{\mathrm{loc}}(\Omega_{0},\Gamma_{0})$.
\end{theorem}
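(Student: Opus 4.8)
The plan is to reduce the local assertion to the global one (Theorem~\ref{th4.6}) by a standard localization-and-bootstrap argument, using a chain of cut-off functions together with the commutator structure of the differential operators $A$ and $B_j$. First I would fix a point $x_0\in\overline{\Omega_0}$ and choose a function $\chi\in C^\infty(\overline\Omega)$ with $\operatorname{supp}\chi\subset\Omega_0\cup\Gamma_0$; it suffices to show $\chi u\in H^{\varphi\rho^{2q}}(\Omega)$ for all such $\chi$. Since $u\in\mathcal S'(\Omega)$, we have $u\in H^{r}(\Omega)$ for some (possibly very negative) $r\in\mathbb R$, and by \eqref{f4.14} $u$ is a generalized solution with $f\in H^{-1/2+}(\Omega)$, so in particular $u\in H^{s+2q}_{A,\lambda}(\Omega)$ for suitable $s<-1/2$, $\lambda>-1/2$, and $Bu$ is well defined by Theorem~\ref{th1} in the Sobolev case. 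The idea is then to improve the local regularity of $u$ step by step: pick a finite increasing chain of cut-offs $\chi_0,\chi_1,\dots,\chi_k=\chi$ in $C^\infty(\overline\Omega)$ with $\operatorname{supp}\chi_i\subset\Omega_0\cup\Gamma_0$ and $\chi_{i+1}\equiv 1$ on a neighbourhood of $\operatorname{supp}\chi_i$ (so that $\chi_i\chi_{i+1}=\chi_i$), and at each stage gain a fixed amount of smoothness, e.g.\ $1$ order, until reaching the target order $\varphi\rho^{2q}$ after finitely many steps.

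The heart of each step is the commutation identity. For a cut-off $\psi$ one has $A(\psi u)=\psi\, Au + A'u$, where $A'$ is a differential operator of order $\le 2q-1$ whose coefficients are supported in $\operatorname{supp}\nabla\psi\subset\{\chi_{i+1}\equiv 1\}$; similarly $B_j(\psi u)=\psi\,B_j u + B_j'u$ with $\operatorname{ord}B_j'\le m_j-1$ and coefficients supported where $\chi_{i+1}\equiv1$. Hence, if by the inductive hypothesis $\chi_{i+1}u\in H^{\beta}(\Omega)$ for some $\beta\in\mathrm{OR}$ with $\beta\le\varphi\rho^{2q}$ (meaning $\beta/(\varphi\rho^{2q})$ bounded near infinity), then
\begin{align*}
A(\chi_i u)&=\chi_i f + A'(\chi_{i+1}u)\in H^{\min\{\eta,\ \beta\rho^{-1}\}}(\Omega),\\
B_j(\chi_i u)&=\chi_i g_j + B_j'(\chi_{i+1}u)\in H^{\min\{\varphi\rho^{2q-m_j-1/2},\ \beta\rho^{-m_j-1/2}\}}(\Gamma),
\end{align*}
where the $\Gamma$-membership uses the trace theorem on $\Gamma$ (valid since the relevant lower Matuszewska index exceeds $-1/2$ once $\beta$ is large enough, and at the very first step one uses instead that $Bu$ is well defined via $H^{s+2q}_{A,\lambda}$). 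Since $\chi_i u$ is compactly supported in $\Omega\cup\Gamma_0$ and globally lies in $H^{r}(\Omega)$, and its data under $(A,B)$ lie in $\mathcal H^{\eta',\varphi'}(\Omega,\Gamma)$ for appropriate $\varphi',\eta'\in\mathrm{OR}$ with regularity a fixed increment above that of $\chi_{i+1}u$, Theorem~\ref{th4.6} (applied with $\varphi'\varrho^{2q}$ in place of $\varphi\varrho^{2q}$) yields $\chi_i u\in H^{\varphi'\rho^{2q}}(\Omega)$, a genuine gain. Iterating finitely many times brings $\chi u=\chi_k u$ into $H^{\varphi\rho^{2q}}(\Omega)$. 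The analogous statement $g_j\in H^{\varphi\rho^{2q-m_j-1/2}}_{\mathrm{loc}}(\Gamma_0)$ from \eqref{f4.15} feeds directly into the boundary terms $\chi_i g_j$.

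The main obstacle is the very first (lowest-regularity) step, where $u$ is only in $H^{r}(\Omega)$ for a possibly very negative $r$ and the boundary traces $B_j u$ are defined only in the generalized sense of Theorem~\ref{th1}, not by the classical trace theorem. Here one cannot yet apply Theorem~\ref{th4.6} with an arbitrary target; instead one must first observe that, because $f\in H^{-1/2+}(\Omega)$, the distribution $\chi_0 u$ satisfies $A(\chi_0 u)\in H^{-1/2+}(\Omega)$ and $B_j(\chi_0 u)\in\mathcal D'(\Gamma)$ is well defined, so that $\chi_0 u$ is itself a compactly supported generalized solution of an elliptic problem with data of the required type; applying Theorem~\ref{th4.6} (in its global form, with the chosen $\varphi,\eta$) to $\chi_0 u$ directly gives $\chi_0 u\in H^{\varphi\rho^{2q}}(\Omega)$ in one stroke. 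In fact this observation shows the chain of cut-offs can be collapsed: it is enough to take a single $\chi$ and a single auxiliary $\chi_1\equiv1$ near $\operatorname{supp}\chi$, write $A(\chi u)=\chi f+A'(\chi_1 u)$ and $B_j(\chi u)=\chi g_j+B_j'(\chi_1 u)$, and note that $A'(\chi_1 u)\in H^{\eta}_{\mathrm{loc}}$-type space while $B_j'(\chi_1 u)$ has the right boundary regularity — but since $\chi_1 u$ is a priori only in $H^{r}(\Omega)$, one does still need one bootstrap to first upgrade $\chi_1 u$, so the finite chain is the cleanest route. The remaining steps are routine: verifying the Matuszewska-index bookkeeping for the functions $\beta\rho^{-1}$, $\beta\rho^{-m_j-1/2}$ etc., checking that at each stage the lower index of the relevant $\eta'$ exceeds $-1/2$ so Theorem~\ref{th4.6} applies, and confirming that finitely many increments of size $1$ suffice to pass from $\sigma_1$-order $r$ to order $\sigma_1(\varphi)+2q$.
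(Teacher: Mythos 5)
Your plan founders at precisely the point the paper singles out as the obstruction. The whole bootstrap rests on treating $\chi_i u$ as a generalized solution of an elliptic problem to which Theorem~\ref{th4.6} can be applied; for that you need $A(\chi_i u)\in H^{-1/2+}(\Omega)$, since otherwise $B(\chi_i u)$ is not even defined (the boundary operators only extend by continuity from $H^{\alpha}_{A,\omega}(\Omega)$ with $\sigma_0(\omega)>-1/2$). But $A(\chi_i u)=\chi_i f+A'(\chi_{i+1}u)$ with $\operatorname{ord}A'\leq 2q-1$, and the commutator term is only as regular as $u$ itself allows: if $u\in H^{r}(\Omega)$ with $r$ very negative, then $A'(\chi_{i+1}u)$ lies only in $H^{r-2q+1}(\Omega)$, far below $H^{-1/2+}(\Omega)$. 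Your resolution of the ``first step'' — the assertion that $f\in H^{-1/2+}(\Omega)$ implies $A(\chi_0u)\in H^{-1/2+}(\Omega)$ — is simply false for the same reason, and the same failure recurs at every intermediate step until the accumulated regularity of $\chi_{i+1}u$ exceeds order $2q-3/2$. Equivalently: the space $H^{s+2q}_{A,\ell}(\Omega)$ is not stable under multiplication by functions from $C^{\infty}(\overline{\Omega})$, which is exactly why the usual localization-and-commutator argument cannot be run here; the paper states this explicitly at the start of its proof.

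The paper's actual route avoids multiplying $u$ by cut-offs altogether. One localizes the \emph{data}: with $\zeta=1$ near $\operatorname{supp}\chi$, one has $\zeta(f,g)\in\mathcal{H}^{\eta,\varphi}(\Omega,\Gamma)$, so by Theorem~\ref{th2} there is a global $v\in H^{\varphi\rho^{2q}}_{A,\eta}(\Omega)$ with $(A,B)v=\mathcal{P}^{+}(\zeta(f,g))$. The difference $w=u-v$ then has data $(A,B)w$ that are $C^{\infty}\times\{0\}^{q}$ near $\operatorname{supp}\chi$, and after embedding $H^{s+2q}_{A,\ell}(\Omega)$ into Roitberg's modified space $H^{s+2q,(2q)}(\Omega)$ (which \emph{is} stable under cut-offs), Roitberg's local regularity theorem gives $\chi w\in C^{\infty}(\overline{\Omega})$; hence $\chi u=\chi v+\chi w\in H^{\varphi\rho^{2q}}(\Omega)$. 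If you want to salvage a localization argument, you would have to carry it out in the Roitberg scale $H^{r,(2q)}(\Omega)$ rather than in $H^{\alpha}_{A,\eta}(\Omega)$; as written, your induction cannot start and cannot propagate.
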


\begin{remark}\label{rem4.8}
The given definition of $H^{\alpha}_{\mathrm{loc}}(\Omega_{0},\Gamma_{0})$ is also applicable to the $\Gamma_{0}=\emptyset$ case. As to condition \eqref{f4.14}, note that if $u\in\mathcal{D}'(\Omega)$ and $f:=Au\in H^\eta_{\mathrm{loc}}(\Omega_{0},\emptyset)$, then $u\in H^{\eta\varrho^{2q}}_{\mathrm{loc}}(\Omega_{0},\emptyset)$ according to
\cite[Theorem 7.4.1]{Hermander63}.
\end{remark}

As an application of Theorem~\ref{th4.7}, we give sufficient conditions for generalized derivatives (of a given order) of the solution $u$ to be continuous on $\Omega_{0}\cup\Gamma_{0}$. Assuming $0\leq p\in\mathbb{Z}$ and $u\in\mathcal{D}'(\Omega)$, we write $u\in C^{p}(\Omega_{0}\cup\Gamma_{0})$ if there exists a function $u_0\in C^{p}(\Omega_{0}\cup\Gamma_{0})$ such that
\begin{equation}\label{def-C^p}
\bigl(v\in C^{\infty}_{0}(\Omega),\;
\mathrm{supp}\,v\subset\Omega_{0}\bigr)\Longrightarrow
(u,v)_{\Omega}=\int\limits_{\Omega_{0}}
u_0(x)\overline{v(x)}dx;
\end{equation}
here, $(u,v)_{\Omega}$ is the value of the distribution $u$ at the test function $v$.

\begin{theorem}\label{th4.9}
Let $0\leq p\in\mathbb{Z}$,
\begin{equation}\label{int-cond}
\int\limits_1^{\infty}t^{2p-4q+n-1}\varphi^{-2}(t)dt<\infty,
\end{equation}
and assume that a distribution $u\in\mathcal{S}'(\Omega)$ satisfies the hypotheses of Theorem~$\ref{th4.7}$. Then $u\in C^{p}(\Omega_{0}\cup\Gamma_{0})$.
\end{theorem}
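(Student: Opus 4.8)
The plan is to combine the local regularity result of Theorem~\ref{th4.7} with H\"ormander's embedding theorem \eqref{Hermander-embedding}, localized on $\Gamma$ via the charts. First I would invoke Theorem~\ref{th4.7}: the hypotheses are precisely \eqref{f4.14} and \eqref{f4.15}, so we may conclude $u\in H^{\varphi\varrho^{2q}}_{\mathrm{loc}}(\Omega_{0},\Gamma_{0})$. By definition of this space, for every $\chi\in C^{\infty}(\overline{\Omega})$ with $\mathrm{supp}\,\chi\subset\Omega_0\cup\Gamma_0$ we have $\chi u\in H^{\varphi\varrho^{2q}}(\Omega)$. The goal is now purely a local smoothness statement about functions in the space $H^{\varphi\varrho^{2q}}(\Omega)$, supported in a relatively open subset of $\overline\Omega$, under the integral condition \eqref{int-cond}.

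Next I would set $\alpha:=\varphi\varrho^{2q}$ and check that condition \eqref{int-cond} is exactly H\"ormander's condition $\int_1^\infty t^{2p+n-1}\alpha^{-2}(t)\,dt<\infty$ appearing on the left of \eqref{Hermander-embedding}: indeed $\alpha^{-2}(t)=t^{-4q}\varphi^{-2}(t)$, so $t^{2p+n-1}\alpha^{-2}(t)=t^{2p-4q+n-1}\varphi^{-2}(t)$, and the two integrals coincide. The subtlety is that \eqref{Hermander-embedding} as stated gives $C^p$-regularity on $\mathbb{R}^n$ for compactly supported elements of $H^\alpha(\mathbb{R}^n)$, whereas here we work on $\Omega$ and want regularity up to the part $\Gamma_0$ of the boundary. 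To bridge this, I would argue in two parts. On the interior, pick $\chi\in C^\infty(\overline\Omega)$ with $\mathrm{supp}\,\chi\subset\Omega_0$: then $\chi u\in H^\alpha(\Omega)$ extends (by zero) to an element of $H^\alpha(\mathbb{R}^n)$ with support in $\Omega_0$, and \eqref{Hermander-embedding} gives $\chi u\in C^p(\mathbb{R}^n)$, hence $u\in C^p(\Omega_0)$ in the sense of \eqref{def-C^p}. To reach points of $\Gamma_0$, I would use the $C^\infty$-diffeomorphism flattening the boundary near a point $x_0\in\Gamma_0$, transfer $\chi u$ (with $\chi$ supported in a small neighbourhood of $x_0$ inside $\Omega_0\cup\Gamma_0$) to a half-space, extend it across the boundary to an element of $H^\alpha(\mathbb{R}^n)$ supported in a small set (using the standard bounded extension operator for $H^\alpha$, whose existence for $\alpha\in\mathrm{OR}$ is part of the definition of $H^\alpha(\Omega)$ as a factor space and is used throughout the paper), and apply \eqref{Hermander-embedding} to obtain a $C^p$-representative near $x_0$. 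Since diffeomorphisms and multiplication by $C^\infty$ cut-offs preserve both $H^\alpha$ and $C^p$, patching these local representatives with a partition of unity subordinate to a cover of $\Omega_0\cup\Gamma_0$ produces the required $u_0\in C^p(\Omega_0\cup\Gamma_0)$ satisfying \eqref{def-C^p}.

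The main obstacle is the boundary transfer step: one must be careful that the extension operator for $H^\alpha$ genuinely commutes (up to lower-order, i.e.\ irrelevant for the $C^p$-conclusion) with the diffeomorphism and with multiplication by smooth cut-offs, and that after extension the support is small enough that \eqref{Hermander-embedding} with $U$ a bounded neighbourhood applies. A cleaner route, which I would actually follow, is to observe that $C^p(\Omega_0\cup\Gamma_0)$ regularity is a local property and that, by covering $\Omega_0\cup\Gamma_0$ with coordinate patches in which $\Omega$ is a half-space, it suffices to prove: if $w\in H^\alpha(\mathbb{R}^n_+)$ has support in a bounded set and $\alpha$ satisfies $\int_1^\infty t^{2p+n-1}\alpha^{-2}\,dt<\infty$, then $w\in C^p(\overline{\mathbb{R}^n_+})$; and this follows from \eqref{Hermander-embedding} applied to any bounded $H^\alpha(\mathbb{R}^n)$-extension of $w$. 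The remaining verifications — that the partition-of-unity pieces reassemble to a single $C^p$ function and that this function represents $u$ in the sense \eqref{def-C^p} — are routine and I would only indicate them.
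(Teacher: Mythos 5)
Your proposal is correct, and its core --- Theorem~\ref{th4.7} followed by H\"ormander's embedding \eqref{Hermander-embedding} with $\alpha:=\varphi\varrho^{2q}$, together with the observation that \eqref{int-cond} is exactly the integral condition on the left of \eqref{Hermander-embedding} --- is precisely the paper's argument. Where you diverge is in the passage from $\chi u\in H^{\varphi\varrho^{2q}}(\Omega)$ to a $C^{p}$ representative: you flatten the boundary in local charts, invoke an extension operator on a half-space, and then worry about supports and about commutation with diffeomorphisms and cut-offs. None of that machinery is needed. Since $H^{\alpha}(\Omega)$ is \emph{by definition} the space of restrictions to $\Omega$ of elements of $H^{\alpha}(\mathbb{R}^{n})$, the inclusion $\chi_{\varepsilon}u\in H^{\varphi\varrho^{2q}}(\Omega)$ (for a cut-off $\chi_{\varepsilon}$ supported in $\Omega_{0}\cup\Gamma_{0}$ and equal to $1$ on a slightly shrunken set $\Omega_{\varepsilon}\cup\Gamma_{\varepsilon}$) already hands you a global extension $w_{\varepsilon}\in H^{\varphi\varrho^{2q}}(\mathbb{R}^{n})$ with $w_{\varepsilon}=u$ on $\Omega_{\varepsilon}$; applying \eqref{Hermander-embedding} with $U=\mathbb{R}^{n}$ (the paper explicitly allows this case) gives $w_{\varepsilon}\in C^{p}(\mathbb{R}^{n})$, and its restriction to $\Omega_{\varepsilon}\cup\Gamma_{\varepsilon}$ is the desired local representative. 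Your concern that the extension must have small support is therefore moot, and the partition-of-unity patching is replaced by the simpler remark that $w_{\delta}=w_{\varepsilon}$ on $\Omega_{\varepsilon}\cup\Gamma_{\varepsilon}$ whenever $0<\delta<\varepsilon$, so the functions glue to a single $u_{0}\in C^{p}(\Omega_{0}\cup\Gamma_{0})$ satisfying \eqref{def-C^p}. Both routes reach the goal; the paper's is just leaner because it exploits the factor-space definition of $H^{\alpha}(\Omega)$ instead of reconstructing an extension by hand.
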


\begin{remark}\label{rem4.10}
Condition \eqref{int-cond} is sharp in Theorem~\ref{th4.9}. Namely, let $0\leq p\in\mathbb{Z}$; it follows then from the implication
\begin{equation}\label{implication}
\begin{aligned}
&\bigl(\mbox{a distribution $u\in\mathcal{S}'(\Omega)$ satisfies the hypotheses of Theorem~\ref{th4.7}}\bigr)\\
&\Longrightarrow\;
u\in C^{p}(\Omega_{0}\cup\Gamma_{0})
\end{aligned}
\end{equation}
that $\varphi$ satisfies \eqref{int-cond}. This will be shown in Section~\ref{sec6}
\end{remark}

\begin{remark}\label{rem4.11}
Theorems \ref{th2}--\ref{th4.7} and \ref{th4.9} remain valid for every function parameter $\varphi\in\mathrm{OR}$ subject to $\sigma_0(\varphi)>-1/2$ provided that we put $\eta:=\varphi$. In this case, they relates to Proposition~\ref{prop1} and are demonstrated in the same way as the corresponding proofs given in Section~\ref{sec6}, we taking into account that $H^{\varphi\varrho^{2q}}_{A,\varphi}(\Omega)=
H^{\varphi\varrho^{2q}}(\Omega)$ up to equivalence of norms and supposing that $s<-1/2$ in the proofs. These theorems are proved in \cite{AnopMurach14UMJ} in the case indicated, with the assumption $u\in H^{2q-1/2+}(\Omega)$ being made instead of $u\in\mathcal{S}'(\Omega)$.
\end{remark}

We supplement Theorems \ref{th4.6} and \ref{th4.7} with \textit{a priori} estimates of the solution $u$. Let $\|(f,g)\|_{\eta,\varphi,\Omega,\Gamma}$ denote the norm of a vector $(f,g)=(f,g_1,\ldots,g_q)$ in the Hilbert space
$\mathcal{H}^{\eta,\varphi}(\Omega,\Gamma)$ defined in \eqref{f15}.

\begin{theorem}\label{th4.12}
Assume that a distribution $u\in\mathcal{S}'(\Omega)$ satisfies the hypotheses of Theorem~$\ref{th4.6}$, and choose a number $\ell>0$ arbitrarily. Then
\begin{equation}\label{f4.20}
\|u\|_{\varphi\rho^{2q},\Omega}\leq c\,\bigl(\|(f,g)\|_{\eta,\varphi,\Omega,\Gamma}+
\|u\|_{\varphi\rho^{2q-\ell},\Omega}\bigr)
\end{equation}
for some number $c>0$ that does not depend on $u$ and $(f,g)$.
\end{theorem}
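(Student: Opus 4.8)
The plan is to derive the \textit{a priori} estimate \eqref{f4.20} from the Fredholm property established in Theorem~\ref{th1}, combined with a compactness argument. First I would recall that, by Theorem~\ref{th1}, the hypotheses on $u$ (namely $f=Au\in H^{\eta}(\Omega)$ and $g_j=B_ju\in H^{\varphi\rho^{2q-m_j-1/2}}(\Gamma)$) guarantee that $u\in H^{\varphi\rho^{2q}}(\Omega)$ and in fact $u\in H^{\varphi\rho^{2q}}_{A,\eta}(\Omega)$, with $(A,B)u=(f,g)$. Since $(A,B)$ is a bounded operator on $H^{\varphi\rho^{2q}}_{A,\eta}(\Omega)$, we have $\|u\|_{\varphi\rho^{2q},\Omega}\le\|u\|_{\varphi\rho^{2q},A,\eta}$ and it suffices to estimate the graph norm $\|u\|_{\varphi\rho^{2q},A,\eta}^2=\|u\|_{\varphi\rho^{2q},\Omega}^2+\|(f,g)\|_{\eta,\varphi,\Omega,\Gamma}^2-\|g\|^2+\dots$; more precisely it suffices to bound $\|u\|_{\varphi\rho^{2q},A,\eta}$ by the right-hand side of \eqref{f4.20}, since $\|(Au,Bu)\|$ controls $\|Au\|_{\eta,\Omega}$.

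The core estimate is the standard Peetre-type lemma applied to the Fredholm operator $(A,B)\colon H^{\varphi\rho^{2q}}_{A,\eta}(\Omega)\to\mathcal{H}^{\eta,\varphi}(\Omega,\Gamma)$. Because this operator has finite-dimensional kernel $N$ and closed range, there is a constant $c_1>0$ with
\begin{equation*}
\|u\|_{\varphi\rho^{2q},A,\eta}\le c_1\bigl(\|(A,B)u\|_{\eta,\varphi,\Omega,\Gamma}+\|Qu\|_{\varphi\rho^{2q},\Omega}\bigr)
\end{equation*}
for all $u\in H^{\varphi\rho^{2q}}_{A,\eta}(\Omega)$, where $Q$ is the (finite-rank) orthoprojector onto $N$ in $L_2(\Omega)$; this is the abstract fact that for a bounded operator with finite-dimensional kernel and closed range, the graph norm is controlled by the image norm plus any norm that is injective on the kernel. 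Here I use that $N\subset C^\infty(\overline\Omega)$ is finite-dimensional, so on $N$ all the norms $\|\cdot\|_{\varphi\rho^{2q},\Omega}$ and $\|\cdot\|_{\varphi\rho^{2q-\ell},\Omega}$ are equivalent; hence $\|Qu\|_{\varphi\rho^{2q},\Omega}\le c_2\|Qu\|_{\varphi\rho^{2q-\ell},\Omega}$.

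It then remains to replace $\|Qu\|_{\varphi\rho^{2q-\ell},\Omega}$ by $\|u\|_{\varphi\rho^{2q-\ell},\Omega}$, i.e.\ to show $\|Qu\|_{\varphi\rho^{2q-\ell},\Omega}\le c_3\|u\|_{\varphi\rho^{2q-\ell},\Omega}$. This is where the main technical point lies: $Q$ is defined via the $L_2(\Omega)$-inner product, and one must check that the pairing $(u,w)_\Omega$ with $w\in N\subset C^\infty(\overline\Omega)$ extends continuously to $u\in H^{\varphi\rho^{2q-\ell}}(\Omega)$ — which holds because $w\in C^\infty(\overline\Omega)$ so $(u,w)_\Omega=(u,\overline w)$ is just the action of a distribution on a test function and is bounded by $\|u\|_{\varphi\rho^{2q-\ell},\Omega}\|w\|_{\varphi^{-1}\rho^{-(2q-\ell)},\Omega}$, the latter factor being finite since $w$ is smooth — and therefore $Q$, having finite-dimensional range spanned by fixed smooth functions, is bounded as an operator $H^{\varphi\rho^{2q-\ell}}(\Omega)\to N\subset H^{\varphi\rho^{2q-\ell}}(\Omega)$, with $\|Qu\|_{\varphi\rho^{2q-\ell},\Omega}\le c_3\|u\|_{\varphi\rho^{2q-\ell},\Omega}$. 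Chaining the three inequalities and absorbing $\|(A,B)u\|=\|(f,g)\|_{\eta,\varphi,\Omega,\Gamma}$ gives \eqref{f4.20}. The only subtlety to be careful about is that Theorem~\ref{th1} is needed to know that $u$ actually lies in $H^{\varphi\rho^{2q}}_{A,\eta}(\Omega)$ in the first place (not merely in $H^{\varphi\rho^{2q}}(\Omega)$), so that the Fredholm estimate for $(A,B)$ on that space is applicable; this follows because $f\in H^\eta(\Omega)$ means $Au\in H^\eta(\Omega)$ by definition of a generalized solution.
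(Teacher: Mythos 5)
Your overall strategy coincides with the paper's direct argument: split $u$ into its component in $N$ and a complementary part via the decomposition \eqref{sum1}, control the complementary part through the isomorphism of Theorem~\ref{th2} (equivalently, the fact that $(A,B)$ is bounded below on a complement of its kernel), and use the equivalence of all norms on the finite-dimensional space $N$. (The paper also observes that the estimate follows at once from Peetre's lemma applied to the Fredholm operator \eqref{f15}, which is close to your ``Peetre-type'' formulation.) The first two links of your chain are sound, as is the preliminary observation that $u\in H^{\varphi\rho^{2q}}_{A,\eta}(\Omega)$ — though the regularity $u\in H^{\varphi\rho^{2q}}(\Omega)$ is the content of Theorem~\ref{th4.6} rather than of Theorem~\ref{th1} alone.

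The gap is in your last step, where you claim $\|Qu\|_{\varphi\rho^{2q-\ell},\Omega}\le c_3\|u\|_{\varphi\rho^{2q-\ell},\Omega}$ on the grounds that, for fixed $w\in N\subset C^{\infty}(\overline{\Omega})$, the form $(u,w)_{\Omega}$ is bounded by $\|u\|_{\varphi\rho^{2q-\ell},\Omega}\,\|w\|_{\varphi^{-1}\rho^{-(2q-\ell)},\Omega}$. This duality fails on the restriction spaces $H^{\alpha}(\Omega)$ as soon as $\sigma_0(\alpha)<-1/2$: the dual of $H^{\alpha}(\Omega)$ with respect to $(\cdot,\cdot)_{\Omega}$ consists of distributions supported in $\overline{\Omega}$, and the zero-extension of a smooth function not vanishing on $\Gamma$ has regularity only below $1/2$. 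This is exactly why Lemma~\ref{lema1} is formulated for $H^{\alpha}_{A,\omega}(\Omega)$ and why the remark following it stresses that one may not replace that space by $H^{\alpha}(\Omega)$. Since $\ell>0$ is arbitrary, $\sigma_0(\varphi\rho^{2q-\ell})=\sigma_0(\varphi)+2q-\ell$ can drop below $-1/2$, and then $Q$ is not a bounded operator on $H^{\varphi\rho^{2q-\ell}}(\Omega)$, so the inequality you need is not available as an operator bound. The repair is elementary and is what the paper does: avoid any continuity claim for $Q$ in the weak norm and instead write
$\|Qu\|_{\varphi\rho^{2q-\ell},\Omega}\le\|u\|_{\varphi\rho^{2q-\ell},\Omega}
+\|(I-Q)u\|_{\varphi\rho^{2q-\ell},\Omega}
\le\|u\|_{\varphi\rho^{2q-\ell},\Omega}+\|(I-Q)u\|_{\varphi\rho^{2q},\Omega}$,
the last term being already bounded by $c_1\|(f,g)\|_{\eta,\varphi,\Omega,\Gamma}$ from your second step. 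With this substitution your proof closes.
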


Note, if $N=\{0\}$, we may remove the last summand on the right of \eqref{f4.20} due to the Banach theorem on inverse operator.

A local version of this result is stated as follows:

\begin{theorem}\label{th4.13}
Assume that a distribution $u\in\mathcal{S}'(\Omega)$ satisfies the hypotheses of Theorem~$\ref{th4.7}$. We arbitrarily choose a number $\ell>0$ and functions $\chi,\zeta\in C^{\infty}(\overline{\Omega})$ such that $\mathrm{supp}\,\chi\subset\mathrm{supp}\,\zeta\subset
\Omega_{0}\cup\Gamma_{0}$ and that $\zeta=1$ in a neighbourhood of $\mathrm{supp}\,\chi$. Then
\begin{equation}\label{f4.21}
\|\chi u\|_{\varphi\rho^{2q},\Omega}\leq c\,\bigl(\|\zeta(f,g)\|_{\eta,\varphi,\Omega,\Gamma}+
\|\zeta u\|_{\varphi\rho^{2q-\ell},\Omega}\bigr)
\end{equation}
for some number $c>0$ that does not depend on $u$ and $(f,g)$.
\end{theorem}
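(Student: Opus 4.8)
The plan is to reduce the local estimate \eqref{f4.21} to the global estimate \eqref{f4.20} via a standard localization-and-commutator argument. First I would set $v:=\zeta u$. Since $\zeta\in C^{\infty}(\overline{\Omega})$ has $\mathrm{supp}\,\zeta\subset\Omega_0\cup\Gamma_0$ and $u\in H^{\varphi\rho^{2q}}_{\mathrm{loc}}(\Omega_0,\Gamma_0)$ by Theorem~\ref{th4.7}, we have $v\in H^{\varphi\rho^{2q}}(\Omega)$; moreover $v$ is itself a generalized solution of a problem of the same type, namely $Av = \widetilde f$ in $\Omega$ and $B_j v = \widetilde g_j$ on $\Gamma$, where $\widetilde f := \zeta f + [A,\zeta]u$ and $\widetilde g_j := \zeta g_j + [B_j,\zeta]u$. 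Here the commutators $[A,\zeta]$ and $[B_j,\zeta]$ are differential operators of orders $2q-1$ and $m_j-1$ respectively, with coefficients supported in $\mathrm{supp}\,\zeta$. The key point is that these commutators gain one derivative: using $\zeta=1$ near $\mathrm{supp}\,\chi$ one can insert an auxiliary cut-off — but more directly, since $[A,\zeta]$ has order $\le 2q-1$ and maps $H^{\varphi\rho^{2q}}_{\mathrm{loc}}(\Omega_0,\Gamma_0)$-functions multiplied by a cut-off into $H^{\varphi\rho}(\Omega)\hookrightarrow H^{\eta}(\Omega)$ once we know $u$ has the local regularity, the right-hand sides lie in the correct target space $\mathcal{H}^{\eta,\varphi}(\Omega,\Gamma)$. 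However, to get a genuine estimate (not just membership) one must be careful that the commutator terms are controlled by $\|\zeta u\|$ with loss of derivatives, not by $\|v\|$ with full order.

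The cleaner route, which I would follow, is an iteration over a chain of cut-offs. Choose functions $\chi=\chi_0,\chi_1,\dots,\chi_M=\zeta'$ in $C^\infty(\overline\Omega)$ with $\mathrm{supp}\,\chi_k\subset\Omega_0\cup\Gamma_0$, each $\chi_{k+1}=1$ on a neighbourhood of $\mathrm{supp}\,\chi_k$, and $\zeta=1$ on a neighbourhood of $\mathrm{supp}\,\zeta'$. Apply the global a priori estimate \eqref{f4.20} of Theorem~\ref{th4.12} to $v_k:=\chi_k u$, which gives
\begin{equation*}
\|\chi_k u\|_{\varphi\rho^{2q},\Omega}\le c\bigl(\|(A,B)(\chi_k u)\|_{\eta,\varphi,\Omega,\Gamma}+\|\chi_k u\|_{\varphi\rho^{2q-\ell},\Omega}\bigr).
\end{equation*}
Expanding $(A,B)(\chi_k u)$ through commutators and using that $\chi_{k+1}=1$ near $\mathrm{supp}\,\chi_k$ (so multiplication by $\chi_k$ and by $\chi_k\chi_{k+1}$ coincide on the relevant supports), the commutator contributions are bounded by $c\|\chi_{k+1}u\|_{\varphi\rho^{2q-1},\Omega}$, and the genuine right-hand side contributions are bounded by $c\|\zeta'(f,g)\|_{\eta,\varphi,\Omega,\Gamma}\le c\|\zeta(f,g)\|_{\eta,\varphi,\Omega,\Gamma}$. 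Thus
\begin{equation*}
\|\chi_k u\|_{\varphi\rho^{2q},\Omega}\le c\bigl(\|\zeta(f,g)\|_{\eta,\varphi,\Omega,\Gamma}+\|\chi_{k+1}u\|_{\varphi\rho^{2q-1},\Omega}\bigr).
\end{equation*}
Iterating this $M$ times lowers the order on the right from $2q$ to $2q-M$; choosing $M$ with $M\ge\ell$ (and using the embedding $H^{\varphi\rho^{2q-M}}\hookrightarrow H^{\varphi\rho^{2q-\ell}}$ together with $\|\zeta'u\|_{\varphi\rho^{2q-\ell},\Omega}\le c\|\zeta u\|_{\varphi\rho^{2q-\ell},\Omega}$) yields \eqref{f4.21}.

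The main obstacle I expect is the bookkeeping for the commutator terms in the $H^{\eta}(\Omega)$-component rather than in the boundary components: one must verify that $[A,\zeta]\chi_{k+1}u\in H^{\eta}(\Omega)$ with the estimate controlled by $\|\chi_{k+1}u\|_{\varphi\rho^{2q-1}}$, which requires $\varphi\rho^{2q-1}$ to dominate $\eta$ up to a constant near infinity. This does hold: from \eqref{f13bis} we have $H^{\eta}(\Omega)\hookrightarrow H^{\varphi}(\Omega)$, i.e. $\varphi(t)/\eta(t)\le c$, hence $\varphi(t)\rho(t)^{2q-1}/\eta(t)\le c\,t^{2q-1}$, so a function of regularity index $\varphi\rho^{2q-1}$ indeed embeds into $H^{\eta}(\Omega)$ as soon as $\sigma_1(\varphi)+2q-1\ge\sigma_1(\eta)$, which follows from the explicit formula $\sigma_j(\eta)=(1-\theta)s_1+\theta\sigma_j(\varphi)$ together with $s_1>\sigma_1(\varphi)$ and $2q-1\ge0$. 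A secondary point is that all the constants $c$ must be chosen uniformly in $u$ and $(f,g)$, which is automatic since at each step $c$ depends only on the fixed cut-off functions, on the coefficients of $A$ and $B_j$, and on the constant from Theorem~\ref{th4.12}, none of which involve $u$; one also uses here that the decomposition-free form of \eqref{f4.20} already contains the lower-order term $\|u\|_{\varphi\rho^{2q-\ell},\Omega}$, so no invertibility of $(A,B)$ is needed.
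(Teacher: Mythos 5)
Your overall strategy --- localize, apply the global estimate of Theorem~\ref{th4.12} to $\chi_k u$, and absorb the commutator terms into a lower-order norm --- is precisely the ``usual reasoning'' that the paper explicitly rules out at the start of its proof of this theorem, and the obstacle you flag at the end is fatal rather than a matter of bookkeeping. The problem sits in the $H^{\eta}(\Omega)$-component. Writing $A(\chi_k u)=\chi_k f+[A,\chi_k]u$ with $[A,\chi_k]$ of order $\le 2q-1$ and coefficients supported where $\chi_{k+1}=1$, the commutator term has only the regularity $[A,\chi_k](\chi_{k+1}u)\in H^{\varphi\rho}(\Omega)$, since $\chi_{k+1}u\in H^{\varphi\rho^{2q}}(\Omega)$. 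Your scheme needs $H^{\varphi\rho}(\Omega)\hookrightarrow H^{\eta}(\Omega)$ --- indeed, merely to place $\chi_k u$ under the hypotheses of Theorem~\ref{th4.12} you need $A(\chi_k u)\in H^{\eta}(\Omega)$. But $\sigma_0(\varphi\rho)=\sigma_0(\varphi)+1$ can lie far below $-1/2<\sigma_0(\eta)$ (take $\sigma_0(\varphi)=-10q$, say), and then no such embedding holds: the hypothesis $Au\in H^{-1/2+}(\Omega)$ does not transfer to $A(\chi_k u)$. Your attempted verification gets the inequality backwards: the embedding $H^{\varphi\rho^{2q-1}}(\Omega)\hookrightarrow H^{\eta}(\Omega)$ requires $\eta(t)\le c\,\varphi(t)\,t^{2q-1}$, i.e.\ a \emph{lower} bound on $\varphi\rho^{2q-1}/\eta$, whereas from $H^{\eta}(\Omega)\hookrightarrow H^{\varphi}(\Omega)$ (the embedding in the opposite direction) you only derive an upper bound; moreover the index condition you then state involves $\sigma_1(\varphi)$ where at best $\sigma_0(\varphi)$ would be relevant.

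The paper's proof circumvents exactly this by running the commutator induction not in the scale $H^{\eta}(\Omega)\oplus\cdots$ but in interpolated Roitberg spaces, using the Fredholm operators $(A,B)\colon X_{2q+r}\to Y_{r}\oplus Z_{r}$, which exist for all real orders without any restriction of the type $\sigma_0>-1/2$; there the commutators genuinely lose one order ($A'\colon X_{2q-p-1}\to Y_{-p}$) and Peetre's lemma applies at every step. Only afterwards (Steps 2--4) are the Roitberg norms compared with $\|\cdot\|_{\varphi\rho^{2q},\Omega}$ and $\|\cdot\|_{\eta,\Omega}$, and a separate approximation argument on a smooth subdomain $V_0$ is needed to pass from $u\in C^{\infty}(\overline{\Omega})$ to a general $u$ satisfying the hypotheses of Theorem~\ref{th4.7}. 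So the shape of your induction is right, but it must be carried out in a two-sided scale in which the image space for $Au$ has negative order; as written, your argument does not go through.
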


\begin{remark}\label{rem4.14}
Theorems \ref{th4.12} and \ref{th4.13} remain valid for every  $\varphi\in\mathrm{OR}$ subject to $\sigma_0(\varphi)>-1/2$ provided that we put $\eta:=\varphi$. In this case, they relate to Proposition~\ref{prop1} and are proved in the same way as that given in Section~\ref{sec6}, the proof of the version of Theorem~\ref{th4.13} being simplified (see Remark~\ref{rem6.3} at the end of Section~\ref{sec6}). Theorem \ref{th4.13} is proved in \cite[Theorem~3]{AnopKasirenko16MFAT} in the case indicated and under the assumptions that $u\in H^{\varphi\rho^{2q}}(\Omega)$ and $l=1$.
\end{remark}

As far as we know, Theorems \ref{th4.7} and \ref{th4.13} are new even in the Sobolev case where $\varphi(t)\equiv t^{s}$ and $\eta(t)\equiv t^{\lambda}$ for some $s\leq-1/2$ and $\lambda>-1/2$.

\section{Interpolation properties of the extended Sobolev scale}\label{sec5}

The method of interpolation with a function parameter between Hilbert spaces play a crucial role in our proof of the key Theorem~\ref{th1}. Therefore, it is worthwhile to recall the definition of this method. We also discuss some properties of the extended Sobolev scale that relate to the method and will be used in our proofs. This method was appeared first in Foia\c{s} and Lions' article \cite[p.~278]{FoiasLions61}. We will mainly follow the monograph \cite[Section~1.1]{MikhailetsMurach14}.

Let $X:=[X_{0},X_{1}]$ be an ordered pair of separable complex Hilbert spaces such that $X_{1}$ is a linear manifold in $X_{0}$ and that $\|w\|_{X_{0}}\leq c\|w\|_{X_{1}}$ for a certain number $c>0$ and every vector $w\in X_{1}$. This pair is called regular. For this pair there exists a unique positive-definite self-adjoint operator $J$ acting in $X_{0}$, defined on $X_{1}$, and obeying $\|Jw\|_{X_{0}}=\|w\|_{X_{1}}$ whenever $w\in X_{1}$. This operator is called a generating operator for~$X$.

Consider a Borel measurable function $\psi:(0,\infty)\rightarrow(0,\infty)$ such that $\psi$ is bounded on each compact subset of $(0,\infty)$ and that $1/\psi$ is bounded on every set $[r,\infty)$, with $r>0$. The class of all such functions $\psi$ is denoted by $\mathcal{B}$. Using the spectral resolution of $J$, we get the positive-definite self-adjoint operator $\psi(J)$ in $X_{0}$. Let
$[X_{0},X_{1}]_{\psi}$ or, simply, $X_{\psi}$ denote the domain of
$\psi(J)$ endowed with the inner product
$(w_1, w_2)_{X_\psi}:=(\psi(J)w_1,\psi(J)w_2)_{X_0}$ and the
corresponding norm $\|w\|_{X_\psi}=(w,w)_{X_\psi}^{1/2}$. The space $X_{\psi}$ is Hilbert and separable.

A function $\psi\in\mathcal{B}$ is called an interpolation parameter if the following condition is fulfilled for all regular pairs $X=[X_{0},X_{1}]$ and $Y=[Y_{0},Y_{1}]$ of Hilbert spaces and for any linear mapping $T$ given on $X_{0}$: if the restriction of $T$ to $X_{j}$ is a bounded operator $T:X_{j}\rightarrow Y_{j}$ for each $j\in\{0,1\}$, then the restriction of $T$ to $X_{\psi}$ is also a bounded operator $T:X_{\psi}\rightarrow Y_{\psi}$. If $\psi$ is an interpolation parameter, we will say that the Hilbert space $X_{\psi}$ is obtained by the interpolation with the function parameter $\psi$ between $X_{0}$ and $X_{1}$ and that the bounded operator $T:X_{\psi}\rightarrow Y_{\psi}$ is obtained by the interpolation of the operators $T:X_{j}\rightarrow Y_{j}$ with $j\in\{0,1\}$. In this case, we have the dense continuous embeddings
$X_{1}\hookrightarrow X_{\psi}\hookrightarrow X_{0}$.

The function $\psi$ is an interpolation parameter if and only
if $\psi$ is pseudoconcave in a neighbourhood of $+\infty$. The latter condition means that there exists a concave function $\psi_{1}:(b,\infty)\rightarrow(0,\infty)$, with $b\gg1$, that the
functions $\psi/\psi_{1}$ and $\psi_{1}/\psi$ are bounded on $(b,\infty)$.  This fundamental result follows from Peetre's \cite{Peetre68} description  of all interpolation functions of positive order (see  \cite[Sect.~1.1.9]{MikhailetsMurach14}.

\begin{proposition}\label{prop5.1}
Let $\alpha\in\mathrm{OR}$, $r_{0},r_{1}\in\mathbb{R}$, $r_{0}<\sigma_{0}(\alpha)$, and $r_{1}>\sigma_{1}(\alpha)$. Put
\begin{equation}\label{f5.1}
\psi(t):=
\begin{cases}
\;t^{{-r_0}/{(r_1-r_0)}}\,
\alpha\bigl(t^{1/{(r_1-r_0)}}\bigr)&\text{if}\quad t\geq1; \\
\;\alpha(1)&\text{if}\quad0<t<1.
\end{cases}
\end{equation}
Then $\psi\in\mathcal{B}$ is an interpolation parameter, and
\begin{equation*}
[H^{r_0}(G),H^{r_1}(G)]_{\psi}=H^{\alpha}(G)
\end{equation*}
with equality of norms if $G=\mathbb{R}^{n}$, and with equivalence of norms if $G=\Omega$ or $G=\Gamma$.
\end{proposition}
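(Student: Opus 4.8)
The plan is to verify the two assertions separately: first that $\psi\in\mathcal{B}$ is an interpolation parameter, and second that the interpolated space coincides with $H^\alpha(G)$. For the first assertion, I would start from the characterization recalled above: a function in $\mathcal{B}$ is an interpolation parameter if and only if it is pseudoconcave in a neighbourhood of $+\infty$. By \eqref{f3.2} applied to $\alpha$ with the exponents $r_0<\sigma_0(\alpha)\le\sigma_1(\alpha)<r_1$, one obtains two-sided power bounds $c_0 t^{r_0/(r_1-r_0)}\cdot t^{-r_0/(r_1-r_0)}\le \psi(t)/\psi(1)\le\cdots$; more precisely, substituting $\lambda\mapsto t^{1/(r_1-r_0)}$ into \eqref{f3.2} shows that $\psi(t)$ is, up to constants, trapped between two genuine powers of $t$ with positive exponents — namely $t^{(\sigma_0(\alpha)-r_0)/(r_1-r_0)}$ from below and $t^{(\sigma_1(\alpha)-r_0)/(r_1-r_0)}$ from above (both exponents lying strictly between $0$ and $1$). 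Since $\psi$ is also Borel measurable, bounded on compacts and with $1/\psi$ bounded near infinity, these bounds place $\psi$ in the range of Peetre's theorem, so $\psi$ is pseudoconcave near $+\infty$ and hence an interpolation parameter. (Alternatively, one cites \cite[Section~1.1]{MikhailetsMurach14}, where exactly this $\psi$ is shown to be an interpolation parameter for $\alpha\in\mathrm{OR}$.)

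For the second assertion in the case $G=\mathbb{R}^n$, I would compute the interpolation space explicitly. The generating operator $J$ of the regular pair $[H^{r_0}(\mathbb{R}^n),H^{r_1}(\mathbb{R}^n)]$ acts on the Fourier side as multiplication by $\langle\xi\rangle^{r_1-r_0}$; this follows because $H^{r_0}\to H^{r_1}$ is the natural scaling and the pair is regular with constant governed by \eqref{f3.3}. Then $\psi(J)$ acts as multiplication by $\psi(\langle\xi\rangle^{r_1-r_0})$, and by the definition of $\psi$ in \eqref{f5.1} one has $\psi(\langle\xi\rangle^{r_1-r_0})=\langle\xi\rangle^{-r_0}\alpha(\langle\xi\rangle)$ for $\langle\xi\rangle\ge1$, which is always the case. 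Hence the norm $\|w\|_{X_\psi}=\|\psi(J)w\|_{H^{r_0}}$ equals $\bigl(\int \langle\xi\rangle^{2r_0}\,|\psi(\langle\xi\rangle^{r_1-r_0})|^2|\widehat w(\xi)|^2\,d\xi\bigr)^{1/2}=\bigl(\int \alpha^2(\langle\xi\rangle)|\widehat w(\xi)|^2\,d\xi\bigr)^{1/2}=\|w\|_{\alpha,\mathbb{R}^n}$, giving equality of norms.

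For $G=\Omega$ and $G=\Gamma$, the equality of norms degrades to equivalence, and here I would invoke the standard machinery for transferring interpolation through restriction and localization: $H^\alpha(\Omega)$ is (by definition) a quotient of $H^\alpha(\mathbb{R}^n)$ via the restriction operator, and $H^\alpha(\Gamma)$ is glued from copies of $H^\alpha(\mathbb{R}^{n-1})$ through the local charts and the partition of unity; one uses that interpolation commutes with the formation of orthogonal complements / quotients and with finite direct sums, together with the existence of a bounded linear extension operator $H^r(\Omega)\to H^r(\mathbb{R}^n)$ simultaneously for $r=r_0$ and $r=r_1$ (e.g. a Stein-type extension), so that the interpolation property of $\psi$ transfers the $\mathbb{R}^n$-identity down to $\Omega$ and $\Gamma$ up to the constants introduced by the extension and the charts. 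The main obstacle I anticipate is not conceptual but bookkeeping: verifying carefully that the generating operator is exactly the Fourier multiplier claimed (which requires checking the pair is regular and identifying $J$ on its natural domain) and then handling the $\Omega$, $\Gamma$ cases cleanly — in practice one would cite \cite[Theorem~1.14, Section~3.2]{MikhailetsMurach14} or the analogous statements in \cite{MikhailetsMurach15ResMath1} for these transfer steps rather than reprove them, since the corresponding result for $\alpha$ a regularly varying function is already in the literature and the O-regular case is formally identical.
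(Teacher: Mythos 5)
The paper gives no proof of Proposition~\ref{prop5.1}; it simply cites \cite[Theorem~5.1]{MikhailetsMurach15ResMath1} for $G=\Omega$ and \cite[Theorems 2.19 and 2.22]{MikhailetsMurach14} for $G\in\{\mathbb{R}^{n},\Gamma\}$, and your sketch reproduces exactly the argument of those references (spectral identification of the generating operator as the Fourier multiplier $\langle\xi\rangle^{r_1-r_0}$ for $G=\mathbb{R}^{n}$, then transfer to $\Omega$ and $\Gamma$ by extension operators, charts, and the interpolation of quotients and direct sums). One caveat: a pointwise sandwich $c_0't^{a}\leq\psi(t)\leq c_1't^{b}$ with $0<a\leq b<1$ does \emph{not} by itself imply pseudoconcavity, so your phrase ``these bounds place $\psi$ in the range of Peetre's theorem'' should instead be justified by transferring the \emph{ratio} form \eqref{f3.2} to $\psi$, i.e.\ showing $\psi(\lambda t)/\psi(t)\leq c\max\{1,\lambda\}$ (which follows from $r_0<\sigma_0(\alpha)\leq\sigma_1(\alpha)<r_1$ exactly as in \cite[Theorem~4.2]{MikhailetsMurach15ResMath1}); your fallback citation covers this, so it is an imprecision rather than a gap.
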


This result is proved in \cite[Theorem~5.1]{MikhailetsMurach15ResMath1} for $G=\Omega$ and in \cite[Theorems 2.19 and 2.22]{MikhailetsMurach14} for $G\in\{\mathbb{R}^{n},\Gamma\}$.

The next result shows that the extended Sobolev scale is closed with respect to this interpolation.

\begin{proposition}\label{prop5.2}
Let $\alpha_0,\alpha_1\in\mathrm{OR}$ and $\psi\in\mathcal{B}$. Assume that the function $\alpha_0/\alpha_1$ is bounded in a neighbourhood of infinity and that $\psi$ is an interpolation parameter. Put $\alpha(t):=\alpha_0(t)\psi(\alpha_1(t)/\alpha_0(t))$ for every $t\geq1$. Then $\alpha\in\mathrm{OR}$, and
\begin{equation*}
[H^{\alpha_0}(G),H^{\alpha_1}(G)]_{\psi}=H^{\alpha}(G)
\end{equation*}
with equality of norms if $G=\mathbb{R}^{n}$, and with equivalence of norms if $G=\Omega$ or $G=\Gamma$.
\end{proposition}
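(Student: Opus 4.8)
The plan is to reduce the general assertion to the already-proved Proposition~\ref{prop5.1} by an explicit change of variables at the level of interpolation parameters. First I would verify that $\alpha\in\mathrm{OR}$. Write $\beta:=\alpha_1/\alpha_0$; since both $\alpha_0$ and $\alpha_1$ belong to $\mathrm{OR}$ and $\alpha_0/\alpha_1$ is bounded near infinity, $\beta\in\mathrm{OR}$ as well (using that $\mathrm{OR}$ is closed under products and reciprocals of functions bounded away from $0$ and $\infty$ on compacta), and $\beta$ is bounded below by a positive constant near infinity. Because $\psi$ is an interpolation parameter, it is pseudoconcave near $+\infty$, hence in particular O-regularly varying there; composing the O-regularly varying $\psi$ with $\beta\in\mathrm{OR}$ and multiplying by $\alpha_0\in\mathrm{OR}$ keeps us in $\mathrm{OR}$. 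One also gets the Matuszewska-index bounds $\sigma_j(\alpha_0)\le\sigma_j(\alpha)\le\sigma_j(\alpha_1)$ from the monotonicity properties of $\psi$, which guarantees $\alpha$ is a legitimate order for the extended scale.

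The core step is the identification of the spaces. Fix real numbers $r_0<\sigma_0(\alpha_0)\le\sigma_1(\alpha_1)<r_1$. By Proposition~\ref{prop5.1} applied to $\alpha_0$ and to $\alpha_1$, there are interpolation parameters $\psi_0,\psi_1\in\mathcal{B}$ with $[H^{r_0}(G),H^{r_1}(G)]_{\psi_k}=H^{\alpha_k}(G)$ for $k\in\{0,1\}$, each defined by the formula \eqref{f5.1} with $\alpha$ replaced by $\alpha_k$. The pair $[H^{\alpha_0}(G),H^{\alpha_1}(G)]$ is regular (this uses $\alpha_0/\alpha_1$ bounded near infinity together with the embedding \eqref{f3.3}-type inclusions, so $H^{\alpha_1}(G)\hookrightarrow H^{\alpha_0}(G)$ continuously and densely). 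Now I would invoke the composition/reiteration theorem for interpolation with a function parameter — namely, that interpolating $[X_0,X_1]_{\psi_0}$ and $[X_0,X_1]_{\psi_1}$ with a parameter $\psi$ gives $[X_0,X_1]_{\chi}$ where $\chi(t)=\psi_0(t)\,\psi(\psi_1(t)/\psi_0(t))$, valid whenever $\psi$, $\psi_0$, $\psi_1$ are interpolation parameters and $\psi_0/\psi_1$ has the right boundedness. This is a standard fact in this circle of ideas (see \cite[Section~1.1]{MikhailetsMurach14}); I would cite it rather than reprove it. Applying it with $X_0=H^{r_0}(G)$, $X_1=H^{r_1}(G)$ gives $[H^{\alpha_0}(G),H^{\alpha_1}(G)]_{\psi}=[H^{r_0}(G),H^{r_1}(G)]_{\chi}$.

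It then remains to compute $\chi$ and to check, via the reverse direction of Proposition~\ref{prop5.1}, that $[H^{r_0}(G),H^{r_1}(G)]_{\chi}=H^{\alpha}(G)$. Setting $\delta:=r_1-r_0$ and using $\psi_k(t)=t^{-r_0/\delta}\alpha_k(t^{1/\delta})$ for $t\ge1$, one finds $\psi_1(t)/\psi_0(t)=\alpha_1(t^{1/\delta})/\alpha_0(t^{1/\delta})$, so that $\chi(t)=t^{-r_0/\delta}\,\alpha_0(t^{1/\delta})\,\psi\bigl(\alpha_1(t^{1/\delta})/\alpha_0(t^{1/\delta})\bigr)=t^{-r_0/\delta}\,\alpha(t^{1/\delta})$ for $t\ge1$, which is exactly the function \eqref{f5.1} built from $\alpha$; hence Proposition~\ref{prop5.1} identifies the result as $H^\alpha(G)$, with equality of norms for $G=\mathbb{R}^n$ and equivalence for $G\in\{\Omega,\Gamma\}$. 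The main obstacle I anticipate is making the reiteration/composition formula for the function-parameter interpolation method precise enough to quote cleanly — in particular checking its hypotheses ($\psi_0,\psi_1$ interpolation parameters, $\psi_0/\psi_1$ bounded near infinity, $\chi\in\mathcal{B}$ an interpolation parameter) in the present setting; everything else is bookkeeping with Matuszewska indices and the explicit formula \eqref{f5.1}. If a ready-made reiteration theorem is not available in the cited form, the fallback is to argue directly: use the generating operator $J$ for $[H^{r_0}(G),H^{r_1}(G)]$, note that $\psi_k(J)$ are the generating operators (up to bounded equivalence) for the pairs built from $\alpha_k$, and express $\psi(J_{\alpha})$ — where $J_\alpha$ generates $[H^{\alpha_0},H^{\alpha_1}]$ — through the spectral calculus of $J$, recovering $\chi(J)$.
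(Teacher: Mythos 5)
Your proposal is essentially correct, and it follows the route that the cited sources actually take: the paper itself gives no proof of Proposition~\ref{prop5.2}, only references to \cite[Theorem~5.2]{MikhailetsMurach15ResMath1} and \cite[Theorems 2.18 and 2.22]{MikhailetsMurach14}, and the reiteration theorem you want to quote is available there (in \cite[Section~1.1]{MikhailetsMurach14}, for a regular pair $[X_0,X_1]$ with interpolation parameters $\psi_0,\psi_1,\psi$ and $\psi_0/\psi_1$ bounded near infinity, one has $[X_{\psi_0},X_{\psi_1}]_{\psi}=X_{\chi}$ with $\chi=\psi_0\,\psi(\psi_1/\psi_0)$, with equality of norms since $(\psi_1/\psi_0)(J)$ is the generating operator of the pair $[X_{\psi_0},X_{\psi_1}]$); your computation $\chi(t)=t^{-r_0/\delta}\alpha(t^{1/\delta})$ is exactly right and closes the loop through Proposition~\ref{prop5.1}. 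Two details need repair. First, the claimed bounds $\sigma_j(\alpha_0)\le\sigma_j(\alpha)\le\sigma_j(\alpha_1)$ are false in general: pointwise domination does not control Matuszewska indices, and e.g.\ $\alpha_0(t)=t$, $\alpha_1(t)=t^{2+\sin(\log\log t)}$, $\psi(\tau)=\tau^{1/2}$ gives $\sigma_0(\alpha)<\sigma_0(\alpha_0)$. What is true (and all you need) is $\min_k\sigma_0(\alpha_k)\le\sigma_0(\alpha)$ and $\sigma_1(\alpha)\le\max_k\sigma_1(\alpha_k)$, obtained from the two-sided pseudoconcavity estimate $c\min(1,\tau/\sigma)\le\psi(\tau)/\psi(\sigma)\le C\max(1,\tau/\sigma)$, which yields $\alpha(\lambda t)/\alpha(t)\le C\max\bigl(\alpha_0(\lambda t)/\alpha_0(t),\,\alpha_1(\lambda t)/\alpha_1(t)\bigr)$ and the analogous lower bound; this same estimate is also the clean way to verify $\alpha\in\mathrm{OR}$, since na\"ive ``composition of O-regularly varying functions'' is not licit here ($\alpha_1/\alpha_0$ is only bounded below near infinity, not necessarily tending to $\infty$, so you must also use boundedness of $\psi$ and $1/\psi$ on compact subsets of $(0,\infty)$). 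Second, your choice ``$r_0<\sigma_0(\alpha_0)\le\sigma_1(\alpha_1)<r_1$'' does not guarantee $r_0<\sigma_0(\alpha_1)$ or $r_1>\sigma_1(\alpha_0)$, which you need to apply Proposition~\ref{prop5.1} to both endpoints (and, via the corrected index bounds, to $\alpha$ itself); take $r_0<\min_k\sigma_0(\alpha_k)$ and $r_1>\max_k\sigma_1(\alpha_k)$. With these adjustments the argument is complete, and the equality of norms for $G=\mathbb{R}^n$ versus equivalence for $G\in\{\Omega,\Gamma\}$ propagates correctly through each step.
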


This result is proved in \cite[Theorem~5.2]{MikhailetsMurach15ResMath1} for $G=\Omega$ and in \cite[Theorems 2.18 and 2.22]{MikhailetsMurach14} for $G\in\{\mathbb{R}^{n},\Gamma\}$. We will use it in the case where $\alpha_0$ and $\alpha_1$ are power functions.

It follows from Proposition~\ref{prop5.1} and Ovchinnikov's theorem \cite[Theorem 11.4.1]{Ovchinnikov84} that this scale coincides (up to equivalence of norms) with the class of all Hilbert
spaces that are interpolation ones between the Sobolev
spaces $H^{(r_0)}(G)$ and $H^{(r_1)}(G)$ where $r_0,r_1\in\mathbb{R}$
and $r_0<r_1$ (as above, $G\in\{\mathbb{R}^{n},\Omega,\Gamma\}$). Recall that a Hilbert space $H$ is called an interpolation space between $X_0$ and $X_1$ if the following two properties are satisfied: a)~the continuous embeddings $X_1\hookrightarrow H\hookrightarrow X_0$ hold; b)~every linear operator bounded on $X_0$ and $X_1$ should be also bounded on $H$.

Let us establish a version of Proposition~\ref{prop5.1} for the space $H^{\alpha}_{A,\eta}(\Omega)$.

\begin{theorem}\label{th5.3}
Let $\alpha\in\mathrm{OR}$ and suppose that real numbers $r_{0}$, $r_{1}$, $\lambda_{0}$, and $\lambda_{1}$ satisfy the conditions $r_{0}<\sigma_{0}(\alpha)$, $r_{1}>\sigma_{1}(\alpha)$, $\lambda_{0}\leq\lambda_{1}$, $\lambda_{0}\geq r_{0}-2q$, and $\lambda_{1}\geq r_{1}-2q$. Besides, let $\psi$ be the interpolation parameter from Proposition~$\ref{prop5.1}$. Then the pair of separable Hilbert spaces $H^{r_0}_{A,\lambda_{0}}(\Omega)$ and $H^{r_1}_{A,\lambda_{1}}(\Omega)$ is regular, and
\begin{equation}\label{f5.2}
\bigl[H^{r_0}_{A,\lambda_{0}}(\Omega),
H^{r_1}_{A,\lambda_{1}}(\Omega)\bigr]_{\psi}=H^{\alpha}_{A,\chi}(\Omega)
\end{equation}
up to equivalence of norms. Here, the function $\chi\in\mathrm{OR}$ is defined as follows:
\begin{equation}\label{f5.3}
\chi(t):=t^{\lambda_{0}}\psi\bigl(t^{\lambda_{1}-\lambda_{0}}\bigr)=
t^{(r_1\lambda_0-r_0\lambda_1)/(r_1-r_0)}
\alpha\bigl(t^{(\lambda_1-\lambda_0)/(r_1-r_0)}\bigr)
\end{equation}
for every $t\geq1$.
\end{theorem}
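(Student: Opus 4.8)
The plan is to realize the equality \eqref{f5.2} by writing both the spaces $H^{r_j}_{A,\lambda_j}(\Omega)$ and the interpolation space as graphs of the operator $A$ over subspaces of an ambient direct sum, and then applying the known interpolation of subspaces and of direct sums. First I would check regularity of the pair: by \eqref{f3.3} (in the $\Omega$-version) we have $H^{r_1}(\Omega)\hookrightarrow H^{r_0}(\Omega)$ continuously and densely, and since $\lambda_0\le\lambda_1$ also $H^{\lambda_1}(\Omega)\hookrightarrow H^{\lambda_0}(\Omega)$; combining these with the graph-norm definition gives that $H^{r_1}_{A,\lambda_1}(\Omega)$ is continuously and densely embedded in $H^{r_0}_{A,\lambda_0}(\Omega)$ (density of $C^\infty(\overline\Omega)$ in each space, which is standard and used in Section~\ref{sec4}, takes care of density). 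Hence the pair is regular and the generating operator and $\psi(J)$ make sense.

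Next I would use the conditions $\lambda_0\ge r_0-2q$ and $\lambda_1\ge r_1-2q$. These guarantee that $A$ maps $H^{r_j}(\Omega)$ continuously into $H^{r_j-2q}(\Omega)\hookrightarrow H^{\lambda_j}(\Omega)$, so that the map $u\mapsto(u,Au)$ is an isomorphism of $H^{r_j}_{A,\lambda_j}(\Omega)$ onto a closed subspace $\Lambda_j$ of $H^{r_j}(\Omega)\oplus H^{\lambda_j}(\Omega)$, namely $\Lambda_j=\{(u,Au):u\in H^{r_j}(\Omega)\}$ equipped with the induced norm. By Proposition~\ref{prop5.1} applied coordinatewise (and the fact that interpolation commutes with finite orthogonal sums, cf. \cite[Section~1.1]{MikhailetsMurach14}), the interpolation of the ambient pair $\bigl[H^{r_0}(\Omega)\oplus H^{\lambda_0}(\Omega),\,H^{r_1}(\Omega)\oplus H^{\lambda_1}(\Omega)\bigr]_\psi$ equals $H^{\alpha}(\Omega)\oplus H^{\chi}(\Omega)$ with $\chi$ given by \eqref{f5.3}: indeed the parameter $\psi$ from Proposition~\ref{prop5.1} built from $(r_0,r_1)$ and $\alpha$ also transforms the pair $(t^{\lambda_0},t^{\lambda_1})$ into $t^{\lambda_0}\psi(t^{\lambda_1-\lambda_0})$, which is exactly the first expression in \eqref{f5.3}, and a short computation using \eqref{f5.1} gives the second expression.

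Then I would invoke the interpolation theorem for subspaces: if $Z_j$ is a closed subspace of a Hilbert space $Y_j$ that is the range of a projector defined uniformly (the orthogonal projector onto $\Lambda_j$ is induced by the same formula, since $\Lambda_j$ is the graph of $A$ on the common dense set $C^\infty(\overline\Omega)$), then $[Z_0,Z_1]_\psi=Z_0\cap(\text{closure})$ interpolates to the corresponding subspace of $[Y_0,Y_1]_\psi$. Concretely, $[Y_0,Y_1]_\psi\supset\{(u,Au):u\in H^\alpha(\Omega)\cap H^{r_0}(\Omega)\}$, and one shows this coincides with $\{(u,Au):u\in H^\alpha(\Omega), Au\in H^\chi(\Omega)\}=H^\alpha_{A,\chi}(\Omega)$, with norms equivalent to the graph norm. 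This identification, together with the previous paragraph, yields \eqref{f5.2}.

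The main obstacle I anticipate is the subspace-interpolation step: one must verify that the projector onto the graph $\Lambda_j$ inside $H^{r_j}(\Omega)\oplus H^{\lambda_j}(\Omega)$ is the restriction of a single $\psi$-admissible operator, i.e. that it is bounded simultaneously on the pair $Y_0,Y_1$ — this is where the hypothesis $\lambda_j\ge r_j-2q$ is essential (it makes $(u,f)\mapsto(u,Au)$ bounded on each $Y_j$, and this map composed with the first coordinate projection gives the graph projector). Once that is in place, the remaining work is the bookkeeping that $[\Lambda_0,\Lambda_1]_\psi$ equals the set on the right of \eqref{f5.2} with equivalent norms, which follows formally from the interpolation property applied to the mutually inverse bounded operators $u\mapsto(u,Au)$ and $(u,f)\mapsto u$, exactly as in the proof that $H^{r}_{A,\lambda}(\Omega)$ is Hilbert given after \eqref{f11}. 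I expect no difficulty in the formula \eqref{f5.3} beyond substituting \eqref{f5.1}.
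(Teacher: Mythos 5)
There is a genuine gap, and it sits exactly where you flagged your ``main obstacle.'' You claim that the hypothesis $\lambda_j\ge r_j-2q$ makes $A$ map $H^{r_j}(\Omega)$ continuously into $H^{r_j-2q}(\Omega)\hookrightarrow H^{\lambda_j}(\Omega)$, so that $(u,f)\mapsto(u,Au)$ is bounded on $Y_j=H^{r_j}(\Omega)\oplus H^{\lambda_j}(\Omega)$ and yields a bounded projector onto the graph $\Lambda_j$. But the embedding goes the other way: $\lambda_j\ge r_j-2q$ gives $H^{\lambda_j}(\Omega)\hookrightarrow H^{r_j-2q}(\Omega)$, not the reverse. (If one had $H^{r_j-2q}(\Omega)\hookrightarrow H^{\lambda_j}(\Omega)$, the condition $Au\in H^{\lambda_j}(\Omega)$ would be automatic and $H^{r_j}_{A,\lambda_j}(\Omega)$ would collapse to $H^{r_j}(\Omega)$, making the theorem trivial.) Consequently $\{(u,Au):u\in H^{r_j}(\Omega)\}$ is not even a subset of $Y_j$, the map $(u,f)\mapsto(u,Au)$ is unbounded on $Y_j$, and no bounded projector onto the graph is produced by your construction. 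Since the whole subspace-interpolation step hinges on a projector bounded simultaneously on $Y_0$ and $Y_1$, the argument does not go through.

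The missing idea is precisely the paper's Proposition~\ref{prop5.6}: instead of a projector one needs an approximate right inverse $R$ of $T=A$ satisfying $TRu=u+Su$ with $R:H^{r_j-2q}(\Omega)\to H^{r_j}(\Omega)$ bounded and $S:H^{r_j-2q}(\Omega)\to H^{\lambda_j}(\Omega)$ bounded (i.e.\ $S$ is smoothing enough to land in the smaller space $H^{\lambda_j}(\Omega)$). The paper builds these from the isomorphism $A^pA^{p+}+I:H^{\sigma}_{D}(\Omega)\leftrightarrow H^{\sigma-4qp}(\Omega)$ for the Dirichlet problem, taking $R=A^{p-1}A^{p+}(A^pA^{p+}+I)^{-1}$ and $S=-(A^pA^{p+}+I)^{-1}$ with $p$ large; this uses the ellipticity of $A$ in an essential way and cannot be replaced by the purely formal graph/coordinate-projection bookkeeping you propose. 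Your computation of $\chi$ from \eqref{f5.1} and the coordinatewise interpolation of the ambient direct sums are fine, and the regularity of the pair is acceptable modulo the density of $C^{\infty}(\overline{\Omega})$, but without the $TRu=u+Su$ mechanism the identification of $\bigl[H^{r_0}_{A,\lambda_0}(\Omega),H^{r_1}_{A,\lambda_1}(\Omega)\bigr]_{\psi}$ with $H^{\alpha}_{A,\chi}(\Omega)$ is not established.
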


\begin{remark}\label{rem5.4}
The case $\lambda_{0}=\lambda_{1}=:\lambda$ gives $\chi(t)\equiv t^{\lambda}\alpha(1)$ by \eqref{f5.3}. Hence, formula \eqref{f5.2} remains true in this case if we put $\chi(t):=t^{\lambda}$ whenever $t\geq1$.
\end{remark}

\begin{remark}\label{rem5.5}
If $\lambda_{j}=r_{j}-2q$ for certain $j\in\{0,1\}$, then $H^{r_j}_{A,\lambda_{j}}(\Omega)=H^{r_j}(\Omega)$ up to equivalence of norms. This follows from the boundedness of the operator $A:H^{r_j}(\Omega)\to H^{r_j-2q}(\Omega)$. Hence, the interpolation formula \eqref{f5.2} is applicable to the case where the Sobolev space $H^{r_j}(\Omega)$ is taken instead of $H^{r_j}_{A,\lambda_{j}}(\Omega)$.
\end{remark}

The proof of this theorem is based on Proposition~\ref{prop5.1} and
a result \cite[Theorem~3.12]{MikhailetsMurach14} on interpolation with a function parameter between certain Hilbert spaces induced by a bounded linear operator. Before we formulate this result, let us admit the following: if $H$, $\Phi$ and $\Psi$ are Hilbert spaces satisfying the continuous embedding $\Phi\hookrightarrow\Psi$ and if $T:H\rightarrow\Psi$ is a continuous linear operator, we put
$$
(H)_{T,\Phi}:=\{u\in H:\,Tu\in\Phi\}
$$
and endow the linear space $(H)_{T,\Phi}$ with the inner product
$$
(u_1,u_2)_{(H)_{T,\Phi}}:=(u_1,u_2)_{H}+(Tu_1,Tu_2)_{\Phi}
$$
and the corresponding norm $\|u\|_{(H)_{T,\Phi}}:=(u,u)_{(H)_{T,\Phi}}^{1/2}$. The inner product does not depend on $\Psi$, and the space $(H)_{T,\Phi}$ is Hilbert. The latter is proved in a quite similar way as the proof of the completeness of $H^{\alpha}_{A,\eta}(\Omega)$ given in Section~\ref{sec4} just after~\eqref{f11}.

\begin{proposition}\label{prop5.6}
Assume that six separable Hilbert spaces $X_{0}$, $Y_{0}$, $Z_{0}$, $X_{1}$, $Y_{1}$, and $Z_{1}$ and three linear mappings $T$, $R$, and $S$ are given and satisfy the following conditions:
\begin{itemize}
\item[(i)] The pairs $[X_{0},X_{1}]$ and  $[Y_{0},Y_{1}]$ are regular.
\item[(ii)] The spaces $Z_{0}$ and $Z_{1}$ are subspaces of a certain linear space $E$.
\item[(iii)] The continuous embeddings $Y_{0}\hookrightarrow Z_{0}$ and $Y_{1}\hookrightarrow Z_{1}$ hold.
\item[(iv)] The mapping $T$ is given on $X_{0}$ and defines the bounded operators $T:\nobreak X_{0}\rightarrow Z_{0}$ and $T:X_{1}\rightarrow Z_{1}$.
\item[(v)] The mapping $R$ is given on $E$ and defines the bounded operators $R:Z_{0}\rightarrow X_{0}$ and $R:Z_{1}\rightarrow X_{1}$.
\item[(vi)] The mapping $S$ is given on $E$ and defines the bounded operators $S:Z_{0}\rightarrow Y_{0}$ and
    $S:Z_{1}\rightarrow Y_{1}$.
\item[(vii)] The equality $TRu=u+Su$ holds for every $u\in E$.
\end{itemize}
Then the pair of the separable Hilbert spaces $(X_{0})_{T,Y_{0}}$ and $(X_{1})_{T,Y_{1}}$ is regular, and
\begin{equation}
\bigl[(X_{0})_{T,Y_{0}},(X_{1})_{T,Y_{1}}\bigr]_{\psi}=
(X_{\psi})_{T,Y_{\psi}}
\end{equation}
up to equivalence of norms for every interpolation parameter $\psi\in\mathcal{B}$.
\end{proposition}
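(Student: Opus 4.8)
The final statement is the abstract interpolation lemma (it is \cite[Theorem~3.12]{MikhailetsMurach14}); the plan is to reduce the assertion to two continuous embeddings and obtain each by interpolating suitable operators. First I would set $W_{j}:=(X_{j})_{T,Y_{j}}$ for $j\in\{0,1\}$, so that $\|u\|_{W_{j}}^{2}=\|u\|_{X_{j}}^{2}+\|Tu\|_{Y_{j}}^{2}$, and check that $[W_{0},W_{1}]$ is a regular pair of separable Hilbert spaces: each $W_{j}$ is Hilbert and separable (as recorded just before the statement), and the embeddings $X_{1}\hookrightarrow X_{0}$, $Y_{1}\hookrightarrow Y_{0}$ provided by the regularity of $[X_{0},X_{1}]$ and $[Y_{0},Y_{1}]$ in~(i) give $W_{1}\subset W_{0}$ with $\|u\|_{W_{0}}\leq c\,\|u\|_{W_{1}}$. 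I would also note that $(X_{\psi})_{T,Y_{\psi}}$ is well defined, since $T$ restricts to a bounded operator $X_{\psi}\to Z_{0}$ (by~(iv)) and $Y_{\psi}\hookrightarrow Y_{0}\hookrightarrow Z_{0}$ (by~(iii)). Because the norm of $(X_{\psi})_{T,Y_{\psi}}$ is exactly $(\|u\|_{X_{\psi}}^{2}+\|Tu\|_{Y_{\psi}}^{2})^{1/2}$, it then suffices to establish the two continuous embeddings $[W_{0},W_{1}]_{\psi}\hookrightarrow(X_{\psi})_{T,Y_{\psi}}$ and $(X_{\psi})_{T,Y_{\psi}}\hookrightarrow[W_{0},W_{1}]_{\psi}$.

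For the first embedding the inclusion $W_{j}\hookrightarrow X_{j}$ and the operator $T\colon W_{j}\to Y_{j}$ are bounded for $j=0,1$ straight from the definition of $\|\cdot\|_{W_{j}}$; since $\psi$ is an interpolation parameter, both extend to bounded operators $[W_{0},W_{1}]_{\psi}\to X_{\psi}$ and $[W_{0},W_{1}]_{\psi}\to Y_{\psi}$. Hence any $u\in[W_{0},W_{1}]_{\psi}$ lies in $X_{\psi}$ and satisfies $Tu\in Y_{\psi}$, i.e. $u\in(X_{\psi})_{T,Y_{\psi}}$, with its norm bounded by $c\,\|u\|_{[W_{0},W_{1}]_{\psi}}$.

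The reverse embedding is where $R$ and $S$ enter. I would introduce the linear mapping $\Theta(u,v):=u-R(Tu)+Rv$, which is meaningful on $X_{0}\oplus Y_{0}$ because $Tu\in Z_{0}\subset E$ and $v\in Y_{0}\hookrightarrow Z_{0}\subset E$ while $R$ is given on $E$. For $(u,v)\in X_{j}\oplus Y_{j}$ one has $Tu\in Z_{j}$ by~(iv) and $v\in Z_{j}$ by~(iii), hence $R(Tu),Rv\in X_{j}$ by~(v), so $\Theta\colon X_{j}\oplus Y_{j}\to X_{j}$ is bounded; and applying $T$ with~(vii) used twice,
\[
T\Theta(u,v)=Tu-TR(Tu)+TR(v)=Tu-(Tu+S(Tu))+(v+Sv)=v+Sv-S(Tu),
\]
where all three terms on the right lie in $Y_{j}$ (the last two by~(vi), since $v,Tu\in Z_{j}$). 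Thus $\Theta(u,v)\in W_{j}$ and $\Theta\colon X_{j}\oplus Y_{j}\to W_{j}$ is bounded for $j=0,1$; interpolating (the functor $[\,\cdot\,,\,\cdot\,]_{\psi}$ commutes with finite orthogonal sums) yields a bounded $\Theta\colon X_{\psi}\oplus Y_{\psi}\to[W_{0},W_{1}]_{\psi}$. Finally, for $u\in(X_{\psi})_{T,Y_{\psi}}$ the pair $(u,Tu)$ lies in $X_{\psi}\oplus Y_{\psi}$ and $\Theta(u,Tu)=u-R(Tu)+R(Tu)=u$, so $u\in[W_{0},W_{1}]_{\psi}$ with $\|u\|_{[W_{0},W_{1}]_{\psi}}\leq c\,(\|u\|_{X_{\psi}}^{2}+\|Tu\|_{Y_{\psi}}^{2})^{1/2}$. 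Combined with the first embedding this proves the claimed identity up to equivalence of norms, and the regularity of $[W_{0},W_{1}]$ noted above additionally yields the dense embeddings $W_{1}\hookrightarrow[W_{0},W_{1}]_{\psi}\hookrightarrow W_{0}$.

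I expect the main obstacle to be bookkeeping rather than depth: conditions (ii)--(vii) are tailored precisely so that $\Theta$ carries each $X_{j}\oplus Y_{j}$ into $W_{j}$ and acts as the identity on pairs of the form $(u,Tu)$, the parametrix identity $TR=I+S$ from~(vii) being exactly what forces $T\Theta(u,v)=v+Sv-S(Tu)$ to land in $Y_{j}$. The only other ingredient, permanence of this interpolation functor under finite direct sums, is standard for the Foia\c{s}--Lions method (a generating operator for $[X_{0}\oplus Y_{0},X_{1}\oplus Y_{1}]$ is the orthogonal sum of generating operators for the two pairs).
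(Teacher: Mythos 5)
Your proof is correct; note that the paper itself does not prove Proposition~\ref{prop5.6} but imports it from \cite[Theorem~3.12]{MikhailetsMurach14}, whose argument (going back to Lions and Magenes) is exactly your retraction--coretraction scheme: $\Theta(u,v)=u-R(Tu)+Rv$ maps $X_j\oplus Y_j$ boundedly onto $(X_j)_{T,Y_j}$ via the parametrix identity $TR=I+S$, while $u\mapsto(u,Tu)$ is a bounded right inverse, and the theorem on interpolation of orthogonal sums finishes the job. So this matches the intended proof in all essentials.
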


Note that conditions (i)--(vii) were found by Lions and Magenes \cite[Chapter~1, Theorem~14.3]{LionsMagenes72}, who proved a version of Proposition~\ref{prop5.6} for the holomorphic interpolation (with a number parameter).

\begin{proof}[Proof of Theorem~$\ref{th5.3}$.]
Choosing an integer $p\geq1$ arbitrarily, we consider the
linear PDO $A^{p}A^{p+}+I$ of order $4qp$. Here, as usual, $A^{p+}$ denotes the PDO which is formally adjoint to the $p$-th iteration $A^p$ of $A$, and $I$ is the identity operator. Let $H_D^\sigma(\Omega)$, where $\sigma\geq2qp$, denote the set of all distributions $u\in H^\sigma(\Omega)$  such that $\partial_\nu^ju=0$ on $\Gamma$ for each $j\in\{0,\ldots,2qp-1\}$, with $\partial_\nu$ being the operator of the differentiation with respect to the inward normal to the boundary $\Gamma$ of $\Omega$. The linear manifold $H_D^\sigma(\Omega)$  is well defined and closed in $H^\sigma(\Omega)$ due to the theorem on traces for Sobolev spaces (see, e.g., \cite[Section 4.7.1]{Triebel95}). We hence consider $H_D^\sigma(\Omega)$ as a  subspace of $H^\sigma(\Omega)$. The differential operator $A^{p}A^{p+}+I$ sets an isomorphism
$$
A^pA^{p+}+I:H_D^\sigma(\Omega)\leftrightarrow H^{\sigma-4qp}(\Omega)
$$
for each integer $\sigma\geq2qp$ (see, e.g., \cite[Lemma~3.1]{MikhailetsMurach14}). The inverse of this isomorphism sets a bounded linear operator
\begin{equation}\label{f524}
(A^pA^{p+}+I)^{-1}:H^l(\Omega)\rightarrow H^{l+4qp}(\Omega)
\end{equation}
for each integer $l\geq-2qp$. It follows from Proposition~\ref{prop5.1} that this operator is well defined and continuous for every real $l\geq -2qp$.

Turning to Proposition \ref{prop5.6}, we put $X_j:= H^{r_j}(\Omega)$, $Y_j:= H^{\lambda_j}(\Omega)$, and $Z_j :=H^{r_j-2q}(\Omega)$ for each $j\in\{0,1\}$, and $E:=H^{r_0-2q}(\Omega)$ and $T:=A$. Conditions (i)–(iv) of this proposition are evidently satisfied. We subject $p$ to the restrictions $r_j-2q\geq-2qp$ and $r_j-2q-\lambda_j\geq-4qp$ for each $j\in\{0,1\}$ and put
$$
R:= A^{p-1} A^{p+}(A^p A^{p+}+I)^{-1}\quad\text{and}\quad
S:=-(A^p A^{p+}+I)^{-1}.
$$
According to \eqref{f524}, we have the continuous linear operators
$$
R: Z_j = H^{r_j-2q}(\Omega)\rightarrow H^{r_j}(\Omega)= X_j
$$
and
$$
S: Z_j= H^{r_j-2q}(\Omega)\rightarrow H^{r_j-2q+4qp}(\Omega) \hookrightarrow H^{\lambda_j}(\Omega)=Y_j
$$
for each $j\in\{0,1\}$. Thus, conditions (v) and (vi) are also satisfied. The last condition (vii) is satisfied because
$$
T Ru = (A^p A^{p+}+I-I)(A^p A^{p+}+I)^{-1}u=u+Su
$$
for every $u\in E$.

Using Propositions \ref{prop5.6}, \ref{prop5.1}, and \ref{prop5.2} successively, we conclude that
\begin{align*}
\bigl[H^{r_0}_{A,\lambda_{0}}(\Omega),
H^{r_1}_{A,\lambda_{1}}(\Omega)\bigr]_{\psi}&=
\bigl[(X_{0})_{T,Y_{0}},(X_{1})_{T,Y_{1}}\bigr]_{\psi}=
(X_{\psi})_{T,Y_{\psi}}\\
&= \bigl([H^{r_0}(\Omega),H^{r_1}(\Omega)]_\psi\bigr)_
{A,[H^{\lambda_0}(\Omega),H^{\lambda_1}(\Omega)]_{\psi}}\\
&=\bigl(H^{\alpha}(\Omega)\bigr)_{A,H^{\chi}(\Omega)}
=H^{\alpha}_{A,\chi}(\Omega)
\end{align*}
up to equivalence of norms, which proves Theorem~$\ref{th5.3}$.
\end{proof}

Note that a version of Theorem $\ref{th5.3}$ for the interpolation with a number parameter is proved in \cite[Theorem~2]{KasirenkoMikhailetsMurach19}.

\section{Proofs of the main results}\label{sec6}

We will prove Theorem~$\ref{th1}$ with the help of its version for Sobolev spaces.

\begin{proposition}\label{prop6.1}
Let $s<-1/2$ and $\lambda>-1/2$. Then the set $C^{\infty}(\overline{\Omega})$ is dense in $H^{s+2q}_{A,\lambda}(\Omega)$, and the mapping \eqref{mapping} extends uniquely (by continuity) to a bounded linear operator
\begin{equation}\label{f6.1}
(A,B):H^{s+2q}_{A,\lambda}(\Omega)\to
H^{\lambda}(\Omega)\oplus\bigoplus_{j=1}^{q}H^{s+2q-m_j-1/2}(\Gamma)=
\mathcal{H}^{\lambda,s}(\Omega,\Gamma).
\end{equation}
This operator is Fredholm. Its kernel coincides with $N$, the range consists of all vectors $(f,g_{1},\ldots,g_{q})\in\mathcal{H}^{\lambda,s}(\Omega,\Gamma)$ that satisfy \eqref{f10}, and the index equals $\dim N-\dim N^{+}$.
\end{proposition}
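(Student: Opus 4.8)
The plan is to obtain Proposition~\ref{prop6.1} from the classical theory of regular elliptic problems in negative inner product Sobolev spaces, closing the only remaining gap — the exceptional orders — by interpolation with a function parameter.

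For every $\lambda>-1/2$ and every $s<-1/2$ with $s+2q\neq -k+1/2$ for all integers $k\geq1$, the whole assertion of Proposition~\ref{prop6.1} (density of $C^{\infty}(\overline{\Omega})$ in $H^{s+2q}_{A,\lambda}(\Omega)$, boundedness and Fredholm property of \eqref{f6.1}, kernel $N$, range described by \eqref{f10}, index $\dim N-\dim N^{+}$) is due to Lions--Magenes \cite{LionsMagenes62V,LionsMagenes63VI}, with the passage from $\lambda=0$ to arbitrary $\lambda>-1/2$ carried out in \cite{Murach09MFAT2} (see also \cite[Theorem~4.27]{MikhailetsMurach14} and \cite{KasirenkoMikhailetsMurach19}); I would invoke this. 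The conceptual point behind the boundedness — that $B_{j}$, although it does \emph{not} extend continuously to $H^{s+2q}(\Omega)$ when $s+2q\leq m_{j}+1/2$, \emph{does} extend to the graph space — is Green's formula, which for $u,v\in C^{\infty}(\overline{\Omega})$ reads
\begin{equation*}
\sum_{j=1}^{q}(B_{j}u,C^{+}_{j}v)_{\Gamma}=(u,A^{+}v)_{\Omega}
+\sum_{j=1}^{q}(C_{j}u,B^{+}_{j}v)_{\Gamma}-(Au,v)_{\Omega}.
\end{equation*}
Using this with $v$ taken from a solution of the adjoint Dirichlet-type problem $A^{+}v=0$, $B_{j}^{+}v=0$, $C_{j}^{+}v=h_{j}$ (the system $\{C_{j}^{+}\}\cup\{B_{j}^{+}\}$ being a normal Dirichlet system of order $2q$ and $A^{+}$ properly elliptic, this problem is regular elliptic), the middle sum drops out, and one is left with $(u,A^{+}v)_{\Omega}-(Au,v)_{\Omega}$, estimated by $c\,\|u\|_{s+2q,A,\lambda}\|h\|$ in the dual norm; this is precisely where the classical difficulty sits, since the adjoint solution $v$ has Sobolev order only $-s$, possibly below $2q-1/2$, so its higher normal traces need not exist classically and the finite-dimensional obstructions to solving the adjoint problem must be handled — as is done in the references just cited.

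It remains to treat an exceptional order $s=s_{*}<-1/2$ and an arbitrary $\lambda>-1/2$. Pick non-exceptional numbers $s^{-}<s_{*}<s^{+}$, both $<-1/2$, and put $\psi(t):=t^{\theta}$ with $\theta:=(s_{*}-s^{-})/(s^{+}-s^{-})\in(0,1)$, which is an interpolation parameter. By Proposition~\ref{prop5.1}, $\psi$ is exactly the parameter producing $H^{s_{*}+2q}(\Omega)$ from $H^{s^{-}+2q}(\Omega),H^{s^{+}+2q}(\Omega)$ and — the parameter being unchanged under a common shift of $\alpha,r_{0},r_{1}$ — also the one producing $H^{s_{*}+2q-m_{j}-1/2}(\Gamma)$ from the corresponding $s^{\pm}$-spaces for every $j$; together with the trivial identity $[H^{\lambda}(\Omega),H^{\lambda}(\Omega)]_{\psi}=H^{\lambda}(\Omega)$ this gives $[\mathcal{H}^{\lambda,s^{-}}(\Omega,\Gamma),\mathcal{H}^{\lambda,s^{+}}(\Omega,\Gamma)]_{\psi}=\mathcal{H}^{\lambda,s_{*}}(\Omega,\Gamma)$. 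On the domain side, Theorem~\ref{th5.3} with $r_{j}:=s^{\pm}+2q$ and $\lambda_{0}=\lambda_{1}:=\lambda$ (admissible because $\lambda>-1/2>s^{\pm}$) together with Remark~\ref{rem5.4} yields $[H^{s^{-}+2q}_{A,\lambda}(\Omega),H^{s^{+}+2q}_{A,\lambda}(\Omega)]_{\psi}=H^{s_{*}+2q}_{A,\lambda}(\Omega)$. Interpolating the two Fredholm operators \eqref{f6.1} for $s=s^{\pm}$ produces the bounded operator \eqref{f6.1} for $s=s_{*}$ (all three restrict to the same mapping on the common dense set $C^{\infty}(\overline{\Omega})$); its kernel equals $N$, because $H^{s_{*}+2q}_{A,\lambda}(\Omega)\subset H^{s^{-}+2q}_{A,\lambda}(\Omega)$ and the $s^{-}$-operator already has kernel $N$, and its range lies in $\{(f,g)\in\mathcal{H}^{\lambda,s_{*}}(\Omega,\Gamma):\eqref{f10}\text{ holds}\}$ by density of $C^{\infty}(\overline{\Omega})$ and the continuity on $\mathcal{H}^{\lambda,s_{*}}(\Omega,\Gamma)$ of the functionals in \eqref{f10}. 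Since a single right regulariser of $(A,B)$ — one bounded operator $S$ with $S(A,B)=I-Q$, $Q$ the finite-rank projector onto $N$ — serves at both endpoint orders, its interpolate is a right regulariser at $s_{*}$; hence \eqref{f6.1} is Fredholm at $s_{*}$ with closed range, which therefore coincides with the codimension-$\dim N^{+}$ subspace cut out by \eqref{f10}, and the index is $\dim N-\dim N^{+}$. Finally, $C^{\infty}(\overline{\Omega})$ is dense in $H^{s_{*}+2q}_{A,\lambda}(\Omega)$ because it is dense in $H^{s^{+}+2q}_{A,\lambda}(\Omega)$, which embeds densely into $H^{s_{*}+2q}_{A,\lambda}(\Omega)$ by the interpolation.

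The step I expect to be the crux is this last interpolation: beyond matching the interpolation parameters on the domain and on every target summand (routine, via Theorem~\ref{th5.3}, Remark~\ref{rem5.4} and Proposition~\ref{prop5.1}), one must transport the \emph{closedness} of the range and the precise kernel, range and index, which forces one to exhibit a right regulariser of $(A,B)$ bounded simultaneously at both endpoint orders; the kernel and the inclusion of the range into the set \eqref{f10} come cheaply, but closedness of the range does not. If one instead wanted a self-contained treatment of the non-exceptional case, the genuinely hard part is the one flagged above — the low Sobolev regularity of the adjoint solution and the finite-dimensional corrections in the Green's-formula argument.
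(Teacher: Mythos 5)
Your proposal follows essentially the same route as the paper: invoke the classical Lions--Magenes/Roitberg result for non-exceptional orders $s+2q\neq -k+1/2$ (the paper cites \cite[Section~4.4.3]{MikhailetsMurach14}), then close the exceptional orders by interpolating between two non-exceptional ones with the power parameter $\psi(t)=t^{\theta}$, identifying the interpolated domain via Theorem~\ref{th5.3} and Remark~\ref{rem5.4} and the interpolated target via Proposition~\ref{prop5.2}; the density claim is likewise read off from the dense embedding of the upper endpoint space. The one soft spot is your transport of the range: the paper simply cites the theorem on interpolation of Fredholm operators (\cite[Theorem~1.7]{MikhailetsMurach14}), which delivers the range as $\mathcal{H}^{\lambda,s_*}(\Omega,\Gamma)\cap(A,B)\bigl(H^{s^-+2q}_{A,\lambda}(\Omega)\bigr)$, i.e.\ exactly the set cut out by \eqref{f10}, whereas your one-sided regulariser $S(A,B)=I-Q$ only yields finite-dimensional kernel and closed range, and ``closed range contained in the codimension-$\dim N^{+}$ set'' does not by itself force equality (a closed subspace of a finite-codimensional subspace may have strictly larger codimension). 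The fix is in the spirit of what you already do: interpolate as well a regulariser on the other side, $(A,B)S'=I-Q'$ with $Q'$ the finite-rank projector onto $\{(w,0,\ldots,0):w\in N^{+}\}$ (valid at both endpoints since $N^{+}\subset C^{\infty}(\overline{\Omega})$); then any $(f,g)$ satisfying \eqref{f10} equals $(A,B)S'(f,g)$ and surjectivity onto that set, hence the index, follows.
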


\begin{proof}
This proposition is proved in \cite[Section~4.4.3]{MikhailetsMurach14} in the case \eqref{f4.8}. Let us examine the opposite case; i.e., we assume that $s+2q=-k+1/2$ for a certain integer $k\geq1$ and deduce  Proposition~\ref{prop6.1} from the first case with the help of the interpolation.

Choose numbers $s_0$ and $s_1$ so that $s_0<s<s_1<-1/2$ and that $s_j+2q-1/2\notin\mathbb{Z}$ whenever $j\in\{0,1\}$. We have the Fredholm bounded operators
\begin{equation}\label{f6.2}
(A,B):H^{s_j+2q}_{A,\lambda}(\Omega)\to
\mathcal{H}^{\lambda,s_j}(\Omega,\Gamma)
\quad\mbox{for each}\quad j\in\{0,1\}.
\end{equation}
Put $\alpha(t):=t^{s+2q}$ whenever $t\geq1$ and $r_{j}:=s_{j}+2q$ whenever $j\in\{0,1\}$, and define an interpolation parameter $\psi$ by formula \eqref{f5.1}. Thus, $\psi(t):=t^{(s-s_0)/(s_1-s_0)}$ for every $t\geq1$. Then a restriction of the operator \eqref{f6.2} for $j=0$ is a bounded operator between the spaces
\begin{equation}\label{f6.3}
(A,B):\bigl[H^{s_0+2q}_{A,\lambda}(\Omega),
H^{s_1+2q}_{A,\lambda}(\Omega)\bigr]_{\psi}\to
\bigl[\mathcal{H}^{\lambda,s_0}(\Omega,\Gamma),
\mathcal{H}^{\lambda,s_1}(\Omega,\Gamma)\bigr]_{\psi}.
\end{equation}
This operator is Fredholm according to the theorem on interpolation of Fredholm operators (see, e.g., \cite[Theorem~1.7]{MikhailetsMurach14}).

Owing to Theorem~\ref{th5.3} and in view of Remark~\ref{rem5.4}, we get
\begin{equation}\label{f6.4}
\bigl[H^{s_0+2q}_{A,\lambda}(\Omega),
H^{s_1+2q}_{A,\lambda}(\Omega)\bigr]_{\psi}=H^{s+2q}_{A,\lambda}(\Omega).
\end{equation}
Besides,
\begin{align*}
&\bigl[\mathcal{H}^{\lambda,s_0}(\Omega,\Gamma),
\mathcal{H}^{\lambda,s_1}(\Omega,\Gamma)\bigr]_{\psi}\\
&=\bigl[H^{\lambda}(\Omega),H^{\lambda}(\Omega)\bigr]_\psi
\oplus\bigoplus_{j=1}^{q}
\bigl[H^{s_0+2q-m_j-1/2}(\Gamma),H^{s_1+2q-m_j-1/2}(\Gamma)\bigr]_\psi\\
&=\mathcal{H}^{\lambda,s}(\Omega,\Gamma)
\end{align*}
due to Proposition~\ref{prop5.2} and the theorem on interpolation of orthogonal sums of Hilbert spaces (see, e.g., \cite[Theorem~1.5]{MikhailetsMurach14}). Hence, the Fredholm bounded operator \eqref{f6.3} acts between the spaces \eqref{f6.1}. Since the operators \eqref{f6.2} have the common kernel $N$ and index $\dim N-\dim N^{+}$, so does the operator \eqref{f6.1} according to \cite[Theorem~1.7]{MikhailetsMurach14}. Moreover,
\begin{align*}
(A,B)\bigl(H^{s+2q}_{A,\lambda}(\Omega)\bigr)&=
\mathcal{H}^{\lambda,s}(\Omega,\Gamma)\cap
(A,B)\bigl(H^{s_0+2q}_{A,\lambda}(\Omega)\bigr)\\
&=\bigl\{(f,g)\in\mathcal{H}^{\lambda,s}(\Omega,\Gamma):
\mbox{\eqref{f10} is satisfied}\bigr\}
\end{align*}
due to the same theorem.

Ending this proof, note that \eqref{f6.4} implies the dense continuous embedding of $H^{s_1+2q}_{A,\lambda}(\Omega)$ in $H^{s+2q}_{A,\lambda}(\Omega)$. Hence, the set $C^{\infty}(\overline{\Omega})$ being dense in the first space is also dense in the second.
\end{proof}

\begin{proof}[Proof of Theorem~$\ref{th1}$]
It is convenient to consider the cases where $\sigma_{1}(\varphi)\geq-1/2$ and were $\sigma_{1}(\varphi)<-1/2$ separately.

We assume first that $\sigma_{1}(\varphi)\geq-1/2$. Since $s_{1}>\sigma_{1}(\varphi)$, the mapping \eqref{mapping} extends uniquely (by continuity) to a bounded operator
\begin{equation}\label{f18}
(A,B):H^{s_{1}+2q}(\Omega)\rightarrow
H^{s_{1}}(\Omega)\oplus\bigoplus_{j=1}^{q}
H^{s_{1}+2q-m_j-1/2}(\Gamma)=:\mathcal{H}^{s_{1}}(\Omega,\Gamma).
\end{equation}
This operator is Fredholm with kernel $N$ and index $\dim N-\dim N^{+}$. This fact is a specific case of Proposition~\ref{prop1} and is well known in the $s_{1}\geq0$ case (see., e.g., \cite[Chapter~2, Section~5.4]{LionsMagenes72}). Besides, since $s_{0}<-1/2$ and $\lambda>-1/2$, we have the Fredholm bounded operator \eqref{f6.1} for $s:=s_0$, due to Proposition~\ref{prop6.1}. The kernel and index of \eqref{f6.1} are the same as those of \eqref{f18}. We put $\alpha(t):=\varphi(t)t^{2q}$ whenever $t\geq1$, set $r_{j}:=s_{j}+2q$ whenever $j\in\{0,1\}$, and define an interpolation parameter $\psi$ by formula \eqref{f5.1}. A restriction of \eqref{f6.1} is a bounded operator between the spaces
\begin{equation}\label{f6.6}
(A,B):\bigl[H^{s_0+2q}_{A,\lambda}(\Omega),
H^{s_1+2q}(\Omega)\bigr]_{\psi}\to \bigl[\mathcal{H}^{\lambda,s_0}(\Omega,\Gamma),
\mathcal{H}^{s_1}(\Omega,\Gamma)\bigr]_{\psi}.
\end{equation}
This operator is Fredholm with the same kernel and index according to \cite[Theorem~1.7]{MikhailetsMurach14}.

Owing to Theorem~\ref{th5.3} and in view of Remark~\ref{rem5.5}, we get
\begin{equation}\label{f6.7}
\bigl[H^{s_0+2q}_{A,\lambda}(\Omega),H^{s_1+2q}(\Omega)\bigr]_{\psi}=
\bigl[H^{s_0+2q}_{A,\lambda}(\Omega),
H^{s_1+2q}_{A,s_1}(\Omega)\bigr]_{\psi}=
H^{\varphi\varrho^{2q}}_{A,\eta}(\Omega).
\end{equation}
Note that $\eta(t)$ defined by \eqref{f13} is equal to $\chi(t)$ defined by \eqref{f5.3} if $\lambda_{0}=\lambda$ and $\lambda_{1}=s_1$. Indeed,
\begin{equation}\label{f6.7b}
\begin{aligned}
\chi(t)&=t^{\lambda}\psi(t^{s_1-\lambda})=
t^{\lambda}t^{-(s_0+2q)(s_1-\lambda)/(s_1-s_0)}
\alpha(t^{(s_1-\lambda)/(s_1-s_0)})\\
&=t^{\lambda-(s_0+2q)\theta}\alpha(t^{\theta})=
t^{\lambda-(s_0+2q)\theta}\varphi(t^{\theta})t^{2q\theta}=
t^{\lambda-s_0\theta}\varphi(t^{\theta})\\
&=t^{(1-\theta)s_1}\varphi(t^{\theta})=\eta(t)
\end{aligned}
\end{equation}
whenever $t\geq1$. Besides,
\begin{align*}
&\bigl[\mathcal{H}^{\lambda,s_0}(\Omega,\Gamma),
\mathcal{H}^{s_1}(\Omega,\Gamma)\bigr]_{\psi}\\
&=\bigl[H^{\lambda}(\Omega),H^{s_1}(\Omega)\bigr]_\psi
\oplus\bigoplus_{j=1}^{q}
\bigl[H^{s_0+2q-m_j-1/2}(\Gamma),H^{s_1+2q-m_j-1/2}(\Gamma)\bigr]_\psi\\
&=\mathcal{H}^{\eta,\varphi}(\Omega,\Gamma)
\end{align*}
due to Proposition~\ref{prop5.2}. Note here that $\eta(t)\equiv t^{\lambda}\psi(t^{s_1-\lambda})$ as was just shown and that
\begin{equation*}
t^{s_0+2q-m_j-1/2}\,\psi(t^{s_1-s_0})=
t^{s_0+2q-m_j-1/2}\,t^{-s_0-2q}\alpha(t)=\varphi(t)t^{2q-m_j-1/2}
\end{equation*}
whenever $t\geq1$.

Thus, the Fredholm bounded operator \eqref{f6.6} acts between the spaces \eqref{f15}. According to \cite[Theorem~1.7]{MikhailetsMurach14} and Proposition~\ref{prop6.1}, we conclude that
\begin{equation}\label{f6.8}
\begin{aligned}
(A,B)\bigl(H^{\varphi\varrho^{2q}}_{A,\eta}(\Omega)\bigr)&=
\mathcal{H}^{\eta,\varphi}(\Omega,\Gamma)\cap
(A,B)\bigl(H^{s_0+2q}_{A,\lambda}(\Omega)\bigr)\\
&=\bigl\{(f,g)\in
\mathcal{H}^{\eta,\varphi}(\Omega,\Gamma):
\mbox{\eqref{f10} is satisfied}\bigr\}.
\end{aligned}
\end{equation}
It remains to note that the density of $C^{\infty}(\overline{\Omega})$ in $H^{\varphi\varrho^{2q}}_{A,\eta}(\Omega)$ is a consequence of the dense continuous embedding of $H^{s_1+2q}(\Omega)$ into $H^{\varphi\varrho^{2q}}_{A,\eta}(\Omega)$. This embedding is due to \eqref{f6.7}. The case $\sigma_{1}(\varphi)\geq-1/2$ is examined.

Assume now that $\sigma_{1}(\varphi)<-1/2$. Since $s_0<s_1<-1/2$ in this case, we have the Fredholm bounded operators \eqref{f6.2} due to Proposition~\ref{prop6.1}. Using the same $\alpha$, $r_0$, $r_1$, and  interpolation parameter $\psi$ as in the previous case, we conclude that a restriction of \eqref{f6.2} for $j=0$ is a bounded operator between the spaces \eqref{f6.3}. This operator is Fredholm with kernel $N$ and index $\dim N-\dim N^{+}$ by Proposition~\ref{prop6.1} and \cite[Theorem~1.7]{MikhailetsMurach14}.
According to Theorem~\ref{th5.3} and Remark~\ref{rem5.4}, we have
\begin{equation}\label{f6.9}
\bigl[H^{s_0+2q}_{A,\lambda}(\Omega),
H^{s_1+2q}_{A,\lambda}(\Omega)\bigr]_{\psi}=
H^{\varphi\varrho^{2q}}_{A,\lambda}(\Omega).
\end{equation}
Besides,
\begin{equation*}
\bigl[\mathcal{H}^{\lambda,s_0}(\Omega,\Gamma),
\mathcal{H}^{\lambda,s_1}(\Omega,\Gamma)\bigr]_{\psi}=
\mathcal{H}^{\lambda,\varphi}(\Omega,\Gamma)=
\mathcal{H}^{\eta,\varphi}(\Omega,\Gamma)
\end{equation*}
due to Proposition~\ref{prop5.2}. Hence, the Fredholm bounded operator \eqref{f6.3} acts between the spaces \eqref{f15}, with \eqref{f6.8} holding due to \cite[Theorem~1.7]{MikhailetsMurach14}. Now, the density of $C^{\infty}(\overline{\Omega})$ in $H^{\varphi\varrho^{2q}}_{A,\eta}(\Omega)$ is a consequence of Proposition~\ref{prop6.1} and the dense continuous embedding of  $H^{s_1+2q}_{A,\lambda}(\Omega)$ into $H^{\varphi\varrho^{2q}}_{A,\eta}(\Omega)$. This embedding is due to \eqref{f6.9}. The case $\sigma_{1}(\varphi)<-1/2$ is also examined.
\end{proof}

To prove Lema~\ref{lema1} and other results, we need the scale  $\{H^{r,(2q)}(\Omega):r\in\mathbb{R}\}$ of Hilbert spaces introduced by  Roitberg \cite{Roitberg64}. This scale is applied in the theory of elliptic problems \cite{Berezansky68, KozlovMazyaRossmann97, Roitberg96, Roitberg99}. We will mainly follow \cite[Section~1.10 and Chapter~2]{Roitberg96}.

We first consider the Hilbert space $H^{r,(0)}(\Omega)$ used in the definition of $H^{r,(2q)}(\Omega)$. Let $H^{r,(0)}(\Omega):=H^{r}(\Omega)$ in the $r\geq0$ case. If  $r<0$, then $H^{r,(0)}(\Omega)$ is defined to be the dual of $H^{-r}(\Omega)$ with respect to the inner product in $L_{2}(\Omega)$. Namely, $H^{r,(0)}(\Omega)$, where $r<0$, is the completion of $C^{\infty}(\overline{\Omega})$ with respect to the Hilbert norm
\begin{equation}\label{f6.10}
\|u\|_{r,(0),\Omega}:=\sup\biggl\{\frac{|(u,w)_{\Omega}|}
{\;\quad\|w\|_{-r,\Omega}}:
w\in H^{-r}(\Omega),\,w\neq0\biggr\}.
\end{equation}
Thus, the inner product in $L_{2}(\Omega)$ extends by continuity to a sesquilinear form $(u_1,u_2)_{\Omega}$ defined for arbitrary $u_1\in H^{r,(0)}(\Omega)$ and $u_2\in H^{-r,(0)}(\Omega)$, with $r\in\mathbb{R}$. The norm in $H^{r,(0)}(\Omega)$ is denoted by $\|\cdot\|_{r,(0),\Omega}$ for every $r\in\mathbb{R}$.

Now we can define the Hilbert space $H^{r,(2q)}(\Omega)$. Let
$E_{2q}:=\{1/2,3/2,\ldots,2q-1/2\}$. If $r\in\mathbb{R}\setminus E_{2q}$, then the space $H^{r,(2q)}(\Omega)$ is defined to be the completion of $C^{\infty}(\overline{\Omega})$ with respect to the Hilbert norm
\begin{equation}\label{f6.11}
\|u\|_{r,(2q),\Omega}:=\Biggl(\|u\|_{r,(0),\Omega}^{2}+
\sum_{k=1}^{2q}\;\|(\partial_{\nu}^{k-1}u)\!\upharpoonright\!\Gamma\|
_{r-k+1/2,\Gamma}^{2}\Biggr)^{1/2}.
\end{equation}
(As above, $\partial_\nu$ is the operator of the differentiation along the inward normal to $\Gamma$.) If $r\in E_{2q}$, then we put
\begin{equation*}
H^{r,(2q)}(\Omega):=\bigl[H^{r-\varepsilon,(2q)}(\Omega),
H^{r+\varepsilon,(2q)}(\Omega)\bigr]_{\psi}
\end{equation*}
where $\psi(t)\equiv \sqrt{t}$ and $0<\varepsilon<1$. The right-hand side of this equality does not depend on the choice $\varepsilon$ up to equivalence of norms. The norm in the Hilbert space $H^{r,(2q)}(\Omega)$ is denoted by $\|\cdot\|_{r,(2q),\Omega}$ for every $r\in\mathbb{R}$.

Let $p\in\{0,2q\}$. If $-\infty<r_0<r_1<\infty$, then the identity mapping on $C^{\infty}(\overline{\Omega})$ extends uniquely to a continuous imbedding operator $H^{r_1,(p)}(\Omega)\hookrightarrow H^{r_0,(p)}(\Omega)$. Besides,
\begin{equation}\label{f6.12}
\mbox{if}\;\;r>p-1/2,\;\;\mbox{then}\;\;H^{r,(p)}(\Omega)=H^{r}(\Omega)
\end{equation}
as completions of $C^{\infty}(\overline{\Omega})$ with respect to equivalent norms.

We will use the next result in the proof of Lemma~\ref{lema1}.

\begin{proposition}\label{prop6.2}
Let $\omega\in\mathrm{OR}$, $\sigma_{0}(\omega)>-1/2$, $r\in\mathbb{R}$, and
\begin{equation}\label{f6.13}
r\notin\{-k+1/2:1\leq k\in\mathbb{Z}\}.
\end{equation}
Then the norms
\begin{equation}\label{f6.14}
\|u\|_{r,A,\omega}:=\bigl(\|u\|^{2}_{r,\Omega}+
\|Au\|^{2}_{\omega,\Omega}\bigr)^{1/2}
\end{equation}
and
\begin{equation}\label{f6.15}
\|u\|_{r,(2q),A,\omega}:=\bigl(\|u\|^{2}_{r,(2q),\Omega}+
\|Au\|^{2}_{\omega,\Omega}\bigr)^{1/2}
\end{equation}
are equivalent on the class of all functions $u\in C^{\infty}(\overline{\Omega})$.
\end{proposition}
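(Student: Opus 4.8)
Since the summand $\|Au\|_{\omega,\Omega}$ enters \eqref{f6.14} and \eqref{f6.15} in exactly the same way, the proposition is equivalent to proving, for all $u\in C^{\infty}(\overline{\Omega})$,
\[
\|u\|_{r,\Omega}\le\|u\|_{r,(2q),\Omega}
\qquad\text{and}\qquad
\|u\|_{r,(2q),\Omega}^{2}\le c\,\bigl(\|u\|_{r,\Omega}^{2}+\|Au\|_{\omega,\Omega}^{2}\bigr).
\]
The first inequality is easy: $\|u\|_{r,\Omega}\le\|u\|_{r,(0),\Omega}\le\|u\|_{r,(2q),\Omega}$, the right step being \eqref{f6.11} and the left step holding because the identity on $C^{\infty}(\overline{\Omega})$ extends to a norm-decreasing map $H^{r,(0)}(\Omega)\to H^{r}(\Omega)$ (for $r\ge0$ the two norms coincide, and for $r<0$ this follows at once from the dual descriptions of the two norms). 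Using \eqref{f6.11} once more, the second inequality reduces to
\[
\|u\|_{r,(0),\Omega}\le c\,\|u\|_{r,\Omega}
\qquad\text{and}\qquad
\sum_{k=1}^{2q}\bigl\|(\partial_{\nu}^{k-1}u)\!\upharpoonright\!\Gamma\bigr\|_{r-k+1/2,\Gamma}\le c\,\|u\|_{r,A,\omega}.
\]
The first of these is a classical property of the Roitberg scale: for $r$ subject to \eqref{f6.13} one has $H^{r,(0)}(\Omega)=H^{r}(\Omega)$ up to equivalence of norms --- this is \eqref{f6.12} with $p=0$ when $r>-1/2$, and for $r<-1/2$ it follows, by duality, from a standard property of Sobolev spaces over domains, the hypothesis \eqref{f6.13} being exactly the relevant non-exceptionality condition (see \cite{Roitberg96}).

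The essential point is therefore the Cauchy-data estimate. If $r>2q-1/2$ it is immediate, since then $H^{r,(2q)}(\Omega)=H^{r}(\Omega)$ by \eqref{f6.12} (with $p=2q$) and each trace $\partial_{\nu}^{k-1}(\cdot)\!\upharpoonright\!\Gamma$, $k=1,\dots,2q$, maps $H^{r}(\Omega)$ boundedly into $H^{r-k+1/2}(\Gamma)$. Assume now $r\le2q-1/2$ and put $\varphi:=\varrho^{\,r-2q}\in\mathrm{OR}$, so that $\sigma_{0}(\varphi)=\sigma_{1}(\varphi)=r-2q\le-1/2$ and Theorem~\ref{th1} applies. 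I would choose the numbers $s_{0},s_{1},\lambda$ subject to \eqref{f4.4}--\eqref{f4.5} so that the associated function $\eta$ --- which is a power of $\varrho$, since $\varphi$ is --- satisfies $-1/2<\sigma_{0}(\eta)<\sigma_{0}(\omega)$; this is possible because $\sigma_{0}(\omega)>-1/2$, and it gives the continuous embedding $H^{\omega}(\Omega)\hookrightarrow H^{\eta}(\Omega)$, hence $H^{\varphi\varrho^{2q}}_{A,\eta}(\Omega)=H^{r}_{A,\eta}(\Omega)$ and $\|u\|_{r,A,\eta}\le c\,\|u\|_{r,A,\omega}$ for all $u$. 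Recall that, together with the set $\{C_{j}\}$ from Green's formula, the collection $(B_{1},\dots,B_{q},C_{1},\dots,C_{q})$ is a normal Dirichlet system of order $2q$, so that the orders $m_{j}$ and $\mathrm{ord}\,C_{j}$ together exhaust $\{0,1,\dots,2q-1\}$. By Theorem~\ref{th1} applied to the regular elliptic problem $(A,B)$ and, in view of Remark~\ref{bounded-operator}, also to the (normal but in general non-elliptic) system $(A,C)$, the mappings $u\mapsto(Au,Bu)$ and $u\mapsto(Au,Cu)$, $u\in C^{\infty}(\overline{\Omega})$, extend by continuity to bounded operators with domain $H^{r}_{A,\eta}(\Omega)$; in particular, for $u\in C^{\infty}(\overline{\Omega})$,
\[
\sum_{j=1}^{q}\|B_{j}u\|_{r-m_{j}-1/2,\Gamma}+\sum_{j=1}^{q}\|C_{j}u\|_{r-\mathrm{ord}\,C_{j}-1/2,\Gamma}\le c\,\|u\|_{r,A,\eta}\le c'\,\|u\|_{r,A,\omega}.
\]
Finally, since $(B,C)$ is a Dirichlet system, the passage from the Cauchy data $\bigl((\partial_{\nu}^{k-1}u)\!\upharpoonright\!\Gamma\bigr)_{k=1}^{2q}$ to $\bigl(B_{j}u,C_{j}u\bigr)_{j=1}^{q}$ is given by an invertible matrix of tangential differential operators with $C^{\infty}(\Gamma)$-coefficients, triangular with respect to orders, and both this matrix and its inverse act continuously between $\bigoplus_{k=1}^{2q}H^{r-k+1/2}(\Gamma)$ and $\bigoplus_{j=1}^{q}\bigl(H^{r-m_{j}-1/2}(\Gamma)\oplus H^{r-\mathrm{ord}\,C_{j}-1/2}(\Gamma)\bigr)$; combined with the previous display this yields the Cauchy-data estimate, and hence the proposition.

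The step I expect to need the most care is the last one: one has to verify that the change of variables between the Cauchy data and $\bigl(B_{j}u,C_{j}u\bigr)_{j}$ is genuinely bicontinuous in the indicated scales over $\Gamma$ for every $r\le2q-1/2$ --- in particular in the range $r<1/2$, where the individual traces live on $H^{r}_{A,\eta}(\Omega)$ but not on $H^{r}(\Omega)$ --- and that Theorem~\ref{th1} together with Remark~\ref{bounded-operator} is indeed applicable to the auxiliary normal system $C$ with the parameter $\eta$ chosen above. A more minor technical issue is the identity $H^{r,(0)}(\Omega)=H^{r}(\Omega)$ for negative non-exceptional $r$; should one wish to avoid quoting it, the bound $\|u\|_{r,(0),\Omega}\le c\,\|u\|_{r,\Omega}$ can alternatively be obtained from the Green's-formula/duality construction underlying Theorem~\ref{th1} itself.
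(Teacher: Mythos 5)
Your two-sided reduction and the treatment of the Cauchy data are broadly sensible, but the first half of the hard direction contains a genuine error. The inequality $\|u\|_{r,(0),\Omega}\leq c\,\|u\|_{r,\Omega}$ is \emph{false} for $r<-1/2$, and so is the claim that $H^{r,(0)}(\Omega)=H^{r}(\Omega)$ up to equivalence of norms for negative non-exceptional $r$. For $r\leq0$ the map $u\mapsto\mathcal{O}u$ (extension by zero) is an \emph{isometry} of $H^{r,(0)}(\Omega)$ onto the subspace $\{w\in H^{r}(\mathbb{R}^{n}):\mathrm{supp}\,w\subseteq\overline{\Omega}\}$ --- this is exactly what the paper uses on Step~2 of the proof of Theorem~\ref{th4.13} --- and for $r<-1/2$ that subspace contains nonzero distributions supported on $\Gamma$, so it cannot be identified with the quotient space $H^{r}(\Omega)$; condition \eqref{f6.13} only guarantees that the dual of $H^{-r}(\Omega)$ is this subspace, not $H^{r}(\Omega)$. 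Concretely, take $u_{k}\in C^{\infty}(\overline{\Omega})$ concentrating on a collar of width $1/k$ about $\Gamma$ with unit mass per unit surface area: then $(u_{k},\phi)_{\Omega}\to0$ uniformly over $\phi$ in the unit ball of the closure of $C^{\infty}_{0}(\Omega)$ in $H^{-r}(\Omega)$ (such $\phi$ decay near $\Gamma$ by Hardy's inequality), so $\|u_{k}\|_{r,\Omega}\to0$, while $\mathcal{O}u_{k}$ converges in $H^{r}(\mathbb{R}^{n})$ to a nonzero multiple of the surface measure on $\Gamma$, so $\|u_{k}\|_{r,(0),\Omega}$ stays bounded away from $0$. (Equivalently: extension by zero is bounded on $H^{r}(\Omega)$ only for $|r|<1/2$.) What you actually need is $\|u\|_{r,(0),\Omega}\leq c\,\|u\|_{r,A,\omega}$, and here the $Au$-term must do real work: the boundary-layer part of $\mathcal{O}u$ is governed by the Cauchy data of $u$, which is controlled only through $A$. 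This is precisely the content of Roitberg's isomorphism theorem, and it cannot be reduced to a property of the plain Sobolev scale over $\Omega$. Your fallback suggestion --- to get the bound from the ``Green's-formula/duality construction underlying Theorem~\ref{th1}'', i.e.\ essentially from Lemma~\ref{lema1} --- is circular, since the paper derives Lemma~\ref{lema1} \emph{from} Proposition~\ref{prop6.2}.

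For the record, the paper does not prove this proposition from its own Theorem~\ref{th1} at all: the case $r>2q-1/2$ is immediate from \eqref{f6.12}, and the case $r\leq2q-1/2$ is a direct citation of the isomorphism (4.196) in \cite[Section~4.4.2]{MikhailetsMurach14} (after checking that $H^{\omega}(\Omega)$ satisfies the relevant Condition~$\mathrm{I}_{\sigma}$ there, using $\sigma_{0}(\omega)>-1/2$). Your Cauchy-data step --- completing $B$ to a Dirichlet system $(B,C)$, applying Theorem~\ref{th1} with Remark~\ref{bounded-operator} to $(A,B)$ and $(A,C)$, and inverting the triangular system of tangential operators --- is formally admissible in the paper's logical order and would yield the trace part of \eqref{f6.11}; but be aware that the boundedness assertions of Theorem~\ref{th1} rest on Proposition~\ref{prop6.1}, whose proof in the cited monograph goes through the very same Roitberg isomorphism, so even that part of your argument is not an independent derivation. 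The missing, and essential, ingredient is the estimate of $\|u\|_{r,(0),\Omega}$ by $\|u\|_{r,A,\omega}$ for $r<-1/2$.
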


Recall that \eqref{f6.14} is the norm in $H^{r}_{A,\omega}(\Omega)$. If $r>2q-1/2$, then Proposition~\ref{prop6.2} follows immediately from
\eqref{f6.12}. If $r\leq2q-1/2$, then this proposition is a direct consequence of the isomorphism (4.196) from monograph \cite[Section~4.4.2, Proof of Theorem 4.25]{MikhailetsMurach14}. We put $\sigma:=r-2q$, $L:=A$ and $X^{\sigma}(\Omega):=H^{\omega}(\Omega)$ in this isomorphism and note that the space $H^{\omega}(\Omega)$ satisfies Condition~$\mathrm{I}_{\sigma}$ (used in \cite[Theorem 4.25]{MikhailetsMurach14}) in view of \cite[Theorem~4.26]{MikhailetsMurach14} and the continuous embedding $H^{\omega}(\Omega)\hookrightarrow H^{\lambda}(\Omega)$ for some $\lambda>-1/2$.

\begin{proof}[Proof of Lemma $\ref{lema1}$]
Assume in addition that $r$ satisfies \eqref{f6.13}. Choosing functions $u,w\in C^{\infty}(\overline{\Omega})$ arbitrarily, we get
\begin{align*}
|(u,w)_{\Omega}|&\leq\|u\|_{r,(0),\Omega}\cdot\|w\|_{-r,\Omega}\leq
\|u\|_{r,(2q),A,\omega}\cdot\|w\|_{-r,\Omega}\leq
c_1\|u\|_{r,A,\omega}\cdot\|w\|_{-r,\Omega}\\
&\leq c\,\|u\|_{\alpha,A,\omega}\cdot\|w\|_{-r,\Omega}
\end{align*}
by \eqref{f6.10}, \eqref{f6.11}, \eqref{f6.15}, Proposition~\ref{prop6.2}, and \eqref{f3.3}; here, $c_1$ and $c$ are certain positive numbers that do not depend on $u$ and $w$. If $r$ is not subject to \eqref{f6.13}, then we choose a non-half-integer number $r_1$ such that  $r<r_1<\sigma_0(\alpha)$. As has been proved,
\begin{equation*}
|(u,w)_{\Omega}|\leq c\,\|u\|_{\alpha,A,\omega}\cdot\|w\|_{-r_1,\Omega}\leq
c\,\|u\|_{\alpha,A,\omega}\cdot\|w\|_{-r,\Omega}.
\end{equation*}
The required bound \eqref{lema1-bound} is substantiated.
\end{proof}

\begin{proof}[Proof of Theorem~$\ref{th2}$]
According to Theorem~\ref{th1}, the bounded linear operator \eqref{isom} is a bijection. Hence, it is an isomorphism due to the Banach theorem on inverse operator.
\end{proof}

\begin{proof}[Proof of Theorem~$\ref{th4.6}$]
By the hypotheses of the theorem, we have the inclusion
\begin{equation*}
(g,f)=(A,B)u\in\mathcal{H}^{\eta,\varphi}(\Omega,\Gamma),
\end{equation*}
with $u\in H^{s+2q}_{A,\lambda}(\Omega)\supset H^{\varphi\varrho^{2q}}_{A,\eta}(\Omega)$ for certain  $s<\sigma_0(\varphi)$. Therefore,
\begin{equation*}
(g,f)\in\mathcal{H}^{\eta,\varphi}(\Omega,\Gamma)
\cap(A,B)\bigl(H^{s+2q}_{A,\lambda}(\Omega)\bigr)= (A,B)\bigl(H^{\varphi\varrho^{2q}}_{A,\eta}(\Omega)\bigr),
\end{equation*}
the last equality being due to Theorem~\ref{th1}. Hence, along with the condition $(A,B)u=(g,f)$, the equality $(A,B)v=(g,f)$ holds true for certain $v\in H^{\varphi\rho^{2q}}_{A,\eta}(\Omega)$. Thus, the distribution $w:=u-v\in H^{s+2q}_{A,\lambda}(\Omega)$ satisfies $(A,B)w=0$. Therefore, $w\in N\subset C^{\infty}(\overline{\Omega})$ due to Theorem~\ref{th1}, which gives the required inclusion $u=v+w\in H^{\varphi\rho^{2q}}(\Omega)$.
\end{proof}

\begin{proof}[Proof of Theorem~$\ref{th4.7}$]
We arbitrarily choose a function $\chi\in C^{\infty}(\overline{\Omega})$ such that $\mathrm{supp}\,\chi\subset\Omega_0\cup\nobreak\Gamma_0$ and take a function $\zeta\in C^{\infty}(\overline{\Omega})$ such that $\mathrm{supp}\,\zeta\subset\Omega_0\cup\Gamma_0$ and $\zeta=1$ in a certain neighbourhood $V$ of $\mathrm{supp}\,\chi$ (in the topology of $\overline{\Omega}$, of course). Owing to the hypotheses of the theorem, we have the inclusion
\begin{equation}\label{incl-(f,g)}
\zeta(f,g):=\bigl(\zeta f,(\zeta\!\upharpoonright\!\Gamma)g_1,\ldots, (\zeta\!\upharpoonright\!\Gamma)g_q\bigr)
\in\mathcal{H}^{\eta,\varphi}(\Omega,\Gamma).
\end{equation}
Besides, $u\in H^{s+2q}_{A,\ell}(\Omega)\supset H^{\varphi\varrho^{2q}}_{A,\eta}(\Omega)$ for certain  $s<\sigma_0(\varphi)$ and $\ell\in(-1/2,\sigma_0(\eta))$. We may and will assume that the number $r:=s+2q$ satisfies \eqref{f6.13}.

The space $H^{s+2q}_{A,\ell}(\Omega)$ is not closed with respect to the multiplication by functions from $C^{\infty}(\overline{\Omega})$. This is a reason why we may not deduce this theorem from only Theorem~\ref{th4.6} in a usual manner (see, e.g., \cite[Chapter~III, Section~6, Subsection~11]{Berezansky68} or \cite[Section~4.1.2]{MikhailetsMurach14}). We have to use Roitberg's theorem on local regularity of solutions to the elliptic problem \cite[Theorem~7.2.1]{Roitberg96}. This theorem deals with the solutions of class $H^{s+2q,(2q)}(\Omega)$.

Owing to Proposition~\ref{prop6.2} for $\omega(t)\equiv t^{\ell}$, the identity mapping on $C^{\infty}(\overline{\Omega})$ extends uniquely (by continuity) to a bounded linear operator
\begin{equation}\label{op-O}
O:H^{s+2q}_{A,\ell}(\Omega)\to H^{s+2q,(2q)}(\Omega).
\end{equation}
Let us show that this operator is one-to-one. Suppose that $Ou=0$ for certain $u\in H^{s+2q}_{A,\ell}(\Omega)$. Then there exists a sequence $(u_k)_{k=1}^{\infty}\subset C^{\infty}(\overline{\Omega})$ such that $u_k\to u$ in $H^{s+2q}_{A,\ell}(\Omega)$ and $u_k\to0$ in $H^{s+2q,(2q)}(\Omega)$ as $k\to\infty$. Therefore, $Au_k\to Au$ in $H^{\ell}(\Omega)$ and $Au_k\to0$ in $H^{s,(0)}(\Omega)$, with the latter convergence being due to \cite[Lemma~2.3.1(ii)]{Roitberg96}. This implies $Au=0$ in view of the continuous embedding of the space $H^{\ell}(\Omega)=H^{\ell,(0)}(\Omega)$ in $H^{s,(0)}(\Omega)$ (see \eqref{f6.12}, and take $s<-1/2<\ell$ into account). Hence, $\|u_k\|_{s+2q,(2q),A,\ell}\to0$, which implies $\|u_k\|_{s+2q,A,\ell}\to0$ by Proposition~\ref{prop6.2}. Thus, $u=0$, i.e. the operator \eqref{op-O} is one-to-one. It sets the continuous embedding of $H^{s+2q}_{A,\ell}(\Omega)$ in $H^{s+2q,(2q)}(\Omega)$. We may therefore consider the distribution $u\in H^{s+2q}_{A,\ell}(\Omega)$ as an element of the Roitberg's space $H^{s+2q,(2q)}(\Omega)$.

According to \eqref{incl-(f,g)} and Theorem~\ref{th2}, there exists a distribution
\begin{equation}\label{v-in}
v\in H^{\varphi\rho^{2q}}_{A,\eta}(\Omega)
\end{equation}
such that
\begin{equation*}
(A,B)v=\mathcal{P}^{+}(\zeta(f,g))\in
\mathcal{H}^{\eta,\varphi}(\Omega,\Gamma).
\end{equation*}
Putting
\begin{equation*}
w:=u-v\in H^{s+2q}_{A,\ell}(\Omega)\subset H^{s+2q,(2q)}(\Omega),
\end{equation*}
we see that
\begin{equation}\label{(A,B)w}
(A,B)w=(f,g)-\mathcal{P}^{+}(\zeta(f,g))=:F\in H^{s,(0)}(\Omega)\oplus
\bigoplus_{j=1}^{q}H^{s+2q-m_j-1/2}(\Gamma)
\end{equation}
because $(f,g)=(A,B)u$ and $u\in H^{s+2q}_{A,\ell}(\Omega)$ and because the space $\mathcal{H}^{\eta,\varphi}(\Omega,\Gamma)$ is narrower than the orthogonal sum in \eqref{(A,B)w}. Besides,
\begin{align*}
\chi_{1}F=&
\chi_{1}\bigl((f,g)-\zeta(f,g)+(I-\mathcal{P}^{+})(\zeta(f,g))\bigr)\\
=&\chi_{1}(I-\mathcal{P}^{+})(\zeta(f,g))\in C^{\infty}(\overline{\Omega})\times\{0\}^{q}
\end{align*}
for every function $\chi_{1}\in C^{\infty}(\overline{\Omega})$ subject to $\mathrm{supp}\,\chi_{1}\subset V$ (recall that $\zeta=1$ on $V$). Hence, \begin{equation}\label{chi_{1}w}
\chi_{1}w\in\bigcap_{r>s+2q}H^{r,(2q)}(\Omega)=
C^{\infty}(\overline{\Omega})
\end{equation}
for every above-mentioned function $\chi_{1}$. This conclusion is due to \cite[Theorem~7.2.1]{Roitberg96} (or \cite[Theorem~7.2.2]{Roitberg96} if $\Gamma_0=\emptyset$), with the equation in \eqref{chi_{1}w} holding in view of \eqref{f6.12}. Taking $\chi_{1}:=\chi$, we obtain
\begin{equation*}
\chi u=\chi v+\chi w\in H^{\varphi\rho^{2q}}(\Omega)
\end{equation*}
by \eqref{v-in} and \eqref{chi_{1}w}. Thus, $u\in H^{\varphi\rho^{2q}}_{\mathrm{loc}}(\Omega_0,\Gamma_0)$ due to the arbitrariness of our choice of $\chi$.
\end{proof}

\begin{proof}[Proof of Theorem~$\ref{th4.9}$]
We choose a sufficiently small number $\varepsilon>0$, put $U_{\varepsilon}:=\{x\in U:\mathrm{dist}(x,\partial U)>\varepsilon\}$, $\Omega_{\varepsilon}:=\Omega\cap U_{\varepsilon}$, and $\Gamma_{\varepsilon}:=\Gamma\cap U_{\varepsilon}$, and consider a function $\chi_{\varepsilon}\in C^\infty(\overline\Omega)$ such that $\mbox{supp}\,\chi_{\varepsilon}\subset\Omega_0\cup\Gamma_0$ and $\chi_{\varepsilon}=1$ on $\Omega_{\varepsilon}\cup\Gamma_{\varepsilon}$. Owing to Theorem~\ref{th4.7}, the inclusion $\chi_{\varepsilon}u\in H^{\varphi\varrho^{2q}}(\Omega)$ holds true. Hence, there exists a distribution $w_{\varepsilon}\in H^{\varphi\varrho^{2q}}(\mathbb{R}^{n})$ such that $w_{\varepsilon}=\chi_{\varepsilon}u=u$ on $\Omega_{\varepsilon}$. By \eqref{Hermander-embedding}, condition \eqref{int-cond} implies $w_{\varepsilon}\in C^{p}(\mathbb{R}^{n})$. Therefore,
\begin{equation*}
(u,v)_{\Omega}=
\int\limits_{\Omega_{\varepsilon}}
w_{\varepsilon}(x)\overline{v(x)}dx=
\int\limits_{\Omega_{0}}u_{0}(x)\overline{v(x)}dx
\end{equation*}
for every $v\in C^{\infty}_{0}(\Omega)$ subject to $\mathrm{supp}\,v\subset\Omega_{\varepsilon}$. Here, the function $u_{0}\in C^{p}(\Omega_{0}\cup\Gamma_{0})$ is defined by the formula $u_{0}:=w_{\varepsilon}$ on $\Omega_{\varepsilon}\cup\Gamma_{\varepsilon}$ whenever $0<\varepsilon\ll1$. This function is well defined because $0<\delta<\varepsilon$ implies that $w_{\delta}=w_{\varepsilon}$ on $\Omega_{\varepsilon}\cup\Gamma_{\varepsilon}$. Thus, $u$ satisfies \eqref{def-C^p}, i.e. $u\in C^{p}(\Omega_{0}\cup\Gamma_{0})$.
\end{proof}

Let us now substantiate Remark~\ref{rem4.10}. Namely, we suppose that the implication \eqref{implication} holds true for a certain integer $p\geq0$ and will prove that $\varphi$ satisfies \eqref{int-cond}. Choosing a distribution $u\in H^{\varphi\rho^{2q}}_{A,\eta}(\Omega)$ arbitrarily, we define the right-hand sides
of the problem \eqref{f1}, \eqref{f2}. They satisfy the inclusion $(f,g)\in\mathcal{H}^{\varphi,\eta}(\Omega,\Gamma)$ and, hence, the hypotheses of Theorem~$\ref{th4.7}$. Therefore,
\begin{equation}\label{f6.21}
u\in H^{\varphi\rho^{2q}}_{A,\eta}(\Omega)\;\Longrightarrow\;
u\in C^{p}(\Omega_{0}\cup\Gamma_{0})
\end{equation}
by \eqref{implication}. We now choose a distribution $g_1\in H^{\varphi\rho^{2q-1/2}}(\Gamma)$ arbitrarily, put $g_j:=0$ whenever $2\leq j\leq q$, and consider the regular elliptic problem that consists of the equation \eqref{f1} and boundary conditions
\begin{equation}\label{f6.22}
\partial^{j-1}_{\nu}u=g_j\quad\mbox{on}\;\Gamma,\quad j=1,...,q.
\end{equation}
Here, $\partial_{\nu}$ is the operator of differentiation along the inner normal $\nu$ to $\Gamma=\partial\Omega$. According to \eqref{sum2b}, there exists a function $f\in N^{+}\subset C^{\infty}(\overline{\Omega})$ such that $\mathcal{P}^{+}(0,g)=(f,g)$.
We take this function to be the right-hand side of \eqref{f1}. Owing to Theorem~\ref{th2}, the elliptic problem \eqref{f1}, \eqref{f6.22} has a solution $u\in H^{\varphi\rho^{2q}}_{A,\eta}(\Omega)$. This solution belongs to $C^{p}(\Omega_{0}\cup\Gamma_{0})$ due to \eqref{f6.21}. Hence, the restriction of every distribution $g_1\in H^{\varphi\rho^{2q-1/2}}(\Gamma)$ to $\Gamma_{0}$ pertains to
$C^{p}(\Gamma_{0})$. Passing to local coordinates on $\Gamma$, we deduce plainly from this fact that
\begin{equation}\label{f6.23}
\bigl\{w\in H^{\varphi\rho^{2q-1/2}}(\mathbb{R}^{n-1}): \mathrm{supp}\,w\subset V\bigr\}\subset C^p(\mathbb{R}^{n-1})
\end{equation}
for a certain open subset $V\neq\emptyset$ of $\mathbb{R}^{n-1}$. The inclusion \eqref{f6.23} implies \eqref{int-cond} due to \eqref{Hermander-embedding}, which substantiates Remark~\ref{rem4.10}.

\begin{proof}[Proof of Theorem~$\ref{th4.12}$]
According to Peetre's lemma \cite[Lemma~3]{Peetre61}, this theorem is a consequence of the facts that the operator \eqref{f15} has finite-dimensional kernel and closed range by Theorem~\ref{th1} and that the embedding $H^{\varphi\varrho^{2q}}_{A,\eta}(\Omega)\hookrightarrow H^{\varphi\varrho^{2q-\ell}}(\Omega)$ is compact (in fact, the continuity of this embedding is enough). However, it is not difficult to prove this theorem not referring to the mentioned lemma. Namely, using the decomposition \eqref{sum1}, we represent an arbitrary distribution $u\in H^{\varphi\rho^{2q}}_{A,\eta}(\Omega)$ in the form $u=u_{0}+u_{1}$ with $u_{0}:=(1-P)u\in N$ and $u_{1}:=Pu$. Owing to Theorem~\ref{th2}, we get
\begin{equation}\label{f6.24}
\|u_1\|_{\varphi\rho^{2q},\Omega}\leq
\|u_1\|_{\varphi\rho^{2q},A,\eta}\leq
c_{1}\|(A,B)u_1\|_{\eta,\varphi,\Omega,\Gamma}=
c_{1}\|(f,g)\|_{\eta,\varphi,\Omega,\Gamma},
\end{equation}
with $c_1$ being the norm of the inverse operator to the isomorphism \eqref{isom}. Since the space $N$ is finite-dimensional, all norms are equivalent on $N$, specifically, the norms in $H^{\varphi\rho^{2q}}(\Omega)$ and $H^{\varphi\rho^{2q-\ell}}(\Omega)$. It follows hence from $u_{0}\in N$ and \eqref{f6.24} that
\begin{align*}
\|u_0\|_{\varphi\rho^{2q},\Omega}&\leq c_{0}\|u_0\|_{\varphi\rho^{2q-\ell},\Omega}\leq
c_{0}\|u\|_{\varphi\rho^{2q-\ell},\Omega}+
c_{0}\|u_1\|_{\varphi\rho^{2q-\ell},\Omega}\\
&\leq c_{0}\|u\|_{\varphi\rho^{2q-\ell},\Omega}+
c_{0}\|u_1\|_{\varphi\rho^{2q},\Omega}\\
&\leq c_{0}\|u\|_{\varphi\rho^{2q-\ell},\Omega}+
c_{0}c_{1}\|(f,g)\|_{\eta,\varphi,\Omega,\Gamma};
\end{align*}
here, $c_0$ is a positive number that does not depend on $u$. This together with \eqref{f6.24} yields the required estimate \eqref{f4.20}. It remains to remark that $u$ from Theorem~\ref{th4.12} belongs to $H^{\varphi\rho^{2q}}_{A,\eta}(\Omega)$ by Theorem~\ref{th4.6}.
\end{proof}

\begin{proof}[Proof of Theorem~$\ref{th4.13}$]
Note previously that we may not deduce this theorem from Theorem~\ref{th4.12} by a usual reasoning (compare, e.g., with \cite[Section~4.1.2, pp. 170--172]{MikhailetsMurach14} or \cite[Section~7.2, p.~216]{Roitberg96}) because the right-hand side of \eqref{f4.20} contains the norm $\|f\|_{\eta,\Omega}$ instead of the norm $\|f\|_{\varphi,\Omega}$, which is necessary to perform this reasoning. Besides, the hypothesis $Au\in H^{-1/2+}(\Omega)$ does not imply that $A(\chi u)\in H^{-1/2+}(\Omega)$. However, we will need the latter inclusion if we use Theorem~\ref{th4.12} for $\chi u$ instead of $u$ according to the usual reasoning. Thus, we have to choose another way to prove Theorem~\ref{th4.13}. This way involves the Roitberg spaces $H^{r,(2q)}(\Omega)$, with $r\in\mathbb{R}$, used in our previous proofs. We divide our reasoning into four steps.

\emph{Step~$1$.} According to \cite[Theorem 4.1.1]{Roitberg96}, the mapping \eqref{mapping} extends uniquely (by continuity) to a Fredholm bounded operator
\begin{equation}\label{f6.25}
(A,B):H^{s+2q,(2q)}(\Omega)\rightarrow H^{s,(0)}(\Omega)\oplus
\bigoplus_{j=1}^{q}H^{s+2q-m_j-1/2}(\Gamma)
\end{equation}
for every $s\in\mathbb{R}$. The kernel and index of this operator do not depend on $s$ and are equal to $N$ and $\dim N-\dim N^{+}$ resp. (Observe in view of \eqref{f6.12} that \eqref{f6.25} coincides with \eqref{f9} whenever $\varphi(t)\equiv t^{s}$ and $s>-1/2$.) We interpolate the spaces involved in \eqref{f6.25} and use the interpolation parameter $\psi$ defined by formula \eqref{f5.1} in which $r_{0}:=s_{0}$, $r_{1}:=s_{1}$, and
$\alpha:=\varphi$. Owing to Proposition~\ref{prop5.2}, the equality
\begin{equation}\label{f6.26}
[H^{s_{0}+r}(G),H^{s_{1}+r}(G)]_{\psi}=H^{\varphi\rho^{r}}(G)
\quad\mbox{for every}\quad r\in\mathbb{R}
\end{equation}
holds true up to equivalence of norms, with $G\in\{\mathbb{R}^{n},\Omega,\Gamma\}$. Given $r\in\mathbb{R}$, we define the Hilbert spaces
\begin{equation*}
X_{r}:=[H^{s_{0}+r,(2q)}(\Omega),H^{s_{1}+r,(2q)}(\Omega)]_{\psi}
\end{equation*}
and
\begin{equation*}
Y_{r}:=[H^{s_{0}+r,(0)}(\Omega),H^{s_{1}+r,(0)}(\Omega)]_{\psi}.
\end{equation*}
Consider the Fredholm operators \eqref{f6.25} for each $s\in\{s_{0}+r,s_{1}+r\}$. Interpolating them with the function parameter $\psi$, we conclude by \cite[Theorem~1.7]{MikhailetsMurach14} that the restriction of the mapping \eqref{f6.25}, where $s=s_{0}+r$, on the space $X_{2q+r}$ is a Fredholm bounded operator
\begin{equation}\label{f6.27}
(A,B):X_{2q+r}\to Y_{r}\oplus Z_{r}.
\end{equation}
Here, the Hilbert space
\begin{equation*}
Z_{r}:=\bigoplus_{j=1}^{q}H^{\varphi\varrho^{2q+r-m_j-1/2}}(\Gamma)
\end{equation*}
equals
\begin{equation*}
\bigoplus_{j=1}^{q}
[H^{s_{0}+r+2q-m_j-1/2}(\Gamma),H^{s_{1}+r+2q-m_j-1/2}(\Gamma)]_{\psi}
\end{equation*}
up to equivalence of norms due to \eqref{f6.26}.

Let $0\leq k\in\mathbb{Z}$, and let $\zeta_{1}\in C^{\infty}(\overline{\Omega})$ satisfy $\zeta_{1}=1$ in a neighbourhood of $\mathrm{supp}\,\chi$. We will prove by induction in $k$ that
\begin{equation}\label{f6.28}
\|\chi u\|_{X_{2q}}\leq c_{0}\bigl(\|\zeta_{1}Au\|_{Y_{0}}+\|\zeta_{1}Bu\|_{Z_{0}}+
\|\zeta_{1}u\|_{X_{2q-k}}\bigr)
\end{equation}
for every $u\in C^{\infty}(\overline{\Omega})$ with a certain number $c_{0}>0$ that does not depend on $u$. (We use the standard notation for the norms in the spaces $X_{r}$, $Y_{r}$, and $Z_{r}$.) Let us $c_{1}$, $c_{2}$,... denote some positive numbers that are independent of $u$.

If $k=0$, then \eqref{f6.28} follows from
\begin{equation*}
\|\chi u\|_{X_{2q}}=\|\chi\zeta_{1}u\|_{X_{2q}}\leq c_{1}\|\zeta_{1}u\|_{X_{2q}}.
\end{equation*}
The latter inequality is true because the operator of the multiplication by a function from $C^{\infty}(\overline{\Omega})$ is bounded on every space $H^{r,(2q)}(\Omega)$ (as well as on $H^{r,(0)}(\Omega)$); see \cite[Corollary 2.3.1]{Roitberg96}. Assume now that \eqref{f6.28} holds true for a certain integer $k=p\geq0$, and prove \eqref{f6.28} in the case of $k=p+1$.

Consider a function $\zeta_{0}\in C^{\infty}(\overline{\Omega})$ such that  $\zeta_{0}=1$ in a neighbourhood of $\mathrm{supp}\,\chi$ and that $\zeta_{1}=1$ in a neighbourhood of $\mathrm{supp}\,\zeta_{0}$. By the inductive assumption \eqref{f6.28}, we have
\begin{equation}\label{f6.29}
\|\chi u\|_{X_{2q}}\leq c_{0}\bigl(\|\zeta_{0}Au\|_{Y_{0}}+\|\zeta_{0}Bu\|_{Z_{0}}+
\|\zeta_{0}u\|_{X_{2q-p}}\bigr).
\end{equation}
Since the bounded operator \eqref{f6.27}, where $r:=-p$, is Fredholm, we have the estimate
\begin{equation}\label{f6.30}
\|\zeta_{0}u\|_{X_{2q-p}}\leq c_{2}\bigl(\|A(\zeta_{0}u)\|_{Y_{-p}}+\|B(\zeta_{0}u)\|_{Z_{-p}}+
\|\zeta_{0}u\|_{X_{2q-p-1}}\bigr)
\end{equation}
due to the above-mentioned lemma by Peetre \cite[Lemma~3]{Peetre61}. Interchanging the PDO $A$ with the operator of multiplication by $\zeta_{0}$, we get
\begin{equation*}
A(\zeta_{0}u)=A(\zeta_{0}\zeta_{1}u)=\zeta_{0}A(\zeta_{1}u)+A'(\zeta_{1}u)=
\zeta_{0}Au+A'(\zeta_{1}u).
\end{equation*}
Here, $A'$ is a certain linear PDO on $\overline{\Omega}$ whose  coefficients belong to $C^{\infty}(\overline{\Omega})$ and whose order $\mathrm{ord}\,A'\leq2q-1$. Analogously,
\begin{equation*}
B(\zeta_{0}u)=\zeta_{0}Bu+B'(\zeta_{1}u);
\end{equation*}
here, $B':=(B_{1}',\ldots,B_{q}')$ where each $B_{j}'$ is a certain linear boundary PDO on $\Gamma$ with coefficients of class $C^{\infty}(\Gamma)$ and of order $\mathrm{ord}\,B_{j}'\leq m_{j}-1$. By \cite[Lemma~2.3.1]{Roitberg96} and the interpolation, the PDOs $A'$ and $B_{j}'$ act continuously between the following spaces:
\begin{align*}
A'&:X_{2q-p-1}=
[H^{s_{0}+2q-p-1,(2q)}(\Omega),H^{s_{1}+2q-p-1,(2q)}(\Omega)]_{\psi}\\
&\to[H^{s_{0}-p,(2q)}(\Omega),H^{s_{1}-p,(2q)}(\Omega)]_{\psi}=Y_{-p}
\end{align*}
and
\begin{equation*}
B_{j}':X_{2q-p-1}\to
[H^{s_{0}+2q-p-m_{j}-1/2}(\Gamma),H^{s_{1}+2q-p-m_{j}-1/2}(\Gamma)]_{\psi}
=H^{\varphi\varrho^{2q-p-m_j-1/2}}(\Gamma)
\end{equation*}
in view of \eqref{f6.26}. Thus, it follows from \eqref{f6.30} that
\begin{align*}
\|\zeta_{0}u\|_{X_{2q-p}}&\leq c_{2}\bigl(\|\zeta_{0}Au\|_{Y_{-p}}+
\|A'(\zeta_{1}u)\|_{Y_{-p}}+\|\zeta_{0}Bu\|_{Z_{-p}}+
\|B'(\zeta_{1}u)\|_{Z_{-p}}+\|\zeta_{0}u\|_{X_{2q-p-1}}\bigr)\\
&\leq c_{3}\bigl(\|\zeta_{0}Au\|_{Y_{-p}}+
\|\zeta_{1}u\|_{X_{2q-p-1}}+\|\zeta_{0}Bu\|_{Z_{-p}}+
\|\zeta_{1}u\|_{X_{2q-p-1}}+\|\zeta_{0}u\|_{X_{2q-p-1}}\bigr)\\
&=c_{3}\bigl(\|\zeta_{0}\zeta_{1}Au\|_{Y_{-p}}+
\|\zeta_{0}\zeta_{1}Bu\|_{Z_{-p}}+2\|\zeta_{1}u\|_{X_{2q-p-1}}+
\|\zeta_{0}\zeta_{1}u\|_{X_{2q-p-1}}\bigr)\\
&\leq c_{4}\bigl(\|\zeta_{1}Au\|_{Y_{-p}}+
\|\zeta_{1}Bu\|_{Z_{-p}}+\|\zeta_{1}u\|_{X_{2q-p-1}}\bigr).
\end{align*}
Applying this estimate and
\begin{equation*}
\|\zeta_{0}Au\|_{Y_{0}}+\|\zeta_{0}Bu\|_{Z_{0}}=
\|\zeta_{0}\zeta_{1}Au\|_{Y_{0}}+\|\zeta_{0}\zeta_{1}Bu\|_{Z_{0}}
\leq c_{5}\bigl(\|\zeta_{1}Au\|_{Y_{0}}+\|\zeta_{1}Bu\|_{Z_{0}}\bigr)
\end{equation*}
to \eqref{f6.29}, we obtain
\begin{align*}
\|\chi u\|_{X_{2q}}&\leq c_{0}c_{5}\bigl(\|\zeta_{1}Au\|_{Y_{0}}+\|\zeta_{1}Bu\|_{Z_{0}}\bigr)+
c_{0}c_{4}\bigl(\|\zeta_{1}Au\|_{Y_{-p}}+\|\zeta_{1}Bu\|_{Z_{-p}}+
\|\zeta_{1}u\|_{X_{2q-p-1}}\bigr)\\
&\leq c_{6}\bigl(\|\zeta_{1}Au\|_{Y_{0}}+\|\zeta_{1}Bu\|_{Z_{0}}+
\|\zeta_{1}u\|_{X_{2q-p-1}}\bigr).
\end{align*}
Here, we use the continuous embeddings $Y_{0}\hookrightarrow Y_{-p}$ and
$Z_{0}\hookrightarrow Z_{-p}$, which hold true due to the definitions of the involved spaces via the interpolation. Thus, the estimate \eqref{f6.28} is proved for $k=p+1$ under the assumption that this estimate is valid for $k=p$. Hence, we have proved the estimate for every integer $k\geq0$.

\emph{Step~$2$.} We will prove some relations between norms involved in the obtained inequality \eqref{f6.28} and the required estimate \eqref{f4.21}. First let us show that
\begin{equation}\label{f6.31}
\|v\|_{\varphi\varrho^{2q},\Omega}\leq \widetilde{c}_{1}\|v\|_{X_{2q}}
\quad\mbox{for every}\quad v\in C^{\infty}(\overline{\Omega})
\end{equation}
with some number $\widetilde{c}_{1}>0$ that does not depend on $v$.
Given $v\in C^{\infty}(\overline{\Omega})$, we put $(\mathcal{O}v)(x):=v(x)$ if $x\in\overline{\Omega}$ and put $(\mathcal{O}v)(x):=0$ if $x\in\mathbb{R}^{n}\setminus\overline{\Omega}$.
As is known (see, e.g., \cite[Theorem 4.8.1]{Triebel95}), the mapping $v\mapsto\mathcal{O}v$, where $v\in C^{\infty}(\overline{\Omega})$, extends uniquely (by continuity) to an isometric isomorphism between $H^{r,(0)}(\Omega)$ and the subspace $\{w\in H^{r}(\mathbb{R}^{n}):\mathrm{supp}\,w\subseteq\overline{\Omega}\}$ of $H^{r}(\mathbb{R}^{n})$ provided that $r\leq0$. According to the definition of $H^{r,(2q)}(\Omega)$, we therefore get
\begin{equation*}
\|v\|_{r,\Omega}\leq\|\mathcal{O}v\|_{r,\mathbb{R}^{n}}=\|v\|_{r,(0),\Omega}
\leq\|v\|_{r,(2q),\Omega}\quad\mbox{whenever}\quad r<0.
\end{equation*}
Besides,
\begin{equation*}
\|v\|_{r,\Omega}=\|v\|_{r,(0),\Omega}\leq\|v\|_{r,(2q),\Omega}
\quad\mbox{whenever}\quad r\in[0,\infty)\setminus E_{2q}.
\end{equation*}
Hence, the identity mapping on $C^{\infty}(\overline{\Omega})$ extends uniquely (by continuity) to a bounded linear operator
\begin{equation*}
I_{2q}:H^{r,(2q)}(\Omega)\to H^{r}(\Omega)\quad\mbox{for every}\quad
r\in\mathbb{R}
\end{equation*}
(the $r\in E_{2q}$ case is treated with the help of the interpolation). Considering this operator for $r\in\{s_{0}+2q,s_{1}+2q\}$ and then interpolating with the function parameter $\psi$, we conclude by \eqref{f6.26} that the above-mentioned identity mapping extends uniquely to a bounded linear operator
\begin{equation*}
I_{2q}:X_{2q}=[H^{s_{0}+2q,(2q)}(\Omega),H^{s_{1}+2q,(2q)}(\Omega)]_{\psi}
\to[H^{s_{0}+2q}(\Omega),H^{s_{1}+2q}(\Omega)]_{\psi}=
H^{\varphi\varrho^{2q}}(\Omega).
\end{equation*}
This yields the required inequality \eqref{f6.31}.

Let us now prove that
\begin{equation}\label{f6.33}
\|v\|_{Y_{0}}\leq\widetilde{c}_{2}\|v\|_{\eta,\Omega}
\quad\mbox{for every}\quad v\in C^{\infty}(\overline{\Omega})
\end{equation}
with some number $\widetilde{c}_{2}>0$ that does not depend on $v$. If $\sigma_{1}(\varphi)\geq-1/2$, then $\eta(t)\equiv t^{\lambda}\psi(t^{s_1-\lambda})$ due to \eqref{f6.7b} and then
\begin{align*}
H^{\eta}(\Omega)&=[H^{\lambda}(\Omega),H^{s_1}(\Omega)]_{\psi}=
[H^{\lambda,(0)}(\Omega),H^{s_1,(0)}(\Omega)]_{\psi}\\
&\hookrightarrow[H^{s_0,(0)}(\Omega),H^{s_1,(0)}(\Omega)]_{\psi}=Y_{0}
\end{align*}
due to Proposition \ref{prop5.2}, formula \eqref{f6.12}, and the inequalities $s_0<-1/2<\lambda<s_1$. If $\sigma_{1}(\varphi)<-1/2$, then
\begin{equation*}
H^{\eta}(\Omega)=H^{\lambda}(\Omega)=H^{\lambda,(0)}(\Omega)\hookrightarrow
H^{s_1,(0)}(\Omega)\hookrightarrow Y_{0}
\end{equation*}
due to \eqref{f6.12} and $s_1<-1/2<\lambda$. Note that the written equalities of spaces hold true up to equivalence of norms. Thus, we have the continuous embedding $H^{\eta}(\Omega)\hookrightarrow Y_{0}$ in both cases, which gives \eqref{f6.33}.

Choose $k\in\mathbb{Z}$ such that $k\geq s_1+2q$ and $k\geq s_1-s_0+\ell$, and put $s:=[s_1]+2q-k\leq0$ (as usual, $[s_1]$ stands for the integral part of $s_1$). Then we have the continuous embeddings
\begin{equation}\label{f6.34}
H^{s,(2q)}(\Omega)\hookrightarrow [H^{s_0+2q-k,(2q)}(\Omega),H^{s_1+2q-k,(2q)}(\Omega)]_{\psi}=X_{2q-k}
\end{equation}
and
\begin{equation}\label{f6.35}
H^{\varphi\varrho^{2q-\ell}}(\Omega)\hookrightarrow H^{s_0+2q-\ell}(\Omega)\hookrightarrow H^{s}(\Omega)
\end{equation}
in view of \eqref{f6.26}. Applying \eqref{f6.31}--\eqref{f6.34} to \eqref{f6.28}, we conclude that
\begin{equation}\label{f6.36}
\|\chi u\|_{\varphi\varrho^{2q},\Omega}\leq\widetilde{c}\,
\bigl(\|\zeta_{1}(A,B)u\|_{\eta,\varphi,\Omega,\Gamma}+
\|\zeta_{1}u\|_{s,(2q),\Omega}\bigr)
\end{equation}
for every $u\in C^{\infty}(\overline{\Omega})$, with the number $\widetilde{c}>0$ being independent of~$u$.

\emph{Step~$3$.} Assume on this step that $u\in C^{\infty}(\overline{\Omega})$, and deduce the required estimate \eqref{f4.21} from \eqref{f6.36} under this assumption. Let $V$ be an open set from the topology on $\overline{\Omega}$ such that $\mathrm{supp}\,\chi\subset V$ and that $\zeta=1$ on $\overline{V}$. We may and do choose $V$ so that $V_0:=V\cap\Omega$ is an open domain in $\mathbb{R}^{n}$ with infinitely smooth boundary. Then the Roitberg space $H^{s,(2q)}(V_0)$ is well defined on $V_0$, with $\|\cdot\|_{s,(2q),V_0}$ denoting the norm in this space; recall that $s:=[s_1]+2q-k\leq0$. Let $v$ be the restriction of $u$ to $\overline{V}$; thus, $v\in C^{\infty}(\overline{V})$.

Let a function $\zeta_{1}\in C^{\infty}(\overline{\Omega})$ satisfy the conditions $\mathrm{supp}\,\zeta_{1}\subset V$ and $\zeta_{1}=1$ in a neighbourhood of $\mathrm{supp}\,\chi$. We then have the equivalence of norms
\begin{equation}\label{f6.37}
\|\zeta_{1}u\|_{s,(2q),\Omega}\asymp\|\zeta_{1}v\|_{s,(2q),V_0}
\quad\mbox{with respect to}\quad u\in C^{\infty}(\overline{\Omega}).
\end{equation}
Indeed, according to \cite[Theorem 6.1.1]{Roitberg96}, we get
\begin{align*}
\|\zeta_{1}u\|_{s,(2q),\Omega}&\asymp
\|\zeta_{1}u\|_{s,(0),\Omega}+\|A(\zeta_{1}u)\|_{s-2q,(0),\Omega}
=\|\mathcal{O}(\zeta_{1}u)\|_{s,\mathbb{R}^{n}}+
\|\mathcal{O}A(\zeta_{1}u)\|_{s-2q,\mathbb{R}^{n}}\\
&=\|\zeta_{1}v\|_{s,(0),V_0}+\|A(\zeta_{1}v)\|_{s-2q,(0),V_0}\asymp
\|\zeta_{1}v\|_{s,(2q),V_0}
\end{align*}
with respect to $u$. Recall that $\mathcal{O}(\zeta_{1}u)$ and $\mathcal{O}A(\zeta_{1}u)$ are extensions of the functions $\zeta_{1}v$ and $A(\zeta_{1}v)$, resp., over $\mathbb{R}^{n}$ with zero, these functions being considered on $\overline{V}$ as well as on $\overline{\Omega}$.

Owing to Proposition~\ref{prop6.2}, we have the equivalence of norms
\begin{equation*}
\|v\|_{s,(2q),V_0}+\|Av\|_{\eta,V_0}\asymp
\|v\|_{s,V_0}+\|Av\|_{\eta,V_0}\quad\mbox{with respect to}\quad v\in C^{\infty}(\overline{\Omega}).
\end{equation*}
Combining it with \eqref{f6.37}, we get
\begin{align*}
\|\zeta_{1}u\|_{s,(2q),\Omega}&\asymp\|\zeta_{1}v\|_{s,(2q),V_0}\leq
c_{7}\|v\|_{s,(2q),V_0}\leq c_{7}\bigl(\|v\|_{s,(2q),V_0}+\|Av\|_{\eta,V_0}\bigr)\\
&\asymp\|v\|_{s,V_0}+\|Av\|_{\eta,V_0}=
\|\zeta v\|_{s,V_0}+\|\zeta Av\|_{\eta,V_0}\leq
\|\zeta u\|_{s,\Omega}+\|\zeta Au\|_{\eta,\Omega}\\
&\leq c_{8}\|\zeta u\|_{\varphi\varrho^{2q-\ell},\Omega}+\|\zeta Au\|_{\eta,\Omega}
\end{align*}
in view of \eqref{f6.35}. Thus, there exists a number $\widetilde{c}_{3}>0$ such that
\begin{equation*}
\|\zeta_{1}u\|_{s,(2q),\Omega}\leq\widetilde{c}_{3}
\bigl(\|\zeta u\|_{\varphi\varrho^{2q-\ell},\Omega}+\|\zeta Au\|_{\eta,\Omega}\bigr)
\end{equation*}
for every $u\in C^{\infty}(\overline{\Omega})$. Substituting this inequality into \eqref{f6.36}, we obtain
\begin{align*}
\|\chi u\|_{\varphi\varrho^{2q},\Omega}&\leq\widetilde{c}\:
\|\zeta_{1}\zeta(A,B)u\|_{\eta,\varphi,\Omega,\Gamma}+
\widetilde{c}\:\widetilde{c}_{3}\bigl(
\|\zeta u\|_{\varphi\varrho^{2q-\ell},\Omega}+\|\zeta Au\|_{\eta,\Omega}
\bigr)\\
&\leq c\,\bigl(\|\zeta(A,B)u\|_{\eta,\varphi,\Omega,\Gamma}+
\|\zeta u\|_{\varphi\rho^{2q-\ell},\Omega}\bigr),
\end{align*}
which gives \eqref{f4.21} under the assumption that $u\in C^{\infty}(\overline{\Omega})$.

\emph{Step~$4$.} Recall we must prove that the estimate
\eqref{f4.21} holds true if a distribution $u\in\mathcal{S}'(\Omega)$ satisfies the hypotheses of Theorem~$\ref{th4.7}$. Let us deduce this estimate from the $u\in C^{\infty}(\overline{\Omega})$ case investigated on the previous step. Let $V$ be an open set from the topology on $\overline{\Omega}$ such that $\overline{V}\subset\Omega_{0}\cup\Gamma_{0}$ and ($\mathrm{supp}\,\chi\subset$) $\mathrm{supp}\,\zeta\subset V$ and that $V_0:=V\cap\Omega$ is an open domain in $\mathbb{R}^{n}$ with infinitely smooth boundary $\partial V_{0}$.

Consider an arbitrary distribution $u\in\mathcal{S}'(\Omega)$ that satisfies the hypotheses of Theorem~$\ref{th4.7}$. Then $v:=u\!\upharpoonright\!V_0\in H^{\varphi\varrho^{2q}}_{A,\eta}(V_0)$. Indeed, let a function $\zeta_{1}\in C^{\infty}(\overline{\Omega})$ satisfy the conditions $\mathrm{supp}\,\zeta_{1}\subset\Omega_{0}\cup\Gamma_{0}$ and $\zeta_{1}=1$ on $\overline{V}$; then $\zeta_{1}Au=\zeta_{1}f\in H^{\eta}(\Omega)$ and $\zeta_{1}u\in H^{\varphi\varrho^{2q}}(\Omega)$ due to the hypothesis \eqref{f4.14} and the conclusion of Theorem~$\ref{th4.7}$, respectively; hence, $v\in H^{\varphi\varrho^{2q}}(V_0)$ and $Av\in H^{\eta}(V_0)$, i.e. $v\in H^{\varphi\varrho^{2q}}_{A,\eta}(V_0)$.

Since the set $C^{\infty}(\overline{V})$ is dense in $H^{\varphi\varrho^{2q}}_{A,\eta}(V_0)$ by Theorem~\ref{th1}, there exists a sequence $(u_{k})_{k=1}^{\infty}\subset C^{\infty}(\overline{\Omega})$ such that $v_{k}:=u_{k}\!\upharpoonright\!\overline{V}\to v$ in $H^{\varphi\varrho^{2q}}(V_0)$ and $Av_{k}\to Av$ in $H^{\eta}(V_0)$ as $k\to\infty$. Then
\begin{equation}\label{f6.38}
\zeta u_{k}\to\zeta u\quad\mbox{in}\;\;
H^{\varphi\varrho^{2q}}_{A,\eta}(\Omega)
\end{equation}
and
\begin{equation}\label{f6.39}
\zeta(Au_{k})\to\zeta(Au)\quad\mbox{in}\;\;H^{\eta}(\Omega)
\end{equation}
as $k\to\infty$. Indeed, choose a number $r\gg1$ such that the numbers
$2q+\sigma_{0}(\varphi)$, $2q+\sigma_{1}(\varphi)$, $\sigma_{0}(\eta)$, and $\sigma_{1}(\eta)$ belong to $(-r,r)$, and consider a bounded linear operator $T_{0}:H^{-r}(V_{0})\to H^{-r}(\mathbb{R}^{n})$ such that $T_{0}w=w$ in $V_{0}$ whenever $w\in H^{-r}(V_{0})$ and that its restriction to $H^{r}(V_{0})$ is a bounded operator $T_{0}:H^{r}(V_{0})\to H^{r}(\mathbb{R}^{n})$. Such an extension operator exists; see, e.g., \cite[Theorem 4.2.2]{Triebel95}. It follows from Theorem~\ref{prop5.1} that the restriction of $T_{0}$ to $H^{\varphi\varrho^{2q}}(V_0)$ or $H^{\eta}(V_0)$ is a bounded operator $T_{0}:H^{\varphi\varrho^{2q}}(V_0)\to H^{\varphi\varrho^{2q}}(\mathbb{R}^{n})$ or $T_{0}:H^{\eta}(V_0)\to H^{\eta}(\mathbb{R}^{n})$, resp. Then the linear mapping $T:w\mapsto(T_{0}w)\!\upharpoonright\!\Omega$ acts continuously between the following spaces: $T:H^{\varphi\varrho^{2q}}(V_0)\to H^{\varphi\varrho^{2q}}(\Omega)$ and $T:H^{\eta}(V_0)\to H^{\eta}(\Omega)$. Hence, $\zeta u_{k}=\zeta(Tv_{k})\to\zeta(Tv)=\zeta u$
in $H^{\varphi\varrho^{2q}}(\Omega)$ and $\zeta Au_{k}=\zeta(TAv_{k})\to\zeta(TAv)=\zeta Au$ in $H^{\eta}(\Omega)$ as $k\to\infty$; i.e., \eqref{f6.38} and \eqref{f6.39} hold true.

According to \eqref{f6.38}, we obtain
\begin{equation}\label{f6.40}
\chi u_{k}=\chi\zeta u_{k}\to\chi\zeta u=\chi u\quad\mbox{in}\;\;
H^{\varphi\varrho^{2q}}(\Omega)
\end{equation}
and
\begin{equation}\label{f6.41}
\zeta u_{k}\to\zeta u\quad\mbox{in}\;\;
H^{\varphi\varrho^{2q-\ell}}(\Omega).
\end{equation}
as $k\to\infty$. Let us show that
\begin{equation}\label{f6.42}
\zeta B_{j}u_{k}\to\zeta B_{j}u\quad\mbox{in}\;\;
H^{\varphi\varrho^{2q-m_j-1/2}}(\Gamma)
\end{equation}
as $k\to\infty$ for each $j\in\{1,\ldots,q\}$.

Given $j$, we consider a boundary PDO on $\partial V_0$ of the form
\begin{equation*}
B_{j}^{\star}:=B_{j}^{\star}(x,D):=
\sum_{|\mu|\leq m_j}b_{j,\mu}^{\star}(x)D^{\mu}
\end{equation*}
where every coefficient $b_{j,\mu}^{\star}$ belongs to $C^{\infty}(\partial V_0)$ and coincides with the corresponding coefficient $b_{j,\mu}$ of $B_{j}$ on $\Gamma\cap\partial V_0$. Since $v_{k}\to v$ in $H^{\varphi\varrho^{2q}}_{A,\eta}(V_0)$, we conclude by Theorem~\ref{th1} in view of Remark~\ref{bounded-operator} that
\begin{equation*}
B_{j}^{\star}v_{k}\to B_{j}^{\star}v\quad\mbox{in}\quad H^{\varphi\varrho^{{2q}-m_{j}-1/2}}(\partial V_0)
\end{equation*}
as $k\to\infty$. But $\zeta B_{j}^{\star}v_{k}=\zeta B_{j}u_{k}$ on $\Gamma\cap\partial V_0$ whenever $k\geq1$. Hence,
\begin{equation}\label{f6.43}
\zeta B_{j}u_{k}\to T_{1}(\zeta B_{j}^{\star}v)\quad\mbox{in}\;\;
H^{\varphi\varrho^{2q-m_j-1/2}}(\Gamma),
\end{equation}
where the distribution $T_{1}(\zeta B_{j}^{\star}v)$ is equal by definition to $\zeta B_{j}^{\star}v$ on $\Gamma\cap V$ and to zero on $\Gamma\setminus\mathrm{supp}\,\zeta$.

Remark that
\begin{equation}\label{f6.44}
\zeta B_{j}^{\star}v=\zeta B_{j}u\quad\mbox{on}\;\;\Gamma\cap V.
\end{equation}
Indeed, since $u\in\mathcal{S}'(\Omega)$ and $Au\in H^{-1/2+}(\Omega)$
by the hypotheses of Theorem~$\ref{th4.7}$, there exist numbers $\theta<-1/2$ and $\delta>-1/2$ such that $u\in H^{\theta}_{A,\delta}(\Omega)$. According to Theorem~\ref{th1}, there exists a sequence $(w_{k})_{k=1}^{\infty}\subset C^{\infty}(\overline{\Omega})$ that converges to $u$ in $H^{\theta}_{A,\delta}(\Omega)$. Then $w_{k}^{\circ}:=w_{k}\!\upharpoonright\!V_0\to u\!\upharpoonright\!V_0=v$ in $H^{\theta}_{A,\delta}(V_0)$. Hence, $\zeta B_{j}w_{k}\to\zeta B_{j}u$ in $H^{\theta-m_j-1/2}(\Gamma)$ and also $\zeta B_{j}^{\star}w_{k}^{\circ}\to\zeta B_{j}^{\star}v$ in $H^{\theta-m_j-1/2}(\partial V_0)$ as $k\to\infty$. However, $\zeta B_{j}w_{k}=\zeta B_{j}^{\star}w_{k}^{\circ}$ on $\Gamma\cap\partial V_0\supset\Gamma\cap V$. Hence, the last two limits imply property \eqref{f6.44}. Owing to this property, we have the equality of distributions $T_{1}(\zeta B_{j}^{\star}v)=\zeta B_{j}u$ on $\Gamma$, which together with \eqref{f6.43} gives \eqref{f6.42}.

Now we may complete the proof. According to Step~3, the inequality
\begin{equation*}
\|\chi u_k\|_{\varphi\varrho^{2q},\Omega}
\leq c\,\bigl(\|\zeta(A,B)u_k\|_{\eta,\varphi,\Omega,\Gamma}+
\|\zeta u_k\|_{\varphi\rho^{2q-\ell},\Omega}\bigr)
\end{equation*}
holds true for every $k\geq1$. Passing here to the limit as $k\to\infty$ and using \eqref{f6.39}--\eqref{f6.42}, we conclude that the estimate \eqref{f4.21} holds true under the hypotheses of Theorem~$\ref{th4.7}$.
\end{proof}

\begin{remark}\label{rem6.3}
We stated in Remark \ref{rem4.14} that Theorem \ref{th4.13} remains valid for every  $\varphi\in\mathrm{OR}$ subject to $\sigma_0(\varphi)>-1/2$ if we put $\eta:=\varphi$. The proof of this result is performed in the same way as the proof just given and is somewhat simpler. Namely, we may assume  that $-1/2<s_0<\sigma_0(\varphi)$; then $X_{2q}=H^{\varphi\varrho^{2q}}(\Omega)$ and $Y_0=H^{\varphi}(\Omega)$ up to equivalence of norms due to \eqref{f6.12}. This immediately implies
\eqref{f6.31} and \eqref{f6.33} on Step~2.
\end{remark}

\section{Applications to homogeneous elliptic equations}\label{sec7}

\subsection{Solvability and regularity theorems}\label{sec7.1}
Let us discuss applications of the theorems from Section~\ref{sec4} to the regular elliptic boundary value problem \eqref{f1}, \eqref{f2} in the important case where the elliptic equation \eqref{f1} is homogeneous, i.e. $f=0$ in $\Omega$. In this case, we may formulate versions of these theorems for every $\varphi\in\mathrm{OR}$. We will consider these versions for more general Theorems \ref{th1}, \ref{th2}, \ref{th4.7}, and \ref{th4.13} and then discuss the corresponding proofs. It is convenient to use the function parameter $\alpha:=\varphi\rho^{2q}$ in the case indicated.

Given $\alpha\in\mathrm{OR}$, we put
\begin{equation*}
H^{\alpha}_{A}(\Omega):=
\bigl\{u\in H^{\alpha}(\Omega):Au=0\;\,\mbox{in}\;\,\Omega\bigr\};
\end{equation*}
as usual, $Au$ is understood in the theory of distributions. We endow the linear space $H^{\alpha}_{A}(\Omega)$ with the inner product and norm in $H^{\alpha}(\Omega)$. The space $H^{\alpha}_{A}(\Omega)$ is complete with this norm because the differential operator $A$ is continuous on $\mathcal{D}'(\Omega)$. If $\alpha(t)\equiv t^{s}$ for a certain $s\in\mathbb{R}$, the space $H^{\alpha}_{A}(\Omega)$ is also denoted by $H^{s}_{A}(\Omega)$ according to our convention.

Since $A$ is elliptic on $\overline{\Omega}$, the inclusion $H^{\alpha}_{A}(\Omega)\subset C^{\infty}(\Omega)$ holds true (see, e.g., \cite[Chapter~2, Theorem~3.2]{LionsMagenes72}. However,  $H^{\alpha}_{A}(\Omega)\not\subset C^{\infty}(\overline{\Omega})$. Put
\begin{equation*}
C^{\infty}_{A}(\overline{\Omega}):=
\bigl\{u\in C^{\infty}(\overline{\Omega}):Au=0\;\,\mbox{on}\;\,
\overline{\Omega}\bigr\}.
\end{equation*}
With the problem \eqref{f1}, \eqref{f2} in the $f=0$ case, we associate the mapping
\begin{equation}\label{f7.1}
B_{A}:u\mapsto Bu=(B_{1}u,\ldots,B_{q}u),\quad\mbox{where}\quad
u\in C^{\infty}_{A}(\overline{\Omega}).
\end{equation}
Put
\begin{equation*}
N^{+}_1:=\bigl\{(C^{+}_{1}v,\ldots,C^{+}_{q}v):v\in N^{+}\bigr\}.
\end{equation*}
Of course, $\dim N^{+}_{1}\leq\dim N^{+}<\infty$. The inequality $\dim N^{+}_{1}<\dim N^{+}$ is possible, which follows from a result by Pli\'{s} \cite{Plis61} (this result is expounded in the book \cite[Theorem~13.6.15]{Hermander83}).

\begin{theorem}\label{th7.1}
Let $\alpha\in\mathrm{OR}$. Then the set $C^{\infty}_{A}(\overline{\Omega})$ is dense in the space  $H^{\alpha}_{A}(\Omega)$, and the mapping \eqref{f7.1} extends uniquely (by continuity) to a bounded linear operator
\begin{equation}\label{f7.2}
B_{A}:H^{\alpha}_{A}(\Omega)\to
\bigoplus_{j=1}^{q}H^{\alpha\rho^{-m_j-1/2}}(\Gamma)=:
\mathcal{H}_{\alpha}(\Gamma).
\end{equation}
This operator is Fredholm. Its kernel coincides with $N$, and its range  consists of all vectors $g\in\mathcal{H}_{\alpha}(\Gamma)$ such that
\begin{equation}\label{f7.3}
\sum_{j=1}^{q}\,(g_{j},\,C^{+}_{j}v)_{\Gamma}=0
\quad\mbox{for every}\quad v\in N^{+}.
\end{equation}
The index of the operator \eqref{f7.2} equals $\dim N-\dim N^{+}_1$ and does not depend on $\alpha$.
\end{theorem}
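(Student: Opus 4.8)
The plan is to realise $B_{A}$ as the restriction of the operator \eqref{f15} of Theorem~\ref{th1} to a closed subspace and to inherit its Fredholm properties. Put $\alpha:=\varphi\varrho^{2q}$ and fix $\eta$ as in Section~\ref{sec4} when $\sigma_{0}(\varphi)\leq-1/2$; when $\sigma_{0}(\varphi)>-1/2$ take $\eta:=\varphi$, so that $H^{\alpha}_{A,\varphi}(\Omega)=H^{\alpha}(\Omega)$ and Theorems~\ref{th1} and~\ref{th2} are available through Proposition~\ref{prop1} and Remark~\ref{rem4.11}. For $u\in H^{\alpha}_{A}(\Omega)$ we have $Au=0\in H^{\eta}(\Omega)$, hence $H^{\alpha}_{A}(\Omega)=\{u\in H^{\varphi\varrho^{2q}}_{A,\eta}(\Omega):Au=0\}$; as $u\mapsto Au$ is continuous from $H^{\varphi\varrho^{2q}}_{A,\eta}(\Omega)$ into $H^{\eta}(\Omega)$, this is a closed subspace whose intrinsic norm coincides with the restriction of $\|\cdot\|_{\alpha,A,\eta}$ (because $Au=0$). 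On it the operator \eqref{f15} acts as $u\mapsto(0,Bu)$, with values in $\{0\}\oplus\mathcal{H}_{\alpha}(\Gamma)$ since $\alpha\varrho^{-m_{j}-1/2}=\varphi\varrho^{2q-m_{j}-1/2}$. This already gives the boundedness of \eqref{f7.2} and $\ker B_{A}=\ker(A,B)\cap H^{\alpha}_{A}(\Omega)=N$.

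For the range, Theorem~\ref{th1} shows that $g\in\mathcal{H}_{\alpha}(\Gamma)$ lies in the range of \eqref{f7.2} iff $(0,g)$ lies in the range of \eqref{f15}, iff $(0,g)$ satisfies \eqref{f10}; for $f=0$ this reduces exactly to \eqref{f7.3}. The functionals $\ell_{v}:g\mapsto\sum_{j=1}^{q}(g_{j},C^{+}_{j}v)_{\Gamma}$, $v\in N^{+}$, are continuous on $\mathcal{H}_{\alpha}(\Gamma)$ (each $C^{+}_{j}v\in C^{\infty}(\Gamma)$); since $(C^{\infty}(\Gamma))^{q}$ is dense in $\mathcal{H}_{\alpha}(\Gamma)$, one has $\ell_{v}=\ell_{v'}$ iff $(C^{+}_{1}v,\ldots,C^{+}_{q}v)=(C^{+}_{1}v',\ldots,C^{+}_{q}v')$, so the image of $v\mapsto\ell_{v}$ has dimension $\dim N^{+}_{1}$. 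Hence the range $\bigcap_{v\in N^{+}}\ker\ell_{v}$ is a closed subspace of $\mathcal{H}_{\alpha}(\Gamma)$ of codimension $\dim N^{+}_{1}$, and \eqref{f7.2} is Fredholm with index $\dim N-\dim N^{+}_{1}$; independence of $\alpha$ follows from the corresponding assertion in Theorem~\ref{th1}.

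The main work — and what I expect to be the principal obstacle — is the density of $C^{\infty}_{A}(\overline{\Omega})$ in $H^{\alpha}_{A}(\Omega)$, which does not follow directly from Theorem~\ref{th1} because $C^{\infty}(\overline{\Omega})$ is \emph{not} contained in $H^{\alpha}_{A}(\Omega)$. Given $u\in H^{\alpha}_{A}(\Omega)$, write $u=(1-P)u+Pu$ with $(1-P)u\in N\subset C^{\infty}_{A}(\overline{\Omega})$; since $A((1-P)u)=0$ we get $A(Pu)=0$, so it suffices to approximate $u_{1}:=Pu\in H^{\alpha}_{A}(\Omega)\cap P(H^{\varphi\varrho^{2q}}_{A,\eta}(\Omega))$. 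Here $(A,B)u_{1}=(0,g_{1})$ with $g_{1}:=B_{A}u_{1}$ satisfying \eqref{f7.3}, and $(0,g_{1})\in\mathcal{P}^{+}(\mathcal{H}^{\eta,\varphi}(\Omega,\Gamma))$ by Theorem~\ref{th2}. Choosing a system $h_{1},\ldots,h_{m}\in(C^{\infty}(\Gamma))^{q}$ biorthogonal to a basis $\ell_{v_{1}},\ldots,\ell_{v_{m}}$ of the span of the $\ell_{v}$ (obtained by approximating an arbitrary dual basis from $(C^{\infty}(\Gamma))^{q}$ and re-biorthogonalising), I would approximate $g_{1}$ by $\widehat{g}^{(k)}\in(C^{\infty}(\Gamma))^{q}$ and set $g^{(k)}:=\widehat{g}^{(k)}-\sum_{i=1}^{m}\ell_{v_{i}}(\widehat{g}^{(k)})\,h_{i}$, which still lies in $(C^{\infty}(\Gamma))^{q}$, satisfies \eqref{f7.3}, and converges to $g_{1}$ in $\mathcal{H}_{\alpha}(\Gamma)$. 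Then $(0,g^{(k)})\in\mathcal{P}^{+}(\mathcal{H}^{\eta,\varphi}(\Omega,\Gamma))$, so $v^{(k)}:=(A,B)^{-1}(0,g^{(k)})$ is defined via the isomorphism of Theorem~\ref{th2} and $v^{(k)}\to u_{1}$ in $H^{\varphi\varrho^{2q}}_{A,\eta}(\Omega)$, hence in $H^{\alpha}_{A}(\Omega)$ (the norms agree, as $Av^{(k)}=0=Au_{1}$). Finally $v^{(k)}$ is a generalized solution of \eqref{f1}, \eqref{f2} with $f=0$ and smooth $g=g^{(k)}$, so by Theorem~\ref{th4.6} together with Remark~\ref{rem4.11} (applied with arbitrarily large Sobolev order), or by classical elliptic regularity, $v^{(k)}\in\bigcap_{s\in\mathbb{R}}H^{s}(\Omega)=C^{\infty}(\overline{\Omega})$ and $Av^{(k)}=0$ on $\overline{\Omega}$, i.e.\ $v^{(k)}\in C^{\infty}_{A}(\overline{\Omega})$; adding $(1-P)u$ back yields the desired approximation of $u$.

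The two delicate spots are (a) arranging the smooth boundary data $g^{(k)}$ to satisfy the compatibility condition \eqref{f7.3} so that they can be lifted through the isomorphism of Theorem~\ref{th2}, which is handled by the finite-dimensional correction above, and (b) the $C^{\infty}(\overline{\Omega})$-regularity of the lifted functions $v^{(k)}$, which is precisely the elliptic regularity statement of Theorem~\ref{th4.6}. All remaining assertions are immediate restrictions of Theorems~\ref{th1} and~\ref{th2}; in the Sobolev case $\varphi(t)\equiv t^{s}$ one could alternatively argue by interpolation, but since Theorem~\ref{th1} already covers every $\varphi\in\mathrm{OR}$ the restriction argument is more economical.
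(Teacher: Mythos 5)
Your proposal is correct and follows essentially the same route as the paper: the operator-theoretic assertions are obtained by restricting the Fredholm operator of Proposition~\ref{prop1}/Theorem~\ref{th1} to the closed subspace $H^{\alpha}_{A}(\Omega)$, and the density of $C^{\infty}_{A}(\overline{\Omega})$ is derived from the isomorphism of Theorem~\ref{th2} by approximating the boundary data with smooth vectors satisfying \eqref{f7.3} and invoking elliptic regularity for the lifted solutions. Your explicit biorthogonal correction plays exactly the role of the paper's projector $\mathcal{P}^{+}_{1}$ (which maps smooth data to smooth data since $N^{+}_{1}\subset(C^{\infty}(\Gamma))^{q}$), so the two arguments coincide in substance.
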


If $N=\{0\}$ and $N^{+}_1=\{0\}$, the operator \eqref{f7.2} is an isomorphism between the spaces $H^{\alpha}_{A}(\Omega)$ and $\mathcal{H}_{\alpha}(\Gamma)$. Generally, this operator induces an isomorphism which may be built with the help of the following decompositions of these spaces:
\begin{gather}\label{f7.4}
H^{\alpha}_{A}(\Omega)=N\dotplus\{u\in
H^{\alpha}_{A}(\Omega):(u,w)_\Omega=0\;\,\mbox{for every}\;\,w\in N\},\\
\mathcal{H}_{\alpha}(\Gamma)=N^{+}_{1}\dotplus
\{(g_1,\ldots,g_q)\in\mathcal{H}_{\alpha}(\Gamma):\mbox{\eqref{f7.3} is true}\}.\label{f7.5}
\end{gather}
These formulas need commenting. If $\sigma_{0}(\alpha)>0$, the second summand in \eqref{f7.4} is well defined and is closed in $H^{\alpha}_{A}(\Omega)$ due to the continuous embedding $H^{\alpha}_{A}(\Omega)\hookrightarrow L_{2}(\Omega)$. Hence, in this case, \eqref{f7.4} is the restriction to $H^{\alpha}_{A}(\Omega)$ of the corresponding decomposition of $L_{2}(\Omega)$ into the orthogonal sum of subspaces. If $\sigma_{0}(\alpha)\leq0$, we use Lemma~\ref{lema1} and also formula \eqref{sum1} for $\alpha=\varphi\varrho^{2q}$ and $\omega=\eta$. In this case, the second summand in \eqref{f7.4} is well defined and closed in $H^{\alpha}_{A}(\Omega)$ according to this lemma, and \eqref{f7.4} is the restriction of the decomposition \eqref{sum1} to $H^{\alpha}_{A}(\Omega)$. Formula \eqref{f7.5} is true because the summands on the right have the trivial intersection, and the finite dimension of the first summand coincides with the codimension of the second. Indeed, since $\mathcal{H}_{\alpha}(\Gamma)$ is dual to $\mathcal{H}_{1/\alpha}(\Gamma)$ with respect to the form $(\cdot,\cdot)_{\Gamma}+\cdots+(\cdot,\cdot)_{\Gamma}$ (this is proved analogously to \cite[Theorem~2.3(v)]{MikhailetsMurach14}), the dimension of the dual of $N_{1}^{+}\subset\mathcal{H}_{1/\alpha}(\Gamma)$ equals the above-mentioned codimension.

Let $P_{1}$ and $\mathcal{P}^+_{1}$ respectively denote the projectors of the spaces $H^{\alpha}_{A}(\Omega)$ and $\mathcal{H}_{\alpha}(\Gamma)$ onto the second summand in \eqref{f7.4} and \eqref{f7.5} parallel to the first. The mappings  defining these projectors do not depend on $\alpha$. Note that $P_{1}$ is a restriction of the projector $P$ from Theorem~\ref{th2} where $\alpha=\varphi\varrho^{2q}$ and $\sigma_{0}(\alpha)\leq2q-1/2$.

\begin{theorem}\label{th7.2}
Let $\alpha\in\mathrm{OR}$. Then the restriction of the operator \eqref{f7.2} to the second summand in \eqref{f7.4} is an isomorphism
\begin{equation}\label{f7.6}
B_{A}:P_{1}(H^{\alpha}_{A}(\Omega))\leftrightarrow
\mathcal{P}^{+}_{1}(\mathcal{H}_{\alpha}(\Gamma)).
\end{equation}
\end{theorem}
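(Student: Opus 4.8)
The plan is to deduce Theorem~\ref{th7.2} from Theorem~\ref{th7.1} in precisely the same manner as Theorem~\ref{th2} was deduced from Theorem~\ref{th1}: one verifies that the restriction \eqref{f7.6} is a bounded bijection between Banach spaces and then invokes the Banach theorem on the inverse operator. Since all the analytic content has been front-loaded into Theorem~\ref{th7.1} and into the justification of the decompositions \eqref{f7.4} and \eqref{f7.5}, the argument is short.

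First I would record that both spaces in \eqref{f7.6} are Banach. The space $P_{1}(H^{\alpha}_{A}(\Omega))$ is the second summand in \eqref{f7.4}, which, as explained just before the statement, is a closed subspace of the Hilbert space $H^{\alpha}_{A}(\Omega)$: when $\sigma_{0}(\alpha)>0$ this uses the continuous embedding $H^{\alpha}_{A}(\Omega)\hookrightarrow L_{2}(\Omega)$, and when $\sigma_{0}(\alpha)\le0$ it uses Lemma~\ref{lema1} together with the decomposition \eqref{sum1} (for $\alpha=\varphi\varrho^{2q}$ and a suitable $\omega=\eta$), which makes the form $(u,w)_{\Omega}$ with $w\in N$ continuous on $H^{\alpha}_{A}(\Omega)$, so that its annihilator is closed. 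The space $\mathcal{P}^{+}_{1}(\mathcal{H}_{\alpha}(\Gamma))$ is the second summand in \eqref{f7.5}, which by Theorem~\ref{th7.1} coincides with the range $B_{A}(H^{\alpha}_{A}(\Omega))$; as $B_{A}$ is Fredholm, this range is closed in $\mathcal{H}_{\alpha}(\Gamma)$, hence Banach.

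Next I would establish bijectivity. Injectivity is immediate: by Theorem~\ref{th7.1} the kernel of $B_{A}$ equals $N$, which is the first summand in \eqref{f7.4}, so $N\cap P_{1}(H^{\alpha}_{A}(\Omega))=\{0\}$ forces the restriction of $B_{A}$ to the second summand to be one-to-one. Surjectivity is equally immediate: since $H^{\alpha}_{A}(\Omega)=N\dotplus P_{1}(H^{\alpha}_{A}(\Omega))$ and $B_{A}$ vanishes on $N$, we get $B_{A}(P_{1}(H^{\alpha}_{A}(\Omega)))=B_{A}(H^{\alpha}_{A}(\Omega))$, and by Theorem~\ref{th7.1} the latter is exactly the set of $g\in\mathcal{H}_{\alpha}(\Gamma)$ satisfying \eqref{f7.3}, i.e.\ $\mathcal{P}^{+}_{1}(\mathcal{H}_{\alpha}(\Gamma))$. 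Thus \eqref{f7.6} is a bounded linear bijection between Banach spaces, and the Banach theorem on the inverse operator gives that it is a topological isomorphism.

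I do not expect any genuine obstacle. The only point needing a line of care is the closedness of $P_{1}(H^{\alpha}_{A}(\Omega))$ in the low-regularity regime $\sigma_{0}(\alpha)\le0$, where intersecting with $L_{2}(\Omega)$ is not available; this is precisely what Lemma~\ref{lema1} and formula \eqref{sum1} are for. Everything else is a direct bookkeeping consequence of the Fredholm description in Theorem~\ref{th7.1}.
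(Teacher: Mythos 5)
Your proposal is correct and follows essentially the same route as the paper, which likewise observes that the restriction \eqref{f7.6} is a bounded bijection between the closed subspaces furnished by \eqref{f7.4} and \eqref{f7.5} (via the Fredholm description in Theorem~\ref{th7.1}) and then invokes the Banach theorem on the inverse operator. Your added care about closedness in the regime $\sigma_{0}(\alpha)\le0$ via Lemma~\ref{lema1} and \eqref{sum1} matches the justification the paper gives just before the statement of the theorem.
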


Let us turn to properties of generalized solutions to the elliptic problem \eqref{f1}, \eqref{f2} in the case where $f=0$ in $\Omega$. The notion of a generalized solution introduced just before Theorem~\ref{th4.6} is applicable in this case. Since every solution $u$ to the homogeneous elliptic equation \eqref{f1} belongs to $C^{\infty}(\Omega)$, we are interested in properties of $u=u(x)$ when the argument $x$ approaches the boundary $\Gamma$ of $\Omega$. Let $\Gamma_{0}$ be a nonempty open subset of $\Gamma$.

\begin{theorem}\label{th7.3}
Let $\alpha\in\mathrm{OR}$. Assume that a distribution $u\in\mathcal{S}'(\Omega)$ is a generalized solution to the elliptic problem \eqref{f1}, \eqref{f2} whose right-hand sides satisfy the conditions $f=0$ in $\Omega$ and
\begin{equation}\label{f7.7}
g_j\in H^{\alpha\rho^{-m_j-1/2}}_{\mathrm{loc}}(\Gamma_0)
\quad\mbox{for each}\quad j\in\{1,\ldots,q\}.
\end{equation}
Then $u\in H^{\alpha}_{\mathrm{loc}}(\Omega,\Gamma_{0})$.
\end{theorem}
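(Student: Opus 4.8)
The plan is to reduce Theorem~\ref{th7.3} to the already-established local regularity result for the full inhomogeneous problem, namely Theorem~\ref{th4.7}. The key point is that here $f=0$ in all of $\Omega$, so the hypothesis \eqref{f4.14} of Theorem~\ref{th4.7} is trivially satisfied for \emph{any} choice of the auxiliary function parameter $\eta$: indeed $f=0\in H^{\eta}_{\mathrm{loc}}(\Omega_{0},\Gamma_{0})\cap H^{-1/2+}(\Omega)$ regardless of $\eta$, and regardless of the open set $U$. Likewise, since $u\in\mathcal{S}'(\Omega)$ solves the homogeneous equation $Au=0$, it automatically belongs to $H^{s+2q}_{A,\lambda}(\Omega)$ for suitable $s<-1/2$, $\lambda>-1/2$, so the notion of generalized solution and the trace $g=Bu$ make sense exactly as in the paragraph preceding Theorem~\ref{th4.6}.

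First I would fix the open set: take $U:=\mathbb{R}^{n}$ so that $\Gamma_{0}$ is the given open subset of $\Gamma$ and $\Omega_{0}=\Omega$; then $H^{\alpha}_{\mathrm{loc}}(\Omega,\Gamma_{0})$ is precisely the target space in the statement. Next, with $\varphi:=\alpha\varrho^{-2q}\in\mathrm{OR}$, I would choose $\eta$ according to the recipe in \eqref{f13}: pick $s_{0}<\sigma_{0}(\varphi)$, $s_{1}>\sigma_{1}(\varphi)$, $\lambda>-1/2$ subject to \eqref{f4.4}--\eqref{f4.5}, and let $\eta\in\mathrm{OR}$ be the resulting function (or $\eta(t):=t^{\lambda}$ in the case $\sigma_{1}(\varphi)<-1/2$). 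With this $\eta$, the hypotheses of Theorem~\ref{th4.7} read: $f=0\in H^{\eta}_{\mathrm{loc}}(\Omega,\Gamma_{0})\cap H^{-1/2+}(\Omega)$, which holds, and $g_{j}\in H^{\varphi\varrho^{2q-m_{j}-1/2}}_{\mathrm{loc}}(\Gamma_{0})=H^{\alpha\varrho^{-m_{j}-1/2}}_{\mathrm{loc}}(\Gamma_{0})$ for each $j$, which is exactly assumption \eqref{f7.7}. Theorem~\ref{th4.7} then yields $u\in H^{\varphi\varrho^{2q}}_{\mathrm{loc}}(\Omega,\Gamma_{0})=H^{\alpha}_{\mathrm{loc}}(\Omega,\Gamma_{0})$, which is the assertion.

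The only point requiring a word of care is checking that one can indeed choose $s_{0},s_{1},\lambda$ satisfying all of \eqref{f4.4} and \eqref{f4.5} simultaneously for the given $\varphi$; but this is elementary. If $\sigma_{1}(\varphi)\geq-1/2$ one takes any $\lambda\in(-1/2,s_{1}]$ with $s_{1}>\sigma_{1}(\varphi)$ and any $s_{0}<\sigma_{0}(\varphi)$; if $\sigma_{1}(\varphi)<-1/2$ one takes $s_{1}<-1/2$ with $s_{1}>\sigma_{1}(\varphi)$, then $s_{0}<\sigma_{0}(\varphi)$ and $\lambda>-1/2$ arbitrary. So there is no real obstacle: the whole content of Theorem~\ref{th7.3} is already packaged inside Theorem~\ref{th4.7}, and the proof is a matter of instantiating the latter with $f=0$ and the correct $\eta$. (One may remark, in passing, that the improved index $\dim N-\dim N^{+}_{1}$ appearing in Theorem~\ref{th7.1} plays no role here; local regularity does not see the cokernel.)
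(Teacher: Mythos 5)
Your proposal is correct and is essentially the paper's own argument: Theorem~\ref{th7.3} is obtained by instantiating the local regularity Theorem~\ref{th4.7} with $\varphi:=\alpha\varrho^{-2q}$ and a suitable $\eta$, the hypothesis \eqref{f4.14} on $f$ being vacuous because $f=0$. One small point to add: Theorem~\ref{th4.7} is stated under the standing assumption $\sigma_0(\varphi)\leq-1/2$ made just before \eqref{f4.4}, so in the range $\sigma_0(\alpha)>2q-1/2$ you must instead invoke the variant recorded in Remark~\ref{rem4.11} (with $\eta:=\varphi$), which is exactly what the paper cites alongside Theorem~\ref{th4.7}; since $f=0$ trivially satisfies the $f$-hypothesis in that version as well, nothing else in your argument changes.
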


We supplement this theorem with a corresponding estimate of $u$. Let $\|\cdot\|_{\alpha,\Gamma}'$ denote the norm in the Hilbert space $\mathcal{H}_{\alpha}(\Gamma)$ defined in \eqref{f7.2}.

\begin{theorem}\label{th7.4}
Let $\alpha\in\mathrm{OR}$, and assume that a distribution $u\in\mathcal{S}'(\Omega)$ satisfies the hypotheses of Theorem~$\ref{th7.3}$. We arbitrarily choose a number $\ell>0$ and functions $\chi,\zeta\in C^{\infty}(\overline{\Omega})$ such that $\mathrm{supp}\,\chi\subset\mathrm{supp}\,\zeta\subset\Omega\cup\Gamma_{0}$ and that $\zeta=1$ in a neighbourhood of $\mathrm{supp}\,\chi$. Then
\begin{equation}\label{f7.8}
\|\chi u\|_{\alpha,\Omega}\leq c\,\bigl(\|\zeta g\|_{\alpha,\Gamma}'+
\|\zeta u\|_{\alpha\rho^{-\ell},\Omega}\bigr)
\end{equation}
for some number $c>0$ that does not depend on $u$ and $g$.
\end{theorem}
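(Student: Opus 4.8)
The plan is to obtain Theorem~\ref{th7.4} as a direct specialization of the local \textit{a priori} estimate of Theorem~\ref{th4.13} (supplemented by Remark~\ref{rem4.14}), exploiting the fact that the hypothesis $f=0$ frees the choice of the auxiliary order~$\eta$. Set $\varphi:=\alpha\varrho^{-2q}$; then $\varphi\in\mathrm{OR}$, $\sigma_j(\varphi)=\sigma_j(\alpha)-2q$, and, crucially, $\varphi\varrho^{2q}=\alpha$, $\varphi\varrho^{2q-\ell}=\alpha\varrho^{-\ell}$, and $\varphi\varrho^{2q-m_j-1/2}=\alpha\varrho^{-m_j-1/2}$ for every~$j$. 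Since $\Gamma_0$ is open in the compact set $\Gamma$, we may write $\Gamma_0=W_1\cap\Gamma$ for some open $W_1\subset\mathbb{R}^{n}$ and put $W:=W_1\setminus(\Gamma\setminus\Gamma_0)$, which is open because $\Gamma\setminus\Gamma_0$ is closed in $\mathbb{R}^{n}$; then $W\cap\Gamma=\Gamma_0$. With $U:=\Omega\cup W$ (open in $\mathbb{R}^{n}$) we obtain $\Omega\cap U=\Omega\neq\emptyset$ and $\Gamma\cap U=\Gamma_0\neq\emptyset$, so that in the notation of Theorems~\ref{th4.7} and~\ref{th4.13} the set $\Omega_0$ is all of $\Omega$ and $\Gamma_0$ is the given set; in particular the support condition $\mathrm{supp}\,\chi\subset\mathrm{supp}\,\zeta\subset\Omega\cup\Gamma_0$ of Theorem~\ref{th7.4} is literally the condition $\mathrm{supp}\,\chi\subset\mathrm{supp}\,\zeta\subset\Omega_0\cup\Gamma_0$ required there.

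Next I would check that $u$ satisfies the hypotheses of Theorem~\ref{th4.7} for $\varphi$ as above and a suitable $\eta$. If $\sigma_0(\varphi)>-1/2$, take $\eta:=\varphi$ and use the version of Theorem~\ref{th4.13} granted by Remark~\ref{rem4.14}; if $\sigma_0(\varphi)\leq-1/2$, pick numbers $s_0,s_1,\lambda$ subject to \eqref{f4.4}, \eqref{f4.5}, let $\eta\in\mathrm{OR}$ be defined by \eqref{f13}, and apply Theorem~\ref{th4.13} itself. In either case $f=0$ lies in $H^{\eta}_{\mathrm{loc}}(\Omega,\Gamma_0)\cap H^{-1/2+}(\Omega)$ trivially, condition \eqref{f4.15} of Theorem~\ref{th4.7} reduces to $g_j\in H^{\varphi\varrho^{2q-m_j-1/2}}_{\mathrm{loc}}(\Gamma_0)$, which is exactly \eqref{f7.7}, and $u$ is a generalized solution of \eqref{f1}, \eqref{f2} with $f=0$ by assumption. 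Hence the hypotheses of Theorem~\ref{th4.7}, and therefore those under which Theorem~\ref{th4.13} is stated, are fulfilled.

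Then I would simply apply Theorem~\ref{th4.13} (or its Remark~\ref{rem4.14} version) with this $U$, $\varphi$, $\eta$ and the given $\chi,\zeta,\ell$, obtaining
\begin{equation*}
\|\chi u\|_{\varphi\varrho^{2q},\Omega}\leq c\,\bigl(\|\zeta(f,g)\|_{\eta,\varphi,\Omega,\Gamma}+\|\zeta u\|_{\varphi\varrho^{2q-\ell},\Omega}\bigr).
\end{equation*}
Because $f=0$, the vector $\zeta(f,g)$ equals $\bigl(0,(\zeta\!\upharpoonright\!\Gamma)g_1,\ldots,(\zeta\!\upharpoonright\!\Gamma)g_q\bigr)$, so its $H^{\eta}(\Omega)$-component vanishes and, using $\varphi\varrho^{2q-m_j-1/2}=\alpha\varrho^{-m_j-1/2}$, the norm $\|\zeta(f,g)\|_{\eta,\varphi,\Omega,\Gamma}$ coincides with $\|\zeta g\|_{\alpha,\Gamma}'$. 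Together with $\varphi\varrho^{2q}=\alpha$ and $\varphi\varrho^{2q-\ell}=\alpha\varrho^{-\ell}$ this turns the displayed inequality into \eqref{f7.8}.

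I do not expect a genuine obstacle here; the argument is essentially bookkeeping. The two points requiring a little care are the choice of the open set $U$ so that $\Omega_0=\Omega$ (which is precisely what reconciles the support hypotheses of Theorems~\ref{th7.4} and~\ref{th4.13}) and the observation that, since $f=0$, one may pick $\eta$ with $\sigma_0(\eta)>-1/2$ regardless of $\alpha\in\mathrm{OR}$, so that all results of Section~\ref{sec4} become applicable for every~$\alpha$. The same scheme yields Theorems~\ref{th7.1}, \ref{th7.2} and~\ref{th7.3} from Theorems~\ref{th1}, \ref{th2} and~\ref{th4.7}, respectively.
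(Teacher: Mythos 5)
Your proposal is correct and follows exactly the route of the paper, which disposes of Theorem~\ref{th7.4} in one sentence as ``a direct consequence of Theorem~\ref{th4.13} and Remark~\ref{rem4.14}''; you have merely spelled out the bookkeeping (the substitution $\varphi:=\alpha\varrho^{-2q}$, the choice of $U$ making $\Omega_0=\Omega$, the case split on $\sigma_0(\varphi)$ versus $-1/2$, and the vanishing of the $H^{\eta}(\Omega)$-component of the norm when $f=0$). All of these details check out.
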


Let us discuss the proofs of these theorems. Theorem~\ref{th7.1} follows from Proposition~\ref{prop1} and Theorem~\ref{th1} excepting the conclusion about the density of $C^{\infty}_{A}(\overline{\Omega})$ in   $H^{\alpha}_{A}(\Omega)$. Indeed, the restriction of the Fredholm operator \eqref{prop1} if $\sigma_0(\varphi)>-1/2$ or the Fredholm operator \eqref{f15} if $\sigma_0(\varphi)\leq-1/2$ to the space $H^{\alpha}_{A}(\Omega)$, where $\alpha=\varphi\varrho^{2q}$, is evidently a Fredholm bounded operator between the spaces \eqref{f7.2} with indicated properties of its kernel, range, and index. Theorem~\ref{th7.2} is a direct consequence of this part of Theorem~\ref{th7.1} and the Banach theorem on inverse operator. The mentioned density is easily deduced from Theorem~\ref{th7.2}. Indeed, since the set $(C^{\infty}(\Gamma))^{q}$ is dense in $\mathcal{H}_{\alpha}(\Gamma)$ for every $\alpha\in\mathrm{OR}$, its subset $\mathcal{P}^{+}_{1}((C^{\infty}(\Gamma))^{q})$ is dense in the range of the isomorphism \eqref{f7.6}. Hence, the set $B_{A}^{-1}\mathcal{P}^{+}_{1}((C^{\infty}(\Gamma))^{q})$ lies in $C^{\infty}_{A}(\overline{\Omega})$ and is dense in the subspace $P_{1}(H^{\alpha}_{A}(\Omega))$ of $H^{\alpha}_{A}(\Omega)$; here, $B_{A}^{-1}$ denotes the inverse of \eqref{f7.6}. This yields the required density of $C^{\infty}_{A}(\overline{\Omega})$ in $H^{\alpha}_{A}(\Omega)=N\dotplus P_{1}(H^{\alpha}_{A}(\Omega))$.
Theorem~\ref{th7.3} follows immediately from
Theorem~\ref{th4.7} and Remark~\ref{rem4.11}. Theorem~\ref{th7.4} is a direct consequence of Theorem~\ref{th4.13} and Remark~\ref{rem4.14}.

Note that Theorems \ref{th7.1} and \ref{th7.2} are established in our paper \cite[Section~4]{AnopMurach16Coll2}, whereas Theorem~\ref{th7.3} is announced in \cite[Section~3]{AnopMurach18Dop3} (without proof), these papers being published in Ukrainian. If the function $\alpha$ is regularly varying at infinity, Theorem \ref{th7.1} is proved in \cite[Section~1]{MikhailetsMurach06UMJ11} (see also the monograph \cite[Section~3.3.1]{MikhailetsMurach14}). This theorem is a classical result in the Sobolev case where $\alpha(t)\equiv t^{s}$; see, e.g., the book \cite[Chapter~2, Section~7.3]{LionsMagenes72}, which contains this theorem if $s\in\mathbb{R}\setminus\{-1/2,-3/2,\ldots\}$. In this connection, we mention Seeley's paper \cite{Seeley66}, which investigates the Cauchy data of functions from $H^{s}_{A}(\Omega)$ where $s\in\mathbb{R}$ (see also the survey \cite[Section 5.4~b]{Agranovich97}).

\subsection{Uniform convergence of solutions}\label{sec7.2}
Using generalized Sobolev spaces over $\Gamma$, we obtain a sufficient condition for the uniform convergence of solutions to the elliptic equation $Au=0$ and their derivatives of a prescribed order.

\begin{theorem}\label{th7.5}
Let $0\leq p\in\mathbb{Z}$. Assume that a sequence $(u_{k})_{k=1}^{\infty}\subset\mathcal{S}'(\Omega)$ satisfies the following two conditions: $Au_{k}=0$ in $\Omega$ whenever $k\geq1$, and the sequence of the distributions $g^{(k)}:=B_{A}u_{k}$ converges in the space $\mathcal{H}_{\alpha}(\Gamma)$ for some $\alpha\in\mathrm{OR}$ subject to
\begin{equation}\label{f7.9}
\int\limits_1^{\infty} t^{2p+n-1}\alpha^{-2}(t)\,dt<\infty.
\end{equation}
Then every $u_{k}\in C^{p}(\overline{\Omega})$, and there exists a function $u\in C^{p}(\overline{\Omega})$ that
the sequence $(D^{\mu}P_{1}u_{k})_{k=1}^{\infty}$ converges uniformly to $D^{\mu}u$ on $\overline{\Omega}$ whenever $|\mu|\leq p$. The function $u$ satisfies the conditions $Au=0$ in $\Omega$ and $B_{A}u=g$ on $\Gamma$, where $g$ is the limit of the sequence $(g^{(k)})_{k=1}^{\infty}$.
\end{theorem}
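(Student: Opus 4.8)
The plan is to reduce everything to the embedding~\eqref{Hermander-embedding} over $\Gamma$ together with the isomorphism property from Theorem~\ref{th7.2}, interpolated between the trace spaces and the boundary of $\Omega$. First I would observe that~\eqref{f7.9} forces, via~\eqref{Hermander-embedding} applied on $\mathbb{R}^{n-1}$ and a local-coordinate argument on $\Gamma$, the continuous embedding $H^{\alpha\rho^{-m_j-1/2}}(\Gamma)\hookrightarrow C^{p+m_j}(\Gamma)$ is \emph{not} quite what is needed; rather, what matters is the embedding of $H^{\alpha}(\Omega)$ into $C^{p}(\overline{\Omega})$. So the genuine first step is: show that~\eqref{f7.9} implies $\int_1^\infty t^{2p+n-1}\alpha^{-2}(t)\,dt<\infty$ over $\mathbb{R}^n$ (it is the same integral), hence by~\eqref{Hermander-embedding} with $U=\mathbb{R}^n$ every $w\in H^{\alpha}(\mathbb{R}^n)$ lies in $C^p(\mathbb{R}^n)$, and therefore $H^{\alpha}(\Omega)\hookrightarrow C^{p}(\overline{\Omega})$ continuously (restrict an extension). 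In particular each $u_k\in H^{\alpha}_A(\Omega)\subset H^{\alpha}(\Omega)$ lies in $C^{p}(\overline{\Omega})$, which gives the first assertion. Before invoking this I must of course check that $u_k\in H^{\alpha}_A(\Omega)$: since $Au_k=0$ and $g^{(k)}=B_Au_k$ is well defined in $\mathcal{H}_\alpha(\Gamma)$ by convergence there, Theorem~\ref{th4.6} (with $f=0$, $\varphi\rho^{2q}=\alpha$, $\eta=\varphi$ — permissible by Remark~\ref{rem4.11}) yields $u_k\in H^{\alpha}(\Omega)$, hence $u_k\in H^{\alpha}_A(\Omega)$.

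Next I would handle the convergence. Decompose $u_k=(1-P_1)u_k+P_1u_k$ with $(1-P_1)u_k\in N$. By Theorem~\ref{th7.2}, $B_A$ restricts to an isomorphism $P_1(H^{\alpha}_A(\Omega))\leftrightarrow \mathcal{P}^+_1(\mathcal{H}_\alpha(\Gamma))$; since $g^{(k)}\to g$ in $\mathcal{H}_\alpha(\Gamma)$, applying the (bounded) projector $\mathcal{P}^+_1$ gives $\mathcal{P}^+_1 g^{(k)}\to \mathcal{P}^+_1 g$ in the range space, whence $P_1u_k=(B_A)^{-1}\mathcal{P}^+_1 g^{(k)}$ converges in $P_1(H^{\alpha}_A(\Omega))$, say to $u_\ast:=(B_A)^{-1}\mathcal{P}^+_1 g\in H^{\alpha}_A(\Omega)$. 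Now invoke the embedding $H^{\alpha}(\Omega)\hookrightarrow C^{p}(\overline{\Omega})$ established in the first step: $P_1u_k\to u_\ast$ in $C^{p}(\overline{\Omega})$, i.e.\ $D^\mu P_1u_k\to D^\mu u_\ast$ uniformly on $\overline{\Omega}$ for all $|\mu|\le p$. Set $u:=u_\ast$; then $u\in C^{p}(\overline{\Omega})$ and the uniform-convergence claim holds. Finally, $Au=0$ in $\Omega$ because $u\in H^{\alpha}_A(\Omega)$, and $B_Au=B_A u_\ast=\mathcal{P}^+_1 g$. It remains to see $\mathcal{P}^+_1 g=g$: by definition of $\mathcal{P}^+_1$ as projection onto the second summand of~\eqref{f7.5}, this holds provided $g$ lies in that second summand, i.e.\ $g$ satisfies~\eqref{f7.3}; and indeed each $g^{(k)}=B_Au_k$ lies in the range of~\eqref{f7.2}, which by Theorem~\ref{th7.1} is exactly the set of $g$ satisfying~\eqref{f7.3}, a closed subspace, so the limit $g$ also satisfies~\eqref{f7.3}. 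Hence $B_Au=g$.

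The main obstacle I anticipate is not the logical skeleton but getting the embedding $H^\alpha(\Omega)\hookrightarrow C^p(\overline\Omega)$ cleanly from~\eqref{Hermander-embedding}: the latter is stated over $\mathbb{R}^n$ with a support condition, so one has to argue that an arbitrary $u\in H^\alpha(\Omega)$ extends to some $w\in H^\alpha(\mathbb{R}^n)$ (by definition of $H^\alpha(\Omega)$), that $w\in C^p(\mathbb{R}^n)$ by~\eqref{Hermander-embedding} with $U=\mathbb{R}^n$, and that $w\!\upharpoonright\!\overline\Omega$ depends continuously on $u$ in the relevant norms — the continuity being what upgrades pointwise/uniform convergence of $P_1u_k$ from $H^\alpha(\Omega)$-convergence. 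One should also be slightly careful that $P_1$ is the restriction of the projector $P$ of Theorem~\ref{th2} (as noted in the paragraph preceding Theorem~\ref{th7.2}) so that $P_1u_k$ makes sense and the decomposition~\eqref{f7.4} is the one being used; and that $\mathcal{P}^+_1$ is bounded on $\mathcal{H}_\alpha(\Gamma)$, which follows from~\eqref{f7.5} being a topological direct sum with finite-dimensional first summand. Everything else is bookkeeping with Theorems~\ref{th4.6}, \ref{th7.1}, \ref{th7.2} and Remark~\ref{rem4.11}.
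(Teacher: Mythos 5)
Your proposal is correct and follows essentially the same route as the paper's proof: the regularity theorem (the paper invokes Theorem~\ref{th7.3} with $\Gamma_0=\Gamma$, which is itself Theorem~\ref{th4.6}/\ref{th4.7} plus Remark~\ref{rem4.11}) gives $u_k\in H^{\alpha}(\Omega)\hookrightarrow C^{p}(\overline{\Omega})$ via \eqref{Hermander-embedding}, and then the isomorphism of Theorem~\ref{th7.2} applied to $g^{(k)}\to g$ in the closed subspace $\mathcal{P}^{+}_{1}(\mathcal{H}_{\alpha}(\Gamma))$ yields $P_{1}u_{k}\to u$ in $H^{\alpha}(\Omega)$ and hence in $C^{p}(\overline{\Omega})$. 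Your extra care about why $g$ satisfies \eqref{f7.3} and why the embedding is continuous only makes explicit what the paper leaves implicit.
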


In this theorem, the vectors $B_{A}u_{k}$ and $B_{A}u$ are well defined by means of the operator \eqref{f7.2} because $u_{k}\in H^{-r}_{A}(\Omega)$ whenever $r\gg1$ due to the hypotheses of the theorem and because $u\in H^{p}_{A}(\Omega)$ due to its conclusion. If $p\leq m_{j}-1$, the smoothness of $u_{k}$ and $u$ is not sufficient to find the $j$-th components of $B_{A}u_{k}$ and $B_{A}u$ with the help of classical derivatives. Recall that the hypothesis $Au_{k}=0$ in $\Omega$ and the conclusion $Au=0$ in $\Omega$ are understood in the distribution theory sense and imply the inclusions of $u_{k}$ and $u$ in $C^{\infty}(\Omega)$. Note if $N=\{0\}$, then $P_{1}u_{k}=u_{k}$.

It is useful to compare this theorem with the classical Harnack theorem on the uniform convergence of a sequence of harmonic functions on $\overline{\Omega}$ (see, e.g., \cite[Section~2.6]{GilbargTrudinger98}).  The latter theorem (also called the Bauer convergence property) relates to the case where $A$ is the Laplace operator, $B_{A}u:=u\!\upharpoonright\!\Gamma$ for every $u\in C^{\infty}_{A}(\overline{\Omega})$, and $p=0$ in Theorem~\ref{th7.5}. In this case, $H_{\alpha}(\Gamma)=H^{\alpha\varrho^{-1/2}}(\Gamma)\hookrightarrow C(\Gamma)$ due to condition \eqref{f7.9} and property \eqref{Hermander-embedding} considered for $C(\mathbb{R}^{n-1})$ instead of $C^{p}(\mathbb{R}^{n})$. Hence, the conclusion of Theorem~\ref{th7.5} about the uniform convergence of the sequence of harmonic functions $u_{k}$ follows from the Harnak theorem. However, for first-order boundary conditions, Theorem~\ref{th7.5} gives weak enough and new sufficient conditions for this convergence. Thus, considering the Neumann boundary condition, we conclude by Theorem~\ref{th7.5} that the sequence of harmonic functions $u_{k}\in\mathcal{S}'(\Omega)$ subject to $(u_{k},1)_{\Omega}=0$ converges uniformly on $\overline{\Omega}$ if the sequence of traces of their normal derivatives converges in the space $H^{\omega}(\Gamma)$ where $\omega(t):=t^{(n-3)/2}\log(1+t)$ whenever $t\geq1$, e.g. In the $n=2$ case, this space is broader than $H^{-1/2+}(\Gamma)$.

Consider a version of Theorem~\ref{th7.5} for an open subset $\Gamma_{0}\neq\emptyset$ of the boundary $\Gamma$. Given $\alpha\in\mathrm{OR}$, we introduce the linear space
\begin{equation*}
\mathcal{H}_{\alpha}(\Gamma_{0}):=\{g\!\upharpoonright\!\Gamma_{0}:g\in \mathcal{H}_{\alpha}(\Gamma)\}
\end{equation*}
endowed with the norm
\begin{equation*}
\|h\|_{\alpha,\Gamma_{0}}':=
\inf\bigl\{\,\|g\|_{\alpha,\Gamma}':
g\in\mathcal{H}_{\alpha}(\Gamma),\;g=h\;\,\mbox{in}\;\,\Gamma_{0}\bigr\}
\end{equation*}
of $h\in\mathcal{H}_{\alpha}(\Gamma_{0})$.

\begin{theorem}\label{th7.6}
Let $0\leq p\in\mathbb{Z}$. Assume that a sequence $(u_{k})_{k=1}^{\infty}\subset\mathcal{S}'(\Omega)$ satisfies the following three conditions: $Au_{k}=0$ in $\Omega$ whenever $k\geq1$, this sequence converges in the Sobolev space $H^{-r}(\Omega)$ if $r\gg1$, and the sequence of the distributions $(B_{A}u_{k})\!\upharpoonright\!\Gamma_{0}$ converges in the space $\mathcal{H}_{\alpha}(\Gamma_{0})$ for some $\alpha\in\mathrm{OR}$ subject to \eqref{f7.9}. Then every $u_{k}\in C^{p}(\Omega\cup\Gamma_{0})$, and each sequence  $(D^{\mu}u_{k})_{k=1}^{\infty}$, with $|\mu|\leq p$, converges uniformly on every closed (in $\mathbb{R}^{n}$) subset of $\Omega\cup\Gamma_{0}$.
\end{theorem}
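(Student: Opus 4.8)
The plan is to reduce Theorem~\ref{th7.6} to the local regularity result (Theorem~\ref{th7.3}, equivalently Theorems~\ref{th4.7} and~\ref{th4.9}) and the local \textit{a priori} estimate (Theorem~\ref{th7.4}), the estimate being applied to the \emph{differences} $u_k-u_m$, and then to pass from convergence in $H^{\alpha}$ to uniform convergence of derivatives via H\"ormander's embedding \eqref{Hermander-embedding}.

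First I would fix the limit $u$ of $(u_k)$ in $H^{-r}(\Omega)$ and, enlarging $r$, assume $r>-\sigma_{0}(\alpha)$. Since $A$ is continuous on $\mathcal{D}'(\Omega)$ we get $Au=0$, so $u,u_k\in H^{-r}_{A}(\Omega)$; by Theorem~\ref{th7.1} applied with $\varrho^{-r}$ in place of $\alpha$, the operator $B_{A}\colon H^{-r}_{A}(\Omega)\to\mathcal{H}_{\varrho^{-r}}(\Gamma)$ is bounded, hence $B_{A}u_k\to B_{A}u$ in $\mathcal{H}_{\varrho^{-r}}(\Gamma)$. Restricting to $\Gamma_0$ and using the continuous embedding $\mathcal{H}_{\alpha}(\Gamma_0)\hookrightarrow\mathcal{H}_{\varrho^{-r}}(\Gamma_0)$, which follows from \eqref{f3.3} (valid also on $\Gamma$) together with $r>-\sigma_{0}(\alpha)$, I identify $(B_{A}u)\!\upharpoonright\!\Gamma_0$ with the $\mathcal{H}_{\alpha}(\Gamma_0)$-limit $h$ of the sequence $h^{(k)}:=(B_{A}u_k)\!\upharpoonright\!\Gamma_0$. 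Consequently $u$, every $u_k$, and every difference $u_k-u_m$ is a generalized solution of \eqref{f1}, \eqref{f2} with $f=0$ whose boundary data, restricted to $\Gamma_0$, lie in $\mathcal{H}_{\alpha}(\Gamma_0)$ and therefore satisfy \eqref{f7.7}. In particular Theorem~\ref{th4.9} (whose integral condition \eqref{int-cond} for $\varphi:=\alpha\varrho^{-2q}$ is precisely \eqref{f7.9}) shows that $u,u_k\in C^{p}(\Omega\cup\Gamma_0)$, while Theorem~\ref{th7.3} gives $u,u_k\in H^{\alpha}_{\mathrm{loc}}(\Omega,\Gamma_0)$.

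Next I would establish the uniform convergence. Let $K$ be closed in $\mathbb{R}^{n}$ with $K\subset\Omega\cup\Gamma_0$; since $\overline{\Omega}$ is compact, $K$ is a compact subset of the set $\Omega\cup\Gamma_0$, which is open in $\overline{\Omega}$, so I may choose $\chi,\zeta\in C^{\infty}(\overline{\Omega})$ with $\chi=1$ in a neighbourhood of $K$, $\zeta=1$ in a neighbourhood of $\mathrm{supp}\,\chi$, and $\mathrm{supp}\,\chi\subset\mathrm{supp}\,\zeta\subset\Omega\cup\Gamma_0$. The decisive step is to pick $\ell>0$ so large that $\sigma_{1}(\alpha)-\ell<-r$, which by \eqref{f3.3} (valid also on $\Omega$) yields the continuous embedding $H^{-r}(\Omega)\hookrightarrow H^{\alpha\varrho^{-\ell}}(\Omega)$. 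Applying Theorem~\ref{th7.4} to the generalized solution $u_k-u_m$ gives
\begin{equation*}
\|\chi(u_k-u_m)\|_{\alpha,\Omega}\le c\bigl(\|\zeta(B_{A}u_k-B_{A}u_m)\|_{\alpha,\Gamma}'+\|\zeta(u_k-u_m)\|_{\alpha\varrho^{-\ell},\Omega}\bigr)
\end{equation*}
with $c$ independent of $k$ and $m$. Because $\zeta\!\upharpoonright\!\Gamma$ has support in $\Gamma_0$, the first term on the right depends only on $h^{(k)}-h^{(m)}$ and is $\le c'\|h^{(k)}-h^{(m)}\|_{\alpha,\Gamma_0}'\to0$; by the choice of $\ell$ and the boundedness of multiplication by $\zeta$ on Sobolev spaces, the second term is $\le c''\|u_k-u_m\|_{-r,\Omega}\to0$. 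Hence $(\chi u_k)$ is a Cauchy sequence in $H^{\alpha}(\Omega)$, and since $\chi u_k\to\chi u$ in $\mathcal{D}'(\Omega)$ its limit in $H^{\alpha}(\Omega)$ is $\chi u$. Every element of $H^{\alpha}(\Omega)$ is the restriction to $\Omega$ of an element of $H^{\alpha}(\mathbb{R}^{n})$ of comparable norm, and \eqref{f7.9} together with \eqref{Hermander-embedding} (with $U=\mathbb{R}^{n}$) gives the continuous embedding $H^{\alpha}(\mathbb{R}^{n})\hookrightarrow C^{p}(\mathbb{R}^{n})$ (with the sup-norms of derivatives up to order $p$); therefore $\chi(u_k-u)\to0$ in $H^{\alpha}(\Omega)$ forces $D^{\mu}(\chi u_k)\to D^{\mu}(\chi u)$ uniformly on $\overline{\Omega}$ for all $|\mu|\le p$, and restricting to $K$, where $\chi\equiv1$, I obtain the asserted uniform convergence of $(D^{\mu}u_k)$ on every closed subset of $\Omega\cup\Gamma_0$.

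The main obstacle is to bridge the gap between the weak hypotheses — convergence of $(u_k)$ only in a low-order space $H^{-r}(\Omega)$, and convergence of the boundary data only on $\Gamma_0$ — and the much stronger conclusion. Two features make this work: in Theorem~\ref{th7.4} the number $\ell$ may be taken arbitrarily large, so the lower-order remainder in the \textit{a priori} estimate is controlled directly by the hypothesis, with no iteration needed; and the cut-off $\zeta$ in that estimate is supported in $\Gamma_0$, so the boundary term never involves the (possibly very rough) restriction of the data to $\Gamma\setminus\Gamma_0$. The remaining technical care lies in the identification $(B_{A}u)\!\upharpoonright\!\Gamma_0=h$ in the first step, which is what legitimises applying Theorems~\ref{th7.3}, \ref{th4.9} and~\ref{th7.4} to $u$ and to the differences $u_k-u_m$.
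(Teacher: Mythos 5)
Your proof is correct and follows essentially the same route as the paper's: local regularity (Theorem~\ref{th7.3} together with H\"ormander's embedding \eqref{Hermander-embedding}) to get $u_k\in C^{p}(\Omega\cup\Gamma_0)$, then the local \textit{a priori} estimate of Theorem~\ref{th7.4} applied to differences of solutions with suitable cut-offs $\chi,\zeta$ supported in $\Omega\cup\Gamma_0$, and finally the embedding into $C^{p}$ again. The only (harmless) deviations are that you run a Cauchy argument with $u_k-u_m$ where the paper estimates $u_k-u$ directly, and that you spell out the identification of $(B_{A}u)\!\upharpoonright\!\Gamma_0$ with the $\mathcal{H}_{\alpha}(\Gamma_0)$-limit and the bound $\|\zeta B_{A}(u_k-u_m)\|_{\alpha,\Gamma}'\leq c'\|h^{(k)}-h^{(m)}\|_{\alpha,\Gamma_0}'$, details the paper leaves implicit.
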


\begin{proof}[Proof of Theorem $\ref{th7.5}$.]
By Theorem~\ref{th7.3} in the $\Gamma_{0}=\Gamma$ case and by the H\"ormander embedding theorem \eqref{Hermander-embedding}, we conclude that each $u_{k}\in H^{\alpha}(\Omega)\hookrightarrow C^{p}(\overline{\Omega})$. Since $g^{(k)}\to g$ in the subspace $\mathcal{P}^{+}_{1}(\mathcal{H}_{\alpha}(\Gamma))$ of $\mathcal{H}_{\alpha}(\Gamma)$, Theorem~\ref{th7.2} implies the convergence $P_{1}u_{k}\to u$ in $H^{\alpha}(\Omega)$, where $u\in P_{1}(H^{\alpha}_{A}(\Omega))$ is the inverse image of $g$ under the isomorphism \eqref{f7.6}. This gives the conclusion of Theorem~\ref{th7.5}  due to the continuous embedding $H^{\alpha}(\Omega)$ in $C^{p}(\overline{\Omega})$.
\end{proof}

\begin{proof}[Proof of Theorem $\ref{th7.6}$.]
Since every $(B_{A}u_{k})\!\upharpoonright\!\Gamma_{0}$ belongs to $\mathcal{H}_{\alpha}(\Gamma_{0})$, we have the inclusion
\begin{equation*}
B_{A}u_{k}\in\prod_{j=1}^{q}
H^{\alpha\rho^{-m_j-1/2}}_{\mathrm{loc}}(\Gamma_0).
\end{equation*}
Hence, every $u_{k}\in H^{\alpha}_{\mathrm{loc}}(\Omega,\Gamma_{0})$ due to Theorem~\ref{th7.3}, which implies by \eqref{Hermander-embedding} that $u_{k}\in C^{p}(\Omega\cup\Gamma_{0})$. By the hypotheses of Theorem~\ref{th7.6}, we have
\begin{equation}\label{f7.10}
u_{k}\to u\quad\mbox{in}\quad H^{-r}(\Omega)\quad\mbox{for some}\quad
u\in H^{-r}_{A}(\Omega)
\end{equation}
and
\begin{equation}\label{f7.11}
(B_{A}u_{k})\!\upharpoonright\!\Gamma_{0}\to (B_{A}u)\!\upharpoonright\!\Gamma_{0}\quad\mbox{in}\quad \mathcal{H}_{\alpha}(\Gamma_{0})
\end{equation}
as $k\to\infty$. Thus, the distribution $u_{k}-u$ satisfies the hypotheses of Theorem~\ref{th7.3} in which we take $u_{k}-u$ instead of $u$. Hence, we may apply Theorem~\ref{th7.4} to $u_{k}-u$. Let $G$ be a nonempty closed subset of $\Omega\cup\Gamma_{0}$. Choose functions $\chi,\zeta\in C^{\infty}(\overline{\Omega})$ such that $\mathrm{supp}\,\chi\subset \mathrm{supp}\,\zeta\subset\Omega\cup\Gamma_{0}$, $\chi=1$ in a neighbourhood of $G$, and $\zeta=1$ in a neighbourhood of $\mathrm{supp}\,\chi$. According to Theorem~\ref{th7.4}, we have the inequality
\begin{equation*}
\|\chi(u_{k}-u)\|_{\alpha,\Omega}\leq c\,
\bigl(\|\zeta B_{A}(u_{k}-u)\|_{\alpha,\Gamma}'+
\|\zeta(u_{k}-u)\|_{-r,\Omega}\bigr),
\end{equation*}
where the number $c>0$ does not depend on $u_{k}-u$. Hence, $\chi u_{k}\to\chi u$ in $H^{\alpha}(\Omega)$ as $k\to\infty$ due to \eqref{f7.10} and \eqref{f7.11}. Therefore, $\chi u_{k}\to\chi u$ in $C^{p}(\overline{\Omega})$ by \eqref{Hermander-embedding}, which implies that the sequence $(D^{\mu}u_{k})_{k=1}^{\infty}$ converges uniformly on $G$ whenever $|\mu|\leq p$.
\end{proof}

\begin{remark}\label{rem7.7}
Considering Theorems \ref{th7.5} and \ref{th7.6}, it is useful to take into account the following property: if a sequence $(u_{k})_{k=1}^{\infty}\subset\mathcal{S}'(\Omega)$ satisfies the first two conditions formulated in Theorem~\ref{th7.6}, the sequence $(D^{\mu}u_{k})_{k=1}^{\infty}$ will converge uniformly on every closed subset $G$ of $\Omega$ for every multi-index $\mu$. This fact is known and follows from the internal a priory estimate
\begin{equation}\label{f7.12}
\|\chi(u_{k}-u)\|_{\ell,\Omega}\leq c_0\|u_{k}-u\|_{-r,\Omega}\to0
\quad\mbox{as}\quad k\to\infty
\end{equation}
in Sobolev spaces. Here, $\ell\gg1$, $u$ is the limit of the sequence $(u_{k})_{k=1}^{\infty}$, $\chi\in C^{\infty}(\overline{\Omega})$, $\mathrm{supp}\,\chi\subset\Omega$, $\chi=1$ in a neighbourhood of $G$,
and $c_0$ is some positive number that does not depend on $u_{k}-u$. It follows from \eqref{f7.12} by the Sobolev embedding theorem, that $\chi u_{k}\to\chi u$ in $C^{p}(\overline{\Omega})$ whenever $0\leq p\in\mathbb{Z}$, which yields the uniform convergence of $(D^{\mu}u_{k})_{k=1}^{\infty}$ on $G$ for every $\mu$. The estimate \eqref{f7.12} is known (see, e.g., \cite[Theorem~7.2.2]{Roitberg96}).
\end{remark}

\subsection{Interpolation properties of related spaces}\label{sec7.3}
Consider the Hilbert spaces $H^{\alpha}_{A}(\Omega)$, where $\alpha\in\mathrm{OR}$, formed by solutions to the homogeneous elliptic equation $Au=0$ in $\Omega$. These spaces have analogous interpolation properties to that of $H^{\alpha}(\Omega)$.

\begin{theorem}\label{th7.8}
\begin{itemize}
  \item[(i)] Under the hypotheses of Proposition~$\ref{prop5.1}$, we have
\begin{equation*}
[H^{r_0}_{A}(\Omega),H^{r_1}_{A}(\Omega)]_{\psi}=H^{\alpha}_{A}(\Omega)
\end{equation*}
up to equivalence of norms.
  \item[(ii)] Under the hypotheses of Proposition~$\ref{prop5.2}$, we have
\begin{equation*}
[H^{\alpha_0}_{A}(\Omega),H^{\alpha_1}_{A}(\Omega)]_{\psi}=
H^{\alpha}_{A}(\Omega)
\end{equation*}
up to equivalence of norms.
\end{itemize}
\end{theorem}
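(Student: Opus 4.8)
The plan is to reduce both assertions to the interpolation of the boundary operator $B_A$ supplied by Theorem~\ref{th7.1}, together with the interpolation formulas for the spaces $H^{\alpha}(\Gamma)$ from Section~\ref{sec5} and the theorem on interpolation of Fredholm operators. Throughout, let $\psi$ denote the interpolation parameter under consideration; in case~(i) set $\alpha_0(t):=t^{r_0}$ and $\alpha_1(t):=t^{r_1}$, so that $H^{r_j}_A(\Omega)=H^{\alpha_j}_A(\Omega)$, and in both cases we are in the situation of two function parameters $\alpha_0,\alpha_1\in\mathrm{OR}$ with $\alpha_0/\alpha_1$ bounded near infinity, the parameter $\psi$, and $\alpha$ defined as in Proposition~\ref{prop5.1} (case~(i)) or Proposition~\ref{prop5.2} (case~(ii)). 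The pair $[H^{\alpha_0}_A(\Omega),H^{\alpha_1}_A(\Omega)]$ is regular: boundedness of $\alpha_0/\alpha_1$ gives the continuous embedding $H^{\alpha_1}_A(\Omega)\hookrightarrow H^{\alpha_0}_A(\Omega)$, which is dense since $C^{\infty}_A(\overline{\Omega})$ is dense in each factor by Theorem~\ref{th7.1}.

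First I would prove the inclusion $[H^{\alpha_0}_A(\Omega),H^{\alpha_1}_A(\Omega)]_\psi\hookrightarrow H^{\alpha}_A(\Omega)$. Interpolating the bounded embeddings $H^{\alpha_j}_A(\Omega)\hookrightarrow H^{\alpha_j}(\Omega)$, $j\in\{0,1\}$, and using Proposition~\ref{prop5.1} or Proposition~\ref{prop5.2}, one obtains a bounded embedding of $[H^{\alpha_0}_A(\Omega),H^{\alpha_1}_A(\Omega)]_\psi$ into $H^{\alpha}(\Omega)$. Since $A$ is continuous on $\mathcal{D}'(\Omega)$, the subspace $H^{\alpha}_A(\Omega)$ is closed in $H^{\alpha}(\Omega)$; as its dense subset $H^{\alpha_1}_A(\Omega)$ is carried into $H^{\alpha}_A(\Omega)$, so is the whole interpolation space, which gives the desired continuous embedding.

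For the reverse inclusion I would interpolate $B_A$. By Theorem~\ref{th7.1} applied to $\alpha_0$ and to $\alpha_1$, the operator $B_A$ is Fredholm from $H^{\alpha_j}_A(\Omega)$ onto $\mathcal{H}_{\alpha_j}(\Gamma)$, with the $j$-independent kernel $N$ and index $\dim N-\dim N^{+}_{1}$, and with range $\{g\in\mathcal{H}_{\alpha_j}(\Gamma):\text{\eqref{f7.3} holds}\}$. Interpolating the orthogonal sums $\mathcal{H}_{\alpha_j}(\Gamma)=\bigoplus_{i=1}^{q}H^{\alpha_j\varrho^{-m_i-1/2}}(\Gamma)$ by \cite[Theorem~1.5]{MikhailetsMurach14} and each summand by Proposition~\ref{prop5.2} (the endpoint orders having ratio $\alpha_0/\alpha_1$; the resulting parameter on the $i$-th component is $\alpha\varrho^{-m_i-1/2}$, by the same computation as for the second summands of \eqref{f15} in the proof of Theorem~\ref{th1}), one gets $[\mathcal{H}_{\alpha_0}(\Gamma),\mathcal{H}_{\alpha_1}(\Gamma)]_\psi=\mathcal{H}_{\alpha}(\Gamma)$ up to equivalence of norms. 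By the theorem on interpolation of Fredholm operators \cite[Theorem~1.7]{MikhailetsMurach14}, $B_A$ then defines a Fredholm operator from $[H^{\alpha_0}_A(\Omega),H^{\alpha_1}_A(\Omega)]_\psi$ to $\mathcal{H}_{\alpha}(\Gamma)$ with kernel $N$ and range $\mathcal{H}_{\alpha}(\Gamma)\cap B_A(H^{\alpha_0}_A(\Omega))=\{g\in\mathcal{H}_{\alpha}(\Gamma):\text{\eqref{f7.3} holds}\}$, which by Theorem~\ref{th7.1} coincides with $B_A(H^{\alpha}_A(\Omega))$. Hence, given $u\in H^{\alpha}_A(\Omega)$, there exists $\widetilde{u}\in[H^{\alpha_0}_A(\Omega),H^{\alpha_1}_A(\Omega)]_\psi$ with $B_A\widetilde{u}=B_A u$; then $u-\widetilde{u}$ belongs to the kernel of $B_A$ on $H^{\alpha}_A(\Omega)$, that is, $u-\widetilde{u}\in N\subset C^{\infty}_A(\overline{\Omega})\subset[H^{\alpha_0}_A(\Omega),H^{\alpha_1}_A(\Omega)]_\psi$, so $u\in[H^{\alpha_0}_A(\Omega),H^{\alpha_1}_A(\Omega)]_\psi$. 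Together with the first inclusion this shows that the two Hilbert spaces coincide as sets; both are complete and the norm of the interpolation space dominates that of $H^{\alpha}_A(\Omega)$, so Banach's theorem on the inverse operator yields equivalence of norms. Case~(ii) is obtained verbatim, using Proposition~\ref{prop5.2} in place of Proposition~\ref{prop5.1} throughout.

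The step I expect to demand the most care is the boundary-side interpolation, namely verifying $[\mathcal{H}_{\alpha_0}(\Gamma),\mathcal{H}_{\alpha_1}(\Gamma)]_\psi=\mathcal{H}_{\alpha}(\Gamma)$ with exactly the parameter $\alpha\varrho^{-m_i-1/2}$ on the $i$-th component, and then matching the hypotheses of \cite[Theorem~1.7]{MikhailetsMurach14}: that the endpoint operators share the kernel $N$ and the index $\dim N-\dim N^{+}_{1}$, and that the range of the interpolated operator is the asserted intersection. The elliptic content is wholly contained in Theorem~\ref{th7.1}; what remains is bookkeeping with interpolation of orthogonal sums, of Fredholm operators, and the two-norm lemma.
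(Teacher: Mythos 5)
Your proof is correct, but it takes a noticeably different route from the paper's. The paper does not interpolate the Fredholm operators $B_{A}$ from Theorem~\ref{th7.1}; instead it interpolates the \emph{isomorphisms} \eqref{f7.6} from Theorem~\ref{th7.2}, identifies the interpolated target space $\bigl[\mathcal{P}^{+}_{1}(\mathcal{H}_{\alpha_0}(\Gamma)),\mathcal{P}^{+}_{1}(\mathcal{H}_{\alpha_1}(\Gamma))\bigr]_{\psi}$ with $\mathcal{P}^{+}_{1}(\mathcal{H}_{\alpha}(\Gamma))$ via Proposition~\ref{prop5.2} and the theorem on interpolation of subspaces, reads off $\bigl[P_{1}(H^{\alpha_0}_{A}(\Omega)),P_{1}(H^{\alpha_1}_{A}(\Omega))\bigr]_{\psi}=P_{1}(H^{\alpha}_{A}(\Omega))$ directly (interpolating isomorphisms gives equality with equivalent norms at once), and then reattaches $N$ by the theorem on interpolation of orthogonal sums. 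It also reduces (i) to (ii) rather than treating both uniformly. Your version keeps the kernel $N$ inside throughout, uses the Fredholm interpolation theorem plus an explicit two-inclusion argument — the forward inclusion via interpolating the embeddings $H^{\alpha_j}_{A}(\Omega)\hookrightarrow H^{\alpha_j}(\Omega)$ and the closedness of $H^{\alpha}_{A}(\Omega)$, the reverse via lifting $B_{A}u$ through the interpolated operator and absorbing the difference into $N$ — and closes with the inverse mapping theorem. The trade-off: the paper's argument is shorter because interpolated isomorphisms carry the norm equivalence for free, while yours avoids the projectors $P_{1},\mathcal{P}^{+}_{1}$ and the interpolation-of-subspaces theorem entirely, relying only on Theorem~\ref{th7.1} and standard Fredholm interpolation; the elliptic input and the boundary-side computation $[\mathcal{H}_{\alpha_0}(\Gamma),\mathcal{H}_{\alpha_1}(\Gamma)]_{\psi}=\mathcal{H}_{\alpha}(\Gamma)$ are the same in both.
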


This theorem shows that the class of spaces
\begin{equation}\label{f7.13}
\{H^{\alpha}(\Omega):\alpha\in\mathrm{OR}\}
\end{equation}
is obtained by the interpolation with a function parameter between their Sobolev analogs and is closed with respect to the interpolation with a function parameter between Hilbert spaces.

\begin{theorem}\label{th7.9}
Let $r_{0},r_{1}\in\mathbb{R}$ and $r_{0}<r_{1}$. A Hilbert space $H$ is an interpolation space between the spaces $H^{r_{0}}_{A}(\Omega)$ and $H^{r_{1}}_{A}(\Omega)$ if and only if $H=H^{\alpha}_{A}(\Omega)$ up to equivalence of norms for some function parameter $\alpha\in\mathrm{OR}$ that satisfies condition~\eqref{f3.2}.
\end{theorem}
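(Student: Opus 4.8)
The plan is to reduce Theorem~\ref{th7.9} to the analogous statement for the full extended Sobolev scale, which is already available: by Proposition~\ref{prop5.1} together with Ovchinnikov's theorem (as recalled in Section~\ref{sec5}), the class $\{H^\alpha(\Omega):\alpha\in\mathrm{OR}\}$ coincides up to equivalence of norms with the collection of all Hilbert spaces that are interpolation spaces between $H^{r_0}(\Omega)$ and $H^{r_1}(\Omega)$. So the whole game is to transport the ``interpolation space'' property back and forth between the ambient pair $[H^{r_0}(\Omega),H^{r_1}(\Omega)]$ and the pair of \emph{subspaces} $[H^{r_0}_A(\Omega),H^{r_1}_A(\Omega)]$. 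The natural tool for this is a bounded linear projector $\Pi$ of $H^{r}(\Omega)$ onto $H^{r}_A(\Omega)$ that is simultaneously bounded on the whole Sobolev scale (or at least on $H^{r_0}$ and $H^{r_1}$), so that $H^{r}_A(\Omega)$ becomes a \emph{complemented} subspace uniformly in $r$.

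First I would construct such a projector. Fix an auxiliary regular elliptic boundary-value problem whose Dirichlet-type boundary conditions $\{\partial_\nu^{j-1}\cdot\}_{j=1}^q$ make $(A,\partial_\nu^{0},\dots,\partial_\nu^{q-1})$ an isomorphism modulo finite-dimensional defects; more simply, consider the operator $R$ that solves $Av=0$ in $\Omega$ with prescribed Dirichlet data equal to those of a given $u$, corrected by the finite-rank projections coming from $N$ and $N^+_1$ exactly as in the decomposition \eqref{f7.4}. Concretely, using Theorem~\ref{th7.1} and Theorem~\ref{th7.2}, the map $u\mapsto B_A^{-1}\mathcal P^+_1 B_A(\text{harmonic part of }u)$ together with the finite-rank piece onto $N$ yields a bounded linear projector $\Pi_r:H^r(\Omega)\to H^r_A(\Omega)$ for every $r\in\mathbb R$, and — crucially — the mappings defining these projectors do not depend on $r$ (this independence is asserted for $P_1$, $\mathcal P^+_1$ in Section~\ref{sec7.1}). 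Hence $\Pi:=\Pi_{r_0}$ restricts to $\Pi:H^{r_1}(\Omega)\to H^{r_1}_A(\Omega)$, i.e. $\Pi$ is bounded on both $X_0:=H^{r_0}(\Omega)$ and $X_1:=H^{r_1}(\Omega)$, with range $H^{r_j}_A(\Omega)$ and $\Pi^2=\Pi$.

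With $\Pi$ in hand both implications are routine. For the ``if'' direction: if $\alpha\in\mathrm{OR}$ satisfies \eqref{f3.2}, then $H^\alpha_A(\Omega)$ is an interpolation space between $H^{r_0}_A(\Omega)$ and $H^{r_1}_A(\Omega)$ by Theorem~\ref{th7.8}(i) applied with $r_0<\sigma_0(\alpha)\le\sigma_1(\alpha)<r_1$ (such $r_0,r_1$ exist by \eqref{f3.2}, and for a given pair one uses the reiteration/monotonicity of the scale). For the ``only if'' direction: let $H$ be an interpolation space between $H^{r_0}_A(\Omega)$ and $H^{r_1}_A(\Omega)$. I claim $H$ is then also an interpolation space between $H^{r_0}(\Omega)$ and $H^{r_1}(\Omega)$. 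Indeed, property (a) (the dense continuous embeddings $X_1\hookrightarrow H\hookrightarrow X_0$) follows since $H^{r_1}_A(\Omega)\hookrightarrow H\hookrightarrow H^{r_0}_A(\Omega)$ and these in turn embed densely and continuously into $H^{r_1}(\Omega)$, $H^{r_0}(\Omega)$ (density of $C^\infty_A(\overline\Omega)$, hence of $H^{r_1}_A$, inside the ambient spaces can be arranged, or one passes to the closure). Property (b): given a linear operator $T$ bounded on $H^{r_0}(\Omega)$ and on $H^{r_1}(\Omega)$, the conjugated operator $\Pi T\iota$, where $\iota:H^{r_j}_A(\Omega)\hookrightarrow H^{r_j}(\Omega)$ is the inclusion, is bounded on $H^{r_0}_A(\Omega)$ and on $H^{r_1}_A(\Omega)$; since $H$ is an interpolation space for the subspace pair, $\Pi T\iota$ is bounded on $H$; but on $H\subset H^{r_0}_A(\Omega)=\operatorname{ran}\Pi|_{X_0}$ one has $\Pi T\iota = \Pi T$ and $\Pi$ acts as the identity, whence $T$ itself is bounded on $H$. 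Therefore $H$ is an interpolation space between $H^{r_0}(\Omega)$ and $H^{r_1}(\Omega)$, so by Proposition~\ref{prop5.1} and Ovchinnikov's theorem $H=H^\beta(\Omega)$ up to equivalence of norms for some $\beta\in\mathrm{OR}$ with $r_0<\sigma_0(\beta)\le\sigma_1(\beta)<r_1$; but $H\subset H^{r_0}_A(\Omega)$ forces every element of $H$ to satisfy $Au=0$, so $H\subset H^\beta_A(\Omega)$, and a dimension/density comparison (or applying $\Pi$, which is the identity on $H$ and maps $H^\beta(\Omega)$ onto $H^\beta_A(\Omega)$) gives $H=H^\beta_A(\Omega)$ up to equivalence of norms, with $\alpha:=\beta$ satisfying \eqref{f3.2}.

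The main obstacle is the construction and the $r$-uniform boundedness of the projector $\Pi$ onto $H^r_A(\Omega)$. One must check that the finite-rank corrections (onto $N$ and parallel to $N^+_1$) in \eqref{f7.4}, \eqref{f7.5} are given by smooth data and hence extend to bounded operators on every $H^r(\Omega)$, and that the ``solve-with-Dirichlet-data'' operator underlying $B_A^{-1}\mathcal P^+_1$ is bounded $H^r(\Omega)\to H^r_A(\Omega)$ for the two relevant values of $r$ — this is where Theorem~\ref{th7.1}/\ref{th7.2} and the regularity theory of Section~\ref{sec4} do the work. Once $\Pi$ is available with $\Pi^2=\Pi$, $\operatorname{ran}\Pi=H^r_A(\Omega)$, and $\Pi$ independent of $r$, the retraction/corretraction argument above is essentially formal, and everything else is a direct appeal to Theorem~\ref{th7.8} and the already-quoted Ovchinnikov description of the extended Sobolev scale.
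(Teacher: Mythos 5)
Your reduction to the ambient Sobolev couple does not go through, and the detour is in fact unnecessary. The paper's proof applies Ovchinnikov's theorem \emph{directly} to the regular Hilbert couple $[H^{r_0}_A(\Omega),H^{r_1}_A(\Omega)]$: every interpolation space $H$ for this couple equals $[H^{r_0}_A(\Omega),H^{r_1}_A(\Omega)]_\psi$ for some interpolation parameter $\psi\in\mathcal B$, and Theorem~\ref{th7.8}(ii) (with $\alpha_0(t)=t^{r_0}$, $\alpha_1(t)=t^{r_1}$) then identifies this space as $H^{\alpha}_A(\Omega)$ with $\alpha(t)=t^{r_0}\psi(t^{r_1-r_0})$, the condition \eqref{f3.2} being exactly the pseudoconcavity criterion for $\psi$. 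Nothing about the ambient scale $\{H^{\alpha}(\Omega)\}$ is needed.

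The concrete gaps in your route are these. First, in the ``only if'' direction you assert that $H$ is an interpolation space between $H^{r_0}(\Omega)$ and $H^{r_1}(\Omega)$; this is false, because property (a) would require the embedding $H^{r_1}(\Omega)\hookrightarrow H$, whereas $H\subset H^{r_0}_A(\Omega)$ consists only of solutions of $Au=0$ --- the chain $H^{r_1}_A(\Omega)\hookrightarrow H\hookrightarrow H^{r_0}_A(\Omega)\hookrightarrow H^{r_0}(\Omega)$ gives the embeddings in the wrong direction. A retraction argument could at best relate interpolation spaces of the subcouple to \emph{intersections} of interpolation spaces of the big couple with $H^{r_0}_A(\Omega)$, which is a different and more delicate statement. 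Second, the projector $\Pi$ on which everything rests is not constructed: your formula $u\mapsto B_A^{-1}\mathcal P^+_1 B_A u$ requires taking the boundary data $Bu$ of an \emph{arbitrary} $u\in H^{r}(\Omega)$, and these traces do not exist once $r\leq m_j+1/2$ (precisely the difficulty this paper is about); the projectors $P_1$ and $\mathcal P^+_1$ of Section~\ref{sec7} act on $H^{\alpha}_A(\Omega)$ and $\mathcal H_{\alpha}(\Gamma)$, not on the ambient space $H^{\alpha}(\Omega)$, so their $r$-independence does not supply what you need. Third, in the ``if'' direction, condition \eqref{f3.2} for the \emph{given} $r_0,r_1$ permits $\sigma_0(\alpha)=r_0$ or $\sigma_1(\alpha)=r_1$ (when the supremum, resp.\ infimum, in \eqref{f3.2sup}, \eqref{f3.2inf} is attained), so Theorem~\ref{th7.8}(i), which needs strict inequalities, does not cover all admissible $\alpha$; the paper avoids this by checking via \cite[Theorem 4.2]{MikhailetsMurach15ResMath1} that $\psi(t)=t^{-r_0/(r_1-r_0)}\alpha(t^{1/(r_1-r_0)})$ is an interpolation parameter exactly under \eqref{f3.2} and then invoking Theorem~\ref{th7.8}(ii).
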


Of course, we mean in this theorem that the numbers $c_{0}$ and $c_{1}$ in condition \eqref{f3.2} do not depend on $t$ and $\lambda$. This condition is equivalent to the following pair of conditions:
\begin{enumerate}
\item [$\mathrm{(i)}$] $r_{0}\leq\sigma_{0}(\varphi)$ and, moreover,
$r_{0}<\sigma_{0}(\varphi)$ if the supremum in $\eqref{f3.2sup}$ is not attained;
\item [$\mathrm{(ii)}$] $\sigma_{1}(\varphi)\leq r_{1}$ and, moreover,
$\sigma_{1}(\varphi)<r_{1}$ if the infimum in $\eqref{f3.2inf}$ is not attained.
\end{enumerate}

Theorem~$\ref{th7.9}$ reveals that the class \eqref{f7.13} coincides up to equivalence of norms  with the class of all Hilbert spaces that are interpolation ones between the Sobolev spaces $H^{r_{0}}_{A}(\Omega)$ and $H^{r_{1}}_{A}(\Omega)$ where $r_0,r_1\in\mathbb{R}$ and $r_0<r_1$. If we omit the subscript $A$ in the formulation of Theorem~$\ref{th7.9}$, we will obtain the corresponding interpolation property of the class  $\{H^{\alpha}(\Omega):\alpha\in\mathrm{OR}\}$ proved in \cite[Theorem~2.4]{MikhailetsMurach15ResMath1}.

\begin{proof}[Proof of Theorem $\ref{th7.8}$.]
Assertion (i) is a direct consequence of (ii). Let us prove (ii). Consider the isomorphisms \eqref{f7.6} where $\alpha\in\{\alpha_0,\alpha_1\}$ and interpolate them with the function parameter~$\psi$. Since $\psi$ is an interpolation parameter, we conclude that the restriction of the mapping  \eqref{f7.6}, where $\alpha=\alpha_0$, is an isomorphism
\begin{equation}\label{f7.14}
B_{A}:\bigl[P_{1}(H^{\alpha_0}_{A}(\Omega)),
P_{1}(H^{\alpha_1}_{A}(\Omega))\bigr]_{\psi}\leftrightarrow
\bigl[\mathcal{P}^{+}_{1}(\mathcal{H}_{\alpha_0}(\Gamma)),
\mathcal{P}^{+}_{1}(\mathcal{H}_{\alpha_0}(\Gamma))\bigr]_{\psi}.
\end{equation}
According to Proposition~\ref{prop5.2} and the theorem on interpolation of subspaces \cite[Theorem~1.6]{MikhailetsMurach14}, the range of \eqref{f7.14} equals
\begin{equation*}
[\mathcal{H}_{\alpha_0}(\Gamma),\mathcal{H}_{\alpha_1}(\Gamma)]_{\psi}\cap
\mathcal{P}^{+}_{1}(\mathcal{H}_{\alpha_0}(\Gamma))=
\mathcal{H}_{\alpha}(\Gamma)\cap \mathcal{P}^{+}_{1}(\mathcal{H}_{\alpha_0}(\Gamma)) =\mathcal{P}^{+}_{1}(\mathcal{H}_{\alpha}(\Gamma)).
\end{equation*}
Hence,
\begin{equation}\label{f7.15}
\bigl[P_{1}(H^{\alpha_0}_{A}(\Omega)),
P_{1}(H^{\alpha_1}_{A}(\Omega))\bigr]_{\psi}=
P_{1}(H^{\alpha}_{A}(\Omega))
\end{equation}
due to the isomorphisms \eqref{f7.6} and \eqref{f7.14}. All these equalities of Hilbert spaces hold true up to equivalence of norms.

Given $\omega\in\mathrm{OR}$, we let $\widetilde{H}^{\omega}_{A}(\Omega)$ denote the linear space $H^{\omega}_{A}(\Omega)$ endowed with the equivalent inner product
\begin{equation*}
(P_{1}u,P_{1}v)_{\omega,\Omega}+(u-P_{1}u,u-P_{1}v)_{\omega,\Omega}
\end{equation*}
of functions $u,v\in H^{\omega}_{A}(\Omega)$. Now $\widetilde{H}^{\omega}_{A}(\Omega)$ equals the orthogonal sum $N\oplus P_{1}(H^{\omega}_{A}(\Omega))$. Hence,
\begin{align*}
[H^{\alpha_0}_{A}(\Omega),H^{\alpha_1}_{A}(\Omega)]_{\psi}&=
[\widetilde{H}^{\alpha_0}_{A}(\Omega),
\widetilde{H}^{\alpha_1}_{A}(\Omega)]_{\psi}=
[N,N]_{\psi}\oplus\bigl[P_{1}(H^{\alpha_0}_{A}(\Omega)),
P_{1}(H^{\alpha_1}_{A}(\Omega))\bigr]_{\psi}\\
&=N\oplus P_{1}(H^{\alpha}_{A}(\Omega))=\widetilde{H}^{\alpha}_{A}(\Omega)
=H^{\alpha}_{A}(\Omega)
\end{align*}
up to equivalence of norms due to \eqref{f7.15} and the theorem on interpolation of orthogonal sums of spaces \cite[Theorem~1.5]{MikhailetsMurach14}. Assertion (ii) is proved.
\end{proof}

\begin{proof}[Proof of Theorem $\ref{th7.9}$. Necessity.]
Let a Hilbert space $H$ be an interpolation space between $H^{r_{0}}_{A}(\Omega)$ and $H^{r_{1}}_{A}(\Omega)$. We then conclude by Ovchinnikov's theorem \cite[Theorem 11.4.1]{Ovchinnikov84} that $H=[H^{r_{0}}_{A}(\Omega),H^{r_{1}}_{A}(\Omega)]_{\psi}$ up to equivalence of norms for some interpolation parameter $\psi\in\mathcal{B}$. Hence, $H=H^{\alpha}_{A}(\Omega)$ according to Theorem \ref{th7.8}(ii), where the function $\alpha(t):=t^{r_{0}}\,\psi(t^{r_{1}-r_{0}})$ of $t\geq1$ belongs to $\mathrm{OR}$. This function satisfies \eqref{f3.2} due to \cite[Theorem 4.2]{MikhailetsMurach15ResMath1}. The necessity is proved.

\textit{Sufficiency.} Assume that a Hilbert space $H$ coincides up to equivalence of norms with the space $H^{\alpha}_{A}(\Omega)$ for some $\alpha\in\mathrm{OR}$ subject to \eqref{f3.2}. Define the function $\psi\in\mathcal{B}$ by formula \eqref{f5.1}. Since $\alpha(t)=t^{r_{0}}\,\psi(t^{r_{1}-r_{0}})$ whenever $t\geq1$, the function $\psi$ is an interpolation parameter by \cite[Theorem 4.2]{MikhailetsMurach15ResMath1}. Therefore, $H$ equals $[H^{r_0}_{A}(\Omega),H^{r_1}_{A}(\Omega)]_{\psi}$
up to equivalence of norms due to Theorem \ref{th7.8}(ii). Thus, $H$ is an interpolation space between $H^{r_{0}}_{A}(\Omega)$ and $H^{r_{1}}_{A}(\Omega)$. The sufficiency is also proved.
\end{proof}

\section{Application to elliptic problems with boundary white noise}\label{sec8}

In this section, we apply the above results to some elliptic problems with rough boundary data induced by white noise. In particular, we are interested in boundary data belonging to the Nikolskii space $B_{p,\infty}^s(\Gamma)$ with $s<0$ and $p=2$ (see \cite[Sections 2.3.1 and 4.7.1]{Triebel95} and references therein on works by Nikolskii, e.g. \cite[Section 4.3.3]{Nikolskii77}, who introduced and investigated the space $B_{p,\infty}^s(\mathbb{R}^{n})$ for $s>0$ and $1\leq p\leq\infty$). This is motivated by recent results on Gaussian white noise; see below for details. We start with an embedding result.

\begin{proposition}\label{8.1}
Let $1\leq n\in\mathbb{Z}$, $s\in\mathbb{R}$, and $\alpha\in\mathrm{OR}$. Then the condition
\begin{equation}\label{int-cond-8.1}
\int\limits_{1}^{\infty}\frac{\alpha^{2}(t)}{t^{2s+1}}\,dt<\infty.
\end{equation}
is equivalent to the continuous embedding
\begin{equation}\label{embed}
B^{s}_{2,\infty}(\mathbb{R}^{n})\hookrightarrow H^{\alpha}(\mathbb{R}^{n}).
\end{equation}
\end{proposition}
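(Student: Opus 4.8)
The plan is to reduce the equivalence to a pointwise comparison on the Fourier side and then to a one-variable integral estimate. The Nikolskii space $B^{s}_{2,\infty}(\mathbb{R}^{n})$ has the well-known dyadic (Littlewood--Paley) characterization: $w\in B^{s}_{2,\infty}(\mathbb{R}^{n})$ iff $w\in\mathcal{S}'(\mathbb{R}^{n})$, $\widehat{w}$ is locally integrable, and $\sup_{j\geq0}2^{js}\|\widehat{w}\|_{L_{2}(\{2^{j-1}\leq\langle\xi\rangle\leq 2^{j+1}\})}<\infty$, with the supremum an equivalent norm (see \cite[Section~2.3.1]{Triebel95}). On the other hand, $\|w\|^{2}_{\alpha,\mathbb{R}^{n}}=\int_{\mathbb{R}^{n}}\alpha^{2}(\langle\xi\rangle)|\widehat{w}(\xi)|^{2}\,d\xi=\sum_{j\geq0}\int_{\{2^{j}\leq\langle\xi\rangle<2^{j+1}\}}\alpha^{2}(\langle\xi\rangle)|\widehat{w}(\xi)|^{2}\,d\xi$, and on each dyadic annulus $\alpha^{2}(\langle\xi\rangle)\asymp\alpha^{2}(2^{j})$ by the O-regular variation property \eqref{f3.1}. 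So the continuous embedding \eqref{embed} is equivalent to the estimate $\sum_{j\geq0}\alpha^{2}(2^{j})a_{j}^{2}\leq C\sup_{j\geq0}2^{2js}a_{j}^{2}$ holding for all nonnegative sequences $(a_{j})$ (where $a_{j}^{2}$ plays the role of the annular mass of $|\widehat{w}|^{2}$), which in turn is equivalent to $\sum_{j\geq0}\alpha^{2}(2^{j})2^{-2js}<\infty$.

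Next I would show $\sum_{j\geq0}\alpha^{2}(2^{j})2^{-2js}<\infty\Longleftrightarrow\int_{1}^{\infty}\alpha^{2}(t)t^{-2s-1}\,dt<\infty$. This is a standard dyadic/integral comparison: on $[2^{j},2^{j+1}]$ we have $\alpha^{2}(t)t^{-2s-1}\asymp\alpha^{2}(2^{j})2^{-2js}2^{-j}$ (again using \eqref{f3.1} for $\alpha$ and the trivial monotonicity of $t^{-2s-1}$ up to constants on a fixed-ratio interval), and $\int_{2^{j}}^{2^{j+1}}2^{-j}\,dt\asymp1$, so the series and the integral are comparable term by term. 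Hence \eqref{int-cond-8.1} is equivalent to $\sum_{j}\alpha^{2}(2^{j})2^{-2js}<\infty$, closing the chain.

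For the forward direction (embedding $\Rightarrow$ integral) one must produce, when the integral diverges, a distribution $w\in B^{s}_{2,\infty}(\mathbb{R}^{n})\setminus H^{\alpha}(\mathbb{R}^{n})$: choose $\widehat{w}$ supported so that its mass on the $j$-th annulus equals $c_{j}$ with $\sum_{j}2^{2js}c_{j}<\infty$ (giving $w\in B^{s}_{2,\infty}$, in fact with finite $\sup_{j}2^{js}(\cdot)$) but $\sum_{j}\alpha^{2}(2^{j})c_{j}=\infty$; such $c_{j}$ exist exactly because $\sum_{j}\alpha^{2}(2^{j})2^{-2js}=\infty$, by a routine gliding-hump argument. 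The main obstacle — really the only non-bookkeeping point — is making the reduction to the sequence inequality fully rigorous: one must check that the characterization of $B^{s}_{2,\infty}$ in terms of annular $L_{2}$-masses is genuinely two-sided (both the embedding constant and the counterexample can be realized with functions whose Fourier transforms are essentially indicator-type on dyadic annuli), and that the constants from \eqref{f3.1} applied to $\alpha$ are uniform in $j$, which they are since $b$ and $c$ in \eqref{f3.1} do not depend on $t$. Everything else is the elementary dyadic-to-integral comparison sketched above.
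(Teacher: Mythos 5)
Your overall strategy coincides with the paper's: both proofs reduce the embedding to comparing $\sum_k\alpha^2(2^k)m_k$ with $\sup_k 4^{sk}m_k$ for the annular masses $m_k=\int_{Q_k}|\widehat w(\xi)|^2d\xi$, use the uniform equivalence $\alpha(\langle\xi\rangle)\asymp\alpha(2^k)$ on dyadic annuli coming from \eqref{f3.1}, and convert the series $\sum_k\alpha^2(2^k)4^{-sk}$ into the integral \eqref{int-cond-8.1} by the same dyadic-to-integral comparison for $\mathrm{OR}$ functions. The only structural difference is that the paper first proves the statement for $s>0$ (where $B^{s}_{2,\infty}(\mathbb{R}^n)\subset L_2(\mathbb{R}^n)$ and the annular characterization is quoted from Triebel) and then transfers it to $s\le0$ via the lifting isomorphism $w\mapsto\mathcal{F}^{-1}[\langle\xi\rangle^{-\lambda}\widehat w(\xi)]$, whereas you invoke the Littlewood--Paley characterization for all real $s$ at once; for $p=2$ that is legitimate, though you should make explicit the passage from smooth dyadic cut-offs to sharp annular masses.

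There is, however, one concrete error in your necessity argument. You ask for masses $c_j$ with $\sum_j 2^{2js}c_j<\infty$ but $\sum_j\alpha^2(2^j)c_j=\infty$ and assert that such $c_j$ exist ``exactly because'' $\sum_j\alpha^2(2^j)2^{-2js}=\infty$. That is false: if $b_j:=\alpha^2(2^j)2^{-2js}$ is bounded --- for example $\alpha(t)=t^s$, for which the series does diverge and the embedding does fail --- then $\sum_j\alpha^2(2^j)c_j=\sum_j b_j\,2^{2js}c_j\le(\sup_j b_j)\sum_j 2^{2js}c_j<\infty$ for every admissible sequence, so no gliding hump can succeed. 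The condition you actually need is the bounded one, $\sup_j 2^{2js}c_j<\infty$, which is all that membership in $B^{s}_{2,\infty}$ requires; then the single choice $c_j=2^{-2js}$ (realized in the paper by $v(\xi):=2^{-sk}(\mathrm{mes}\,Q_k)^{-1/2}$ on $Q_k$) gives $\|w\|_{s,\infty,\mathbb{R}^n}=1$ while $\|w\|_{\alpha,\mathbb{R}^n}^2\asymp\sum_j b_j=\infty$. Note that your own (correct) equivalence of the embedding with the sequence inequality $\sum_j\alpha^2(2^j)a_j^2\le C\sup_j2^{2js}a_j^2$ already yields necessity by testing with $a_j^2=2^{-2js}$, so the extra counterexample paragraph is both redundant and, as stated, wrong; once it is replaced by the bounded-sequence test, the proof is complete.
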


This proposition is implicitly contained in Gol'dman's result \cite[Chapter~1, Theorem~2]{Goldman84}. We will give a proof of this proposition for the reader's convenience.

\begin{proof}[Proof of Proposition~$\ref{8.1}$] First we will treat the $s>0$ case and then reduce the $s\leq0$ case to the previous one. Put $Q_{0}:=\{\xi\in\mathbb{R}^{n}:|\xi|\leq1\}$ and
$Q_{k}:=\{\xi\in\mathbb{R}^{n}:2^{k-1}<|\xi|\leq2^{k}\}$ whenever $1\leq k\in\mathbb{Z}$. Let $s>0$; then the Nikolskii space $B^{s}_{2,\infty}(\mathbb{R}^{n})$ consists of all functions $w\in L_{2}(\mathbb{R}^{n})$ such that
\begin{equation*}
\|w\|_{s,\infty,\mathbb{R}^{n}}^{2}:=\sup_{0\leq k\in\mathbb{Z}}4^{sk}\int\limits_{Q_{k}}|\widehat{w}(\xi)|^{2}d\xi
<\infty,
\end{equation*}
with the norm in this space being equivalent to $\|\cdot\|_{s,\infty,\mathbb{R}^{n}}$; see, e.g., \cite[Lemma 2.11.2]{Triebel95}.

Assume that condition \eqref{int-cond-8.1} is satisfied. Given $w\in B^{s}_{2,\infty}(\mathbb{R}^{n})$, we have
\begin{equation}\label{norms-inequality}
\begin{aligned}
\|w\|_{\alpha,\mathbb{R}^{n}}^{2}&=\sum_{k=0}^{\infty}\,
\int\limits_{Q_k}\alpha^{2}(\langle\xi\rangle)\,|\widehat{w}(\xi)|^{2}d\xi
\asymp\sum_{k=0}^{\infty}\alpha^{2}(2^{k})
\int\limits_{Q_k}|\widehat{w}(\xi)|^{2}d\xi\\
&\leq\biggl(\sum_{k=0}^{\infty}
\frac{\alpha^{2}(2^{k})}{4^{sk}}\biggr)
\sup_{0\leq k\in\mathbb{Z}}4^{sk}
\int\limits_{Q_k}|\widehat{w}(\xi)|^{2}d\xi
=c\,\|w\|_{s,\infty,\mathbb{R}^{n}}^{2}<\infty.
\end{aligned}
\end{equation}
Here, the symbol "$\asymp$" means the equivalence of norms squared, this equivalence being true by \eqref{f3.1} in the $b=2$ case. Besides,
\begin{equation}\label{equivalence}
c:=\sum_{k=0}^{\infty}\frac{\alpha^{2}(2^{k})}{4^{sk}}<\infty\quad
\Longleftrightarrow\quad\eqref{int-cond-8.1}
\end{equation}
because the function $\alpha^{2}(t)\,t^{-2s}$ of $t\geq1$ belongs to $\mathrm{OR}$. Indeed, if $\omega\in\mathrm{OR}$, then
\begin{align*}
\int\limits_{1}^{\infty}\frac{\omega(t)}{t}dt&=
\sum_{k=0}^{\infty}\int\limits_{2^{k}}^{2^{k+1}}\frac{\omega(t)}{t}dt=
\sum_{k=0}^{\infty}\int\limits_{1}^{2}\frac{\omega(2^k\tau)}{\tau}d\tau\\
&=\sum_{k=0}^{\infty}\omega(2^k)
\int\limits_{1}^{2}\frac{\omega(2^k\tau)}{\omega(2^k)}\frac{d\tau}{\tau}
\asymp\sum_{k=0}^{\infty}\omega(2^k)\int\limits_{1}^{2}\frac{d\tau}{\tau}.
\end{align*}
This implies that
\begin{equation}\label{equivalence-gen}
\int\limits_{1}^{\infty}\frac{\omega(t)}{t}dt<\infty\quad
\Longleftrightarrow\quad\sum_{k=0}^{\infty}\omega(2^k)<\infty
\end{equation}
for every $\omega\in\mathrm{OR}$. Now \eqref{equivalence-gen} written for $\omega(t)\equiv\alpha^{2}(t)\,t^{-2s}$ is \eqref{equivalence}. Thus, it follows from \eqref{norms-inequality} that condition \eqref{int-cond-8.1} implies the continuous embedding \eqref{embed}.

Let us prove the inverse implication. We define a function $v\in L_{2}(\mathbb{R}^{n})$ as follows: $v(\xi):=2^{-sk}(\mathrm{mes}\,Q_{k})^{-1/2}$ if $\xi\in Q_{k}$ for some integer $k\geq0$. The function $w:=\mathcal{F}^{-1}v_{m}$ belongs to $B^{s}_{2,\infty}(\mathbb{R}^{n})$, and $\|w\|_{s,\infty,\mathbb{R}^{n}}=1$, where $\mathcal{F}^{-1}$ is the inverse Fourier transform. Assume that the continuous embedding \eqref{embed} holds true. In view of \eqref{norms-inequality}, we have
\begin{align*}
\sum_{k=0}^{\infty}\frac{\alpha^{2}(2^{k})}{4^{sk}}=
\sum_{k=0}^{\infty}\alpha^{2}(2^{k})
\int\limits_{Q_k}|\widehat{w}(\xi)|^{2}d\xi
\asymp\|w\|_{\alpha,\mathbb{R}^{n}}^{2}\leq c_{0}^{2}\,\|w\|_{s,\infty,\mathbb{R}^{n}}^{2}<\infty,
\end{align*}
where $c_{0}$ is the norm of the continuous embedding operator \eqref{embed}. Thus, this embedding implies condition \eqref{int-cond-8.1}.

We have proved the equivalence $\eqref{int-cond-8.1}\Leftrightarrow\eqref{embed}$ in the $s>0$ case. The $s\leq0$ case is plainly reduced to the case considered with the help of the fact that the mapping $w\mapsto\mathcal{F}^{-1}[\langle\xi\rangle^{-\lambda}\widehat{w}(\xi)]$ sets topological isomorphisms
$H^{\alpha}(\mathbb{R}^{n})\leftrightarrow H^{\alpha\varrho^{\lambda}}(\mathbb{R}^{n})$ and $B^{s}_{2,\infty}(\mathbb{R}^{n})\leftrightarrow B^{s+\lambda}_{2,\infty}(\mathbb{R}^{n})$ for arbitrary $s,\lambda\in\mathbb{R}$. The first isomorphism is evident; the second is proved, e.g., in \cite[Theorem 2.3.4]{Triebel95}.
\end{proof}

\begin{remark}\label{rem8.3}
It is well known \cite[Theorem 2.3.2(c)]{Triebel95} that
\begin{equation*}
B^{s}_{2,\infty}(\mathbb{R}^n)\subset H^{s-}(\mathbb{R}^n):=\bigcap_{r<s}H^{r}(\mathbb{R}^n).
\end{equation*}
Proposition \ref{8.1} can be seen as a refinement of this result with the help of the extended Sobolev scale. Thus, e.g., the function $\alpha(t):=t^{s}(1+\log t)^{-\varepsilon-1/2}$ of $t\geq1$ belongs to $\mathrm{OR}$ and satisfies \eqref{int-cond-8.1} for every $\varepsilon>0$, with the space $H^{\alpha}(\mathbb{R}^n)$ being narrower than $H^{s-}(\mathbb{R}^n)$.
\end{remark}

As an immediate consequence of the embedding \eqref{embed} and results in  Section~\ref{sec4}, we obtain \textit{a priori} estimates for solutions to elliptic problems with boundary data in $B_{2,\infty}^s(\Gamma)$. For simplicity of presentation, we discuss a special situation, the formulation in more general settings being obvious. Let $A=A(x,D)=\sum_{|\mu|\le 2} a_\mu(x)D^\mu$ be a properly elliptic second-order PDO on $\overline{\Omega}$, with all $ a_\mu\in C^{\infty}(\overline{\Omega})$. We consider the Dirichlet boundary-value problem
\begin{equation}\label{8-2}
Au=f\;\;\text{ in }\Omega,\quad\gamma_{0}u=g\quad\text{ on }\Gamma,
\end{equation}
where $\gamma_0 u := u\!\upharpoonright\!\Gamma$ denotes the trace of $u$ on the boundary. This is a simple but important example of a regular elliptic problem in~$\Omega$.

Let $\mathrm{OR}_{0}$ denote the set of all $\alpha\in\mathrm{OR}$ such that $\sigma_{0}(\alpha)=\sigma_{1}(\alpha)=0$. In view of Remark~\ref{rem8.3}(a), we restrict ourselves to the case where $\alpha(t)\equiv t^{s}\alpha_{0}(t)$ for some $s\in\mathbb{R}$ and $\alpha_{0}\in\mathrm{OR}_{0}$.

\begin{theorem}\label{8.4}
Assume that a distribution $u\in\mathcal{S}'(\Omega)$ is a generalized solution to the boundary-value problem \eqref{8-2} whose right-hand sides satisfy the conditions $f\in H^\lambda(\Omega)$ and $g\in B_{2,\infty}^s(\Gamma)$ for some numbers $\lambda>-\frac12$ and $s<0$. Then, for every function parameter $\alpha(t)\equiv t^{s+1/2}\alpha_{0}(t)$ such that $\alpha_{0}\in\mathrm{OR}_{0}$ and
\begin{equation}\label{8-3}
\int_1^\infty\alpha_{0}^2(t)\,\frac{dt}{t}<\infty,
\end{equation}
we have $u\in H^\alpha(\Omega)$ and
\begin{equation*}
\|u\|_{\alpha,\Omega}\leq c\,\bigl(\|f\|_{\lambda,\Omega}+ \|g\|_{s,\infty,\Gamma}+\|u\|_{\alpha\rho^{-1},\Omega}\bigr).
\end{equation*}
Here, $\|\cdot\|_{s,\infty,\Gamma}$ denotes the norm in $B^{s}_{2,\infty}(\Gamma)$, and the number $c>0$ does not depend on $u$, $f$, and $g$.
\end{theorem}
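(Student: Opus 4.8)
The plan is to reduce Theorem~\ref{8.4} to the general results of Section~\ref{sec4} (specifically Theorems~\ref{th4.7} and~\ref{th4.13}) via the embedding of Proposition~\ref{8.1}. First I would set up the correspondence with the abstract framework: here $2q=2$, there is a single boundary operator $B_1=\gamma_0$ of order $m_1=0$, so the target boundary space in \eqref{f15} is $H^{\varphi\rho^{2q-m_1-1/2}}(\Gamma)=H^{\varphi\rho^{3/2}}(\Gamma)$, and the hypothesis $\lambda>-\tfrac12$ matches the condition needed on $\eta$. I would take $\varphi(t):=t^{s-1/2}\alpha_0(t)$ (so that $\varphi\rho^{2q}=\varphi\rho^2$ has order index $\alpha(t)\equiv t^{s+3/2}\alpha_0(t)$ — wait, I must recheck: the conclusion claims $u\in H^\alpha(\Omega)$ with $\alpha(t)\equiv t^{s+1/2}\alpha_0(t)$, so in fact $\varphi\rho^{2q}=\varphi\rho^2$ should equal $\alpha$, forcing $\varphi(t)=t^{s-3/2}\alpha_0(t)$; one checks then $\varphi\rho^{3/2}=t^{s}\alpha_0$, which is exactly the order index matching the boundary datum). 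Thus the correct choice is $\varphi(t):=t^{s-3/2}\alpha_0(t)$, which lies in $\mathrm{OR}$ with $\sigma_0(\varphi)=\sigma_1(\varphi)=s-3/2<-1/2$ since $s<0$. The role of $\eta$ is played by (an O-regularly varying function dominated by) $t^\lambda$; concretely one picks $s_0<s-3/2$, $s_1$ with $-1/2>s_1>s-3/2$, and $\eta(t):=t^\lambda$ as in the $\sigma_1(\varphi)<-1/2$ branch of \eqref{f4.5}, so that $H^\eta(\Omega)=H^\lambda(\Omega)$ and Theorem~\ref{th1} applies.

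Next I would verify that the hypotheses of Theorems~\ref{th4.7} and~\ref{th4.13} are met. The condition $f\in H^\lambda(\Omega)$ with $\lambda>-1/2$ gives $f\in H^\eta_{\mathrm{loc}}(\Omega_0,\Gamma_0)\cap H^{-1/2+}(\Omega)$ trivially (taking $\Omega_0=\Omega$, $\Gamma_0=\Gamma$, so all the ``loc'' statements are global). The key step is the boundary datum: by Proposition~\ref{8.1} applied on $\mathbb{R}^{n-1}$ with order index $\varphi\rho^{3/2}$, i.e. with exponent $s$ (since $(\varphi\rho^{3/2})(t)=t^s\alpha_0(t)$), the continuous embedding $B^s_{2,\infty}(\mathbb{R}^{n-1})\hookrightarrow H^{\varphi\rho^{3/2}}(\mathbb{R}^{n-1})$ holds precisely when $\int_1^\infty (t^s\alpha_0(t))^2 t^{-2s-1}\,dt=\int_1^\infty \alpha_0^2(t)\,t^{-1}\,dt<\infty$, which is exactly the assumed condition \eqref{8-3}. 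Passing to $\Gamma$ through a partition of unity and local charts (as in the definition of $H^\alpha(\Gamma)$ and the analogous definition of $B^s_{2,\infty}(\Gamma)$), this yields $B^s_{2,\infty}(\Gamma)\hookrightarrow H^{\varphi\rho^{3/2}}(\Gamma)$, so $g\in H^{\varphi\rho^{2q-m_1-1/2}}(\Gamma)$ with the norm estimate $\|g\|_{\varphi\rho^{3/2},\Gamma}\le c\,\|g\|_{s,\infty,\Gamma}$. Hence $u$ is a generalized solution satisfying the hypotheses of Theorem~\ref{th4.7}, and Theorem~\ref{th4.7} gives $u\in H^{\varphi\rho^{2q}}_{\mathrm{loc}}(\Omega,\Gamma)=H^{\alpha}(\Omega)$ (here ``loc'' over the full $\Omega\cup\Gamma$ coincides with the global space, by taking $\chi\equiv1$).

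For the estimate, I would invoke Theorem~\ref{th4.13} with $\chi=\zeta\equiv1$, $\ell=1$ (so $\varphi\rho^{2q-\ell}=\varphi\rho$, whose order index is $\alpha\rho^{-1}$), giving
\begin{equation*}
\|u\|_{\alpha,\Omega}=\|u\|_{\varphi\rho^{2q},\Omega}\le c\bigl(\|(f,g)\|_{\eta,\varphi,\Omega,\Gamma}+\|u\|_{\varphi\rho^{2q-\ell},\Omega}\bigr)=c\bigl(\|f\|_{\lambda,\Omega}+\|g\|_{\varphi\rho^{3/2},\Gamma}+\|u\|_{\alpha\rho^{-1},\Omega}\bigr);
\end{equation*}
combining with $\|g\|_{\varphi\rho^{3/2},\Gamma}\le c\|g\|_{s,\infty,\Gamma}$ yields the claimed inequality. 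The main obstacle, and the only genuinely non-formal point, is the localization of Proposition~\ref{8.1} from $\mathbb{R}^{n-1}$ to the manifold $\Gamma$: one must check that the norm equivalences defining $H^\alpha(\Gamma)$ and $B^s_{2,\infty}(\Gamma)$ via charts and partitions of unity are compatible, so that the $\mathbb{R}^{n-1}$-embedding transfers with a uniform constant; this is standard (both are defined chart-wise and the class $\mathrm{OR}$ is stable under the required manipulations), and for $B^s_{2,\infty}(\Gamma)$ with $s<0$ one uses the lift isomorphism from the proof of Proposition~\ref{8.1} to reduce to $s>0$ if a direct chart-wise definition is not taken as given. A minor bookkeeping point is to double-check the exponent arithmetic $2q-m_1-1/2=2-0-1/2=3/2$ and $\varphi(t)=t^{s-3/2}\alpha_0(t)\Rightarrow (\varphi\rho^2)(t)=t^{s+1/2}\alpha_0(t)=\alpha(t)$, which is what makes the conclusion $u\in H^\alpha(\Omega)$ come out as stated.
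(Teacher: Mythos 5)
Your proof is correct and follows essentially the same route as the paper: choose $\varphi(t)\equiv t^{-2}\alpha(t)\equiv t^{s-3/2}\alpha_{0}(t)$ and $\eta(t)\equiv t^{\lambda}$, check via Proposition~\ref{8.1} that condition \eqref{8-3} is exactly the integral condition for $B^{s}_{2,\infty}\hookrightarrow H^{\varphi\rho^{3/2}}=H^{\alpha\rho^{-1/2}}$, transfer this to $\Gamma$ by local charts, and then apply the regularity and \textit{a priori} estimate theorems of Section~\ref{sec4}. The only (immaterial) difference is that you invoke the local Theorems~\ref{th4.7} and~\ref{th4.13} with $\Omega_0=\Omega$, $\Gamma_0=\Gamma$, $\chi=\zeta\equiv1$, whereas the paper cites the global Theorems~\ref{th4.6} and~\ref{th4.12} directly; your exponent bookkeeping, after the self-correction, matches the paper's.
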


\begin{proof}
According to condition~\eqref{8-3} and Proposition~\ref{8.1}, we have the continuous embedding $B^{s}_{2,\infty}(\mathbb{R}^{n-1})\hookrightarrow H^{\alpha\varrho^{-1/2}}(\mathbb{R}^{n-1})$. With the help of local charts on $\Gamma$, we immediately obtain the continuous embedding $B^{s}_{2,\infty}(\Gamma)\hookrightarrow H^{\alpha\varrho^{-1/2}}(\Gamma)$. Now the statement follows directly from Theorems \ref{th4.6} and \ref{th4.12}, in which $\varphi(t)\equiv t^{-2}\alpha(t)\equiv t^{s-3/2}\alpha_{0}(t)$, $\sigma_{0}(\varphi)=\sigma_{1}(\varphi)=s-3/2<-3/2$, and $\eta(t)\equiv t^{\lambda}$.
\end{proof}

The Nikolskii spaces $B^{s}_{2,\infty}(\mathbb{R}^n)$ and $B^{s}_{2,\infty}(\Gamma)$ of order $s<0$ appear in the theory of white noise. We recall the basic definitions. Let $(\widetilde\Omega,\mathcal F,\mathbb P)$ be a probability space, and let $G\in\{\Gamma,\mathbb{R}^n\}$. Then a (spatial) white noise on $G$ is a random variable $\xi\colon \widetilde\Omega\to\mathcal{D}'(G)$ such that for all test functions $v_1,v_2\in\mathcal{D}(G)$ we have
\begin{equation}\label{8-4}
\mathbb{E}[\xi(v_1)\overline{\xi(v_2)}]=C\,(v_1,v_2)_{G}
\end{equation}
with some constant $C>0$. Here, $\mathcal{D}'(G)$ is the topological space of all distributions on $G$, with $\mathcal{D}(G)$ being $C_{0}^{\infty}(\mathbb{R}^n)$ or $C^{\infty}(\Gamma)$. Besides,
$(\cdot,\cdot)_{G}$ denotes the inner product in $L_{2}(G)$, and $\mathbb{E}$ stands for the expectation with respect to $\mathbb P$. A white noise $\xi$ on $G$ is called Gaussian if the scalar random variables $\{\xi(v):v\in\mathcal{D}(G)\}$ are jointly Gaussian with mean zero and with covariance being given by~\eqref{8-4}.

Recently, the Besov space regularity of white noise was studied, e.g., in \cite{FageotFallahUnser17,Veraar11}. For a Gaussian white noise $\xi\colon\widetilde\Omega\to\mathcal{D}'(\mathbb R^n)$, it was shown in \cite[Corollary~3]{FageotFallahUnser17} that $\mathbb P$-almost surely $\xi$ locally belongs to the Besov space $B_{2,r}^s(\mathbb R^n)$ for all $r\in[1,\infty]$ and $s<-n/2$. In \cite{Veraar11}, white noise on the $n$-dimensional torus was studied. It was shown in \cite[Theorem~3.4]{Veraar11} that for
a Gaussian white noise $\xi\colon\widetilde\Omega\to\mathcal{D}'(\mathbb{T}^n)$ we have
$\mathbb P (\xi\in B_{2,\infty}^{-n/2}(\mathbb T^n))=1$.
Here, the upper index is sharp in the sense that for all $s>-n/2$ we have $\mathbb P(\xi\in B_{2,\infty}^s(\mathbb T^n))=0$. Based on these results, one might conjecture that for every Gaussian white noise $\xi$ on an $n$-dimensional closed manifold $M$, we have $\mathbb P(\xi\in B_{2,\infty}^{-n/2}(M))=1$, but this seems to be an open question.

Combining the above regularity of Gaussian white noise with Theorem~\ref{8.4}, we obtain \textit{a priori} estimates for solutions to elliptic problems with boundary noise. As a simple example, we state the result for the Dirichlet Laplacian.

\begin{corollary}\label{8.5}
Consider the boundary-value problem
\begin{equation}\label{8-5}
\Delta u=f\quad\mbox{in}\;\,\Omega,\qquad\gamma_0u=\xi\quad\mbox{on}\;\,\Gamma.
\end{equation}
Here, $\Omega:=\{x\in\mathbb R^2: |x|<1\}$, whereas $\xi$ is a Gaussian white noise on $\Gamma$. Let $f\in H^\lambda(\Omega)$ for some number  $\lambda>-1/2$. Then, for $\mathbb P$-almost all $\omega\in\widetilde \Omega$, there exists a unique pathwise solution $u(\omega,\cdot)$ of \eqref{8-5}, which belongs to $H^\alpha(\Omega)$ for every $\alpha\in \mathrm{OR}_{0}$ subject to \eqref{8-3}. Moreover, for such $\alpha$, the estimate
\begin{equation*}
\|u(\omega,\cdot)\|_{\alpha,\Omega}\leq c_{\alpha}\big(\|f\|_{\lambda,\Omega}+ \|\xi(\omega)\|_{-1/2,\infty,\Gamma}\big)
\end{equation*}
holds $\mathbb P$-almost surely with a number $c_\alpha>0$ that does not depend on $f$, $\xi$, and $\omega$ (but may depend on $\alpha$).
\end{corollary}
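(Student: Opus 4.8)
The plan is to derive Corollary~\ref{8.5} by specializing Theorem~\ref{8.4} to the two-dimensional Dirichlet Laplacian and combining it with the known Besov regularity of Gaussian white noise on a one-dimensional manifold. First I would fix the probabilistic input: since $\Gamma$ is a $1$-dimensional closed manifold, the white noise $\xi$ on $\Gamma$ belongs $\mathbb{P}$-almost surely to $B^{-1/2}_{2,\infty}(\Gamma)$ — this is the manifold analogue (for $n=1$, so that $-n/2=-1/2$) of the torus result \cite[Theorem~3.4]{Veraar11}, transported to $\Gamma$ via local charts exactly as in the proof of the embedding $B^{s}_{2,\infty}(\Gamma)\hookrightarrow H^{\alpha\varrho^{-1/2}}(\Gamma)$ used in the proof of Theorem~\ref{8.4}. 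Thus there is a set $\widetilde\Omega_{0}\subset\widetilde\Omega$ of full $\mathbb{P}$-measure such that $\xi(\omega,\cdot)\in B^{-1/2}_{2,\infty}(\Gamma)$ for every $\omega\in\widetilde\Omega_{0}$, with finite norm $\|\xi(\omega,\cdot)\|_{-1/2,\infty,\Gamma}$.

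Next I would apply Theorem~\ref{8.4} pathwise. Fix $\omega\in\widetilde\Omega_{0}$ and set $g:=\xi(\omega,\cdot)\in B^{s}_{2,\infty}(\Gamma)$ with $s=-1/2<0$; the right-hand side $f\in H^{\lambda}(\Omega)$ with $\lambda>-1/2$ is given. For any $\alpha_{0}\in\mathrm{OR}_{0}$ satisfying \eqref{8-3}, put $\alpha(t):=t^{s+1/2}\alpha_{0}(t)=\alpha_{0}(t)$; note that in this case $\alpha=\alpha_{0}\in\mathrm{OR}_{0}$, which is exactly the class of $\alpha$'s named in the corollary, so the bookkeeping $\alpha(t)\equiv t^{s+1/2}\alpha_{0}(t)$ of Theorem~\ref{8.4} collapses to $\alpha\equiv\alpha_{0}$. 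Any generalized solution $u(\omega,\cdot)\in\mathcal{S}'(\Omega)$ of \eqref{8-5} then lies in $H^{\alpha}(\Omega)$ and obeys
\begin{equation*}
\|u(\omega,\cdot)\|_{\alpha,\Omega}\leq c\,\bigl(\|f\|_{\lambda,\Omega}+\|\xi(\omega,\cdot)\|_{-1/2,\infty,\Gamma}+\|u(\omega,\cdot)\|_{\alpha\rho^{-1},\Omega}\bigr),
\end{equation*}
with $c$ independent of $u$, $f$, $g$ (and hence of $\omega$).

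The remaining point is to remove the last summand $\|u(\omega,\cdot)\|_{\alpha\rho^{-1},\Omega}$ and to get existence and uniqueness of the pathwise solution. Here I would invoke that $\Delta$ on the unit disk $\Omega\subset\mathbb{R}^{2}$ with Dirichlet boundary condition $\gamma_{0}u=g$ is uniquely solvable: the spaces $N$ and $N^{+}$ associated with the Dirichlet problem for the Laplacian are both trivial (the only harmonic function vanishing on $\Gamma$ is $0$, and likewise for the formally adjoint Dirichlet problem). Consequently, in the notation of Theorem~\ref{th1} with $\varphi(t)\equiv t^{-2}\alpha(t)\equiv t^{-3/2}\alpha_{0}(t)$ (so $\sigma_{0}(\varphi)=\sigma_{1}(\varphi)=-3/2<-1/2$) and $\eta(t)\equiv t^{\lambda}$, the operator \eqref{f15} is an \emph{isomorphism} $H^{\varphi\varrho^{2q}}_{A,\eta}(\Omega)\leftrightarrow\mathcal{H}^{\eta,\varphi}(\Omega,\Gamma)$. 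This gives a unique $u(\omega,\cdot)\in H^{\varphi\varrho^{2q}}_{A,\eta}(\Omega)\subset H^{\alpha}(\Omega)$ solving \eqref{8-5}, and, by the note immediately after Theorem~\ref{th4.12} (the Banach inverse-operator theorem applies precisely because $N=\{0\}$), the term $\|u(\omega,\cdot)\|_{\alpha\rho^{-1},\Omega}$ may be dropped from the estimate, yielding
\begin{equation*}
\|u(\omega,\cdot)\|_{\alpha,\Omega}\leq c_{\alpha}\bigl(\|f\|_{\lambda,\Omega}+\|\xi(\omega,\cdot)\|_{-1/2,\infty,\Gamma}\bigr)
\end{equation*}
for all $\omega\in\widetilde\Omega_{0}$, with $c_{\alpha}>0$ depending only on $\alpha$ (through $\varphi,\eta$), not on $f$, $\xi$, $\omega$. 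Finally I would note that measurability of $\omega\mapsto u(\omega,\cdot)$ as a map into $H^{\alpha}(\Omega)$ is inherited from that of $\omega\mapsto\xi(\omega,\cdot)$ through the bounded solution operator, so ``pathwise solution'' is meaningful.

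The main obstacle I anticipate is the very first step: one must be sure that Gaussian white noise on the circle $\Gamma$ lies almost surely in $B^{-1/2}_{2,\infty}(\Gamma)$. The cited results \cite{FageotFallahUnser17,Veraar11} are stated for $\mathbb{R}^{n}$ and for the torus $\mathbb{T}^{n}$; as the paper itself remarks, the general closed-manifold statement ``seems to be an open question.'' However, for the specific case at hand $\Gamma$ is diffeomorphic to $\mathbb{T}^{1}$, and white noise is characterized by \eqref{8-4} (covariance equal to the $L_{2}(\Gamma)$ inner product), so pulling back to $\mathbb{T}^{1}$ via a smooth diffeomorphism and using the localization/diffeomorphism invariance of $B^{-1/2}_{2,\infty}$ (as already exploited in the proof of Proposition~\ref{8.1} and Theorem~\ref{8.4}) reduces it to \cite[Theorem~3.4]{Veraar11}. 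The rest of the argument is bookkeeping: tracking how the indices $s=-1/2$, $\lambda$, $\alpha=\alpha_{0}$, $\varphi=\varrho^{-3/2}\alpha_{0}$ feed into Theorems~\ref{th1}, \ref{th4.6}, \ref{th4.12}, and checking $N=N^{+}=\{0\}$ for the Dirichlet Laplacian on the disk.
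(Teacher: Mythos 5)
Your proposal is correct and follows essentially the same route as the paper, which derives the corollary directly from Theorem~\ref{8.4} with $s=-1/2$ together with the almost-sure inclusion $\xi\in B^{-1/2}_{2,\infty}(\Gamma)$, and removes the lower-order term and obtains unique solvability from $\dim N=\dim N^{+}=0$ for the Dirichlet Laplacian. Your additional care about transporting Veraar's torus result to the circle and about dropping $\|u\|_{\alpha\rho^{-1},\Omega}$ via the remark after Theorem~\ref{th4.12} just makes explicit what the paper leaves implicit.
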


\begin{proof}
This is an immediate consequence of Theorem~\ref{8.4} and the fact that $\xi\in B_{2,\infty}^{-1/2}(\Gamma)$ holds $\mathbb P$-almost surely. Note that the unique solvability holds as  $\dim N = \dim N^+ = \{0\}$ for the regular elliptic problem \eqref{8-5}.
\end{proof}

\begin{remark}\label{8.6}
In the last corollary, we have shown that $u\in H^\alpha(\Omega)$ by the embedding result from Proposition~\ref{8.1} and the Nikolskii regularity of white noise. It would be interesting to analyse the regularity of Gaussian white noise (or, more generally,  L\'{e}vy white noise) with respect to the extended Sobolev scale. In particular, this  would allow a direct application of Theorems \ref{th4.6} and \ref{th4.12} for boundary noise.
\end{remark}

\end{document}